\theoremstyle{plain}
\newtheorem{theorem}{Theorem}[section]
\newtheorem{lemma}[theorem]{Lemma}
\newtheorem{proposition}[theorem]{Proposition}
\newtheorem{corollary}[theorem]{Corollary}
\numberwithin{equation}{section}
\theoremstyle{definition}
\newtheorem{definition}[theorem]{Definition}
\newtheorem{example}[theorem]{Example}
\newtheorem{remark}[theorem]{Remark}
\DeclareMathOperator{\reg}{reg}
\DeclareMathOperator{\Mod}{-Mod}
\DeclareMathOperator{\Hom}{Hom}
\DeclareMathOperator{\Tor}{Tor}
\DeclareMathOperator{\hd}{hd}
\DeclareMathOperator{\gd}{gd}
\DeclareMathOperator{\pd}{pd}
\DeclareMathOperator{\td}{td}
\DeclareMathOperator{\fdim}{findim}
\newcommand{\C}{{\mathscr{C}}}
\newcommand{\N}{{\mathbb{N}}}
\newcommand{\mJ}{{\mathfrak{J}}}
\newcommand{\Nm}{{\mathbb{N}^m}}
\newcommand{\bfl}{{\mathbf{l}}}
\newcommand{\bft}{{\mathbf{t}}}
\newcommand{\bfn}{{\mathbf{n}}}
\newcommand{\bff}{{\mathbf{f}}}
\newcommand{\bfg}{{\mathbf{g}}}
\newcommand{\bfh}{{\mathbf{h}}}
\newcommand{\bfo}{{\mathbf{1}}}
\newcommand{\bfs}{{\boldsymbol{\Sigma}}}
\newcommand{\bfd}{{\mathbf{D}}}
\newcommand{\bfk}{{\mathbf{K}}}
\newcommand{\FI}{{\mathrm{FI}}}
\newcommand{\FIM}{{\mathrm{FIM}}}
\newcommand{\OI}{{\mathrm{OI}}}
\newcommand{\mm}{{\mathfrak{m}}}
\title{$\FI^m$-modules over Noetherian rings}
\author{Liping Li}
\address{Key Laboratory of Performance Computing and Stochastic Information Processing (Ministry of Education), College of Mathematics and Computer Science, Hunan Normal University; Changsha, Hunan 410081, China.}
\email{lipingli@hunnu.edu.cn}
\thanks{The first author was supported by the National Natural Science Foundation of China 11541002, the Construct Program of the Key Discipline in Hunan Province, and the Start-Up Funds of Hunan Normal University 830122-0037. The second author was supported by China NSF 11601452.}
\author{Nina Yu}
\address{School of Mathematical Sciences, Xiamen University, Xiamen, Fujian, 361005, China.}
\email{ninayu@xmu.edu.cn.}
\begin{document}

\begin{abstract}
In this paper we study representation theory of the category $\FI^m$ introduced in \cite{Gad1, Gad2} which is a product of copies of the category $\FI$, and show that quite a few interesting representational and homological properties of $\FI$ can be generalized to $\FI^m$ in a natural way. In particular, we prove the representation stability property of finitely generated $\FI^m$-modules over fields of characteristic 0.
\end{abstract}

\maketitle

\section{Introduction}

\subsection{Motivation}

Representation theory of the category $\FI$ and its plentiful applications in various fields such as algebra, algebraic topology, algebraic geometry, number theory, rooted from the work of Church, Ellenberg, Farb, and Nagpal in \cite{CEF, CEFN}, have been actively studied. With their initial contributions and the followed explorations described in a series of papers \cite{CE, Gan1, Gan2, GL1, LGO, L1, L2, LR, LY, R1, R2, SS1, SS2}, people now have a satisfactory understanding on representational and homological properties of $\FI$-modules and their relationships with representation stability properties. Furthermore, quite a few combinatorial categories sharing similar structures as $\FI$, including $\FI_G$, $\FI_d$, $\FI^m$, $\FIM$ have been introduced and studied with viewpoints from representation theory, commutative algebra, and combinatorics; see \cite{Gad1, Gad2, SS2, W}.

The main goal of this paper is to investigate representations of $\FI^m$, a generalization of $\FI$ introduced by Gadish in \cite{Gad1, Gad2}. Intuitively, $\FI^m$ is a product of $m$ copies of $\FI$, so it is reasonable to expect that methods to study $\FI$-modules and their outcomes can   extend to $\FI^m$ in a natural way. As the reader will see, this is indeed the case, although the techniques become more complicated and subtle. In particular, we show that $\FI^m$ is locally Noetherian over any commutative Noetherian ring, and homological properties of \emph{relative projective $\FI$-modules} (or \emph{$\sharp$-filtered modules} in literature) still hold for $\FI^m$. As an important application, we show that finitely generated $\FI^m$-modules over fields of characteristic 0 have representation stability.

\subsection{Notation}

Throughout this paper let $m$ be a positive integer. By definition (see \cite{Gad1, Gad2}), objects of the category $\FI^m$ are $m$-tuples of finite sets $\mathbf{T} = (T_1, \, \ldots, \, T_m)$, and morphisms from an object $\mathbf{T}$ to another object $\mathbf{T'}$ are maps $\bff = (f_1, \, \ldots, \, f_m)$ such that each $f_i: T_i \to T_i'$ is injective. It has a skeletal full subcategory $\C$, whose objects are $\bfn = ([n_1], \, \ldots, \, [n_m])$ where $[n_i] = \{1, \, 2, \, \ldots, \, n_i\}$ and by convention $[0] = \emptyset$. When $m = 1$, $\FI^m$ coincides with $\FI$.

Note that objects of $\C$ form a \textit{ranked poset} isomorphic to $\Nm$, where $\mathbb{N}$ is the set of all nonnegative integers. That is, $\bft \leqslant \bfn$ if and only if $t_i \leqslant n_i$ for all $i \in [m]$. To simplify the notation, we identify objects in $\C$ with elements in $\Nm$ and hope that this simplification would not cause too much confusion to the reader. The \textit{degree} of an object $\bfn = ([n_1], \, \ldots, \, [n_m])$ is defined to be $\deg (\bfn) = n_1 + \ldots + n_m$. The degree of a morphism $\bff \in \C (\bfn, \bft)$ is defined to be $\deg(\bft) - \deg(\bfn)$. With this degree function the category $\FI^m$ becomes a \emph{graded category}.

Let $k$ be a unital commutative ring. A \textit{representation} of $\C$, or a $\C$-\emph{module}, is a covariant functor $V$ from $\C$ to $k \Mod$, the category of $k$-modules. It is well known that $\C \Mod$ is an abelian category. Moreover, it has enough projectives. In particular, for $\bfn \in \Nm$, the $k$-linearization of the representable functor $\C(\bfn, -)$ is projective. We call it a \textit{free module}, and denote it $M(\bfn)$. The value of a representation $V$ on an object $\bfn$ is denoted by $V_{\bfn}$.

A representation $V$ of $\C$ is said to be \emph{finitely generated} if there exists a finite subset $S$ of $V$ such that any submodule containing $S$ coincides with $V$; or equivalently, there exists a surjective homomorphism
\begin{equation*}
\bigoplus _{\bfn \in \Nm} M(\bfn) ^{\oplus a_{\bfn}} \to V
\end{equation*}
such that $\sum _{\bfn \in \Nm} a_{\bfn} < \infty$. It is said to be \emph{generated in degrees $\leqslant N$} if in the above surjection one can let $a_{\bfn} = 0$ for all $\bfn$ with $\deg(\bfn) > N$. It is easy to see that $V$ is finitely generated if and only if it is generated in degrees $\leqslant N$ for a certain $N \in \N$ and the values $V_{\bfn}$ with $\deg(\bfn) \leqslant N$ are finitely generated $k$-modules.

The category $\C$ has $m$ distinct \emph{self-embedding functors} $\iota_1, \, \ldots, \, \iota_m$ of degree 1. That is, for each $i \in [m]$, $\iota_i$ is a faithful functor $\C \to \C$ such that
\begin{equation*}
\bfn = (n_1, \, \ldots, \, n_i, \, \ldots, \, n_m) \mapsto (n_1, \, \ldots, \, n_i + 1, \, \ldots, \, n_m) = \bfn + \bfo_i
\end{equation*}
where $\bfo_i = (0, \ldots, 0, \, 1, \, 0, \, \ldots, \, 0)$ and $1$ is on the $i$-th position. They induce $m$ distinct \emph{shift functors} $\Sigma_i: \C \Mod \to \C \Mod$. Let $\bfs$ be the direct sum of these shift functors $\Sigma_i$, $i \in [m]$. There exist natural maps $V \to \Sigma_i V$, and hence a natural map $V^{\oplus m} \to \bfs V$. The \emph{derivative functor} $D$ is defined to be the cokernel functor induced by these natural maps $V^{\oplus m} \to \bfs V$. Therefore, for each $\C$-module $V$, we have an exact sequence $0 \to KV \to V^{\oplus m} \to \bfs V \to DV \to 0$.

\subsection{Noetherianity}

By the ranked poset structure on the set of objects, the category $\C$ has a two-sided ideal of morphisms
\begin{equation*}
\bigsqcup _{\mathbf{0} \leqslant \mathbf{i} < \mathbf{j}} \C(\mathbf{i}, \mathbf{j}).
\end{equation*}
Denote the $k$-linearization of this ideal by $\mm$, which is a two-sided ideal of the \emph{category algebra} $k \C$ (for a definition, see \cite{Webb}). Given a $\C$-module $V$ and a nonnegative integer $s$, the $s$-th \emph{homology group} of $V$ is defined to be
\begin{equation*}
H_s (V) = \Tor _s^{k \C} (k \C / \mm, V).
\end{equation*}
This is a $k \C$-module since $\mm$ is a $(k \C, k \C)$-bimodule. Accordingly, the $s$-th homological degree is
\begin{equation*}
\hd_s(V) = \sup \{ \deg(\bfn) \mid (H_s(V))_{\bfn} \neq 0 \}.
\end{equation*}
By convention, we set $\hd_s(V)$ to be -1 whenever the above set is empty. We call the zeroth homological degree \emph{generating degree}, and denote it by $\gd(V)$.

When $k$ is a commutative Noetherian ring, it is easy to see that a \emph{locally finite} (see Definition \ref{locally finite}) $\C$-module $V$ is finitely presented if and only if $\hd_1(V) < \infty$. Using this homological characterization, we can prove:

\begin{theorem} \label{noetherianity}
The category $\C$ is locally Noetherian over any commutative Noetherian ring $k$. That is, submodules of finitely generated $\C$-modules are still finitely generated.
\end{theorem}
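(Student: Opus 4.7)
The plan is to reduce the statement to showing that every submodule of a free module $M(\bfn)$ is finitely generated, and then to proceed by induction on $\deg(\bfn)$ using the shift and derivative functors introduced above. The reduction is routine: any finitely generated $\C$-module is a quotient of a finite direct sum $P = \bigoplus_{\alpha = 1}^{r} M(\bfn_{\alpha})$; a submodule of $P/K$ pulls back to a submodule of $P$ containing $K$, and an induction on $r$ (peeling off one summand at a time via a short exact sequence) reduces to the case of a single free module.

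The key combinatorial input I would establish first is the direct-sum decomposition
\[
\Sigma_i M(\bfn) \;\cong\; M(\bfn) \,\oplus\, M(\bfn - \bfo_i)^{\oplus n_i}
\]
(with the second summand interpreted as zero when $n_i = 0$), obtained by splitting an injection $[n_i] \hookrightarrow [t_i + 1]$ according to whether $t_i + 1$ lies in its image; both pieces are preserved by the $\C$-action because the shift morphisms send the distinguished new element to the distinguished new element. Under this identification the natural transformation $M(\bfn) \to \Sigma_i M(\bfn)$ is the inclusion of the first summand, whence $K M(\bfn) = 0$ and
\[
D M(\bfn) \;\cong\; \bigoplus_{i=1}^{m} M(\bfn - \bfo_i)^{\oplus n_i},
\]
a finite direct sum of free modules of strictly smaller total degree.

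Now fix a submodule $W \subseteq M(\bfn)$. Applying the exact functors $\bfs$ and $(-)^{\oplus m}$ to the short exact sequence $0 \to W \to M(\bfn) \to M(\bfn)/W \to 0$ and chasing the snake lemma for the natural map $(-)^{\oplus m} \to \bfs$ yields the four-term exact sequence
\[
0 \to K\bigl(M(\bfn)/W\bigr) \to DW \to DM(\bfn) \to D\bigl(M(\bfn)/W\bigr) \to 0.
\]
By the inductive hypothesis every submodule of $DM(\bfn)$ is finitely generated, so the image of $DW$ in $DM(\bfn)$ is finitely generated; together with a bound on $K(M(\bfn)/W)$ this forces $DW$ to be finitely generated. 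From the defining sequence $0 \to KW \to W^{\oplus m} \to \bfs W \to DW \to 0$, combined with the homological characterization recalled just before the theorem (local finiteness plus $\hd_1 < \infty$ equals finite presentation), one bounds $\hd_0(W)$ and concludes that $W$ itself is finitely generated.

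The step I expect to be most delicate is controlling the kernel term $K(M(\bfn)/W)$. In the $\FI$ case ($m=1$) this complication is absent, since the single natural map $V \to \Sigma V$ is injective for every $V$, so $K$ vanishes identically and the four-term sequence collapses; for $m > 1$ the kernel $KV$ is genuinely nonzero in general and must be bounded. I anticipate this requires an auxiliary lemma of the shape $\gd(KV) \leqslant \max\{\gd(V), \hd_1(V)\} + C_m$ for a constant $C_m$ depending only on $m$, proved by a direct analysis of the map $V^{\oplus m} \to \bfs V$ on each object and then propagated through the induction. Once such a lemma is in hand the main induction closes; the base case $\bfn = \mathbf{0}$ reduces to Noetherianity of $k$, since $M(\mathbf{0})$ is the constant functor with value $k$ and its submodules are in bijection with ideals of $k$.
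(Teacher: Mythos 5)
There is a genuine gap in the proposed argument, and it is precisely at the step you flag as ``most delicate.'' In the four-term exact sequence $0 \to K(M(\bfn)/W) \to DW \to DM(\bfn) \to D(M(\bfn)/W) \to 0$, the term you need to control is $K(M(\bfn)/W)$, and the auxiliary lemma you anticipate, of the shape $\gd(KV) \leqslant \max\{\gd(V), \hd_1(V)\} + C_m$, cannot be invoked non-circularly: with $Q = M(\bfn)/W$, the long exact homology sequence for $0 \to W \to M(\bfn) \to Q \to 0$ shows that $\hd_1(Q)$ is (essentially) equal to $\gd(W)$, which is the quantity you are trying to bound in the first place. Moreover, bounding $\td(V)$ or $\gd(\bfk V)$ by $\gd$ and $\hd_1$ without first knowing Noetherianity amounts to a Castelnuovo--Mumford regularity estimate; the paper explicitly remarks (Section 6) that for $m > 1$ no such bound is currently known, so this cannot be treated as a routine auxiliary lemma. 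A secondary factual error is your claim that for $m = 1$ the natural map $V \to \Sigma V$ is injective for \emph{every} $V$: this fails for any torsion $\FI$-module (e.g.\ $V = M(0)/\mJ M(0)$), and even in the $\FI$ case the kernel $\bfk V$ is a genuine obstruction, with $\gd(\bfk V) = \td(V)$ in general.

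The paper sidesteps this circularity in a different way. Instead of inducting on the degree of a single free module, it performs a double induction on $m$ (via the truncation ideals $\mJ_i$, reducing the $\tau_i$-part of $V$ to an $\FI^{m-1}$-module) and on the generating degree, and it replaces $V$ by the limit $\bar V$ of the successive images $V = V^0 \to V^1 \to V^2 \to \cdots$ under $V^j \to \Sigma_1 V^j$. The point of Lemma \ref{filtration} is precisely that $\bar V \to \Sigma_1 \bar V$ is \emph{injective by construction}, so $K_1 \bar V = 0$ and the problematic kernel never enters; one then shows $\bar V$ is finitely presented by induction on $\gd$, concludes the filtration stabilizes, and propagates finite presentation backwards through the $V^j$ using the defining short exact sequences. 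If you want to salvage an argument in the spirit of yours, the essential missing idea is this stabilizing-filtration device (or an equivalent way to force the kernel to vanish) rather than an a priori degree bound on $\bfk$.

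Finally, a small remark on your base case: for $M(\mathbf{0})$ a submodule is not a single ideal of $k$ but an $\Nm$-indexed increasing family of ideals; finite generation then follows from the Noetherianity of $k$ together with a Dickson's-lemma-style observation that the set of objects where the ideal strictly jumps cannot contain an infinite ascending chain. This is fixable, but worth stating correctly.
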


\begin{remark} \normalfont
For $m = 1$, this result was proved in \cite[Theorem A]{CEFN}. For arbitrary $m \geqslant 1$, Gadish proved in \cite{Gad2} this theorem for fields of characteristic 0.
\end{remark}

\subsection{Relative projective modules}

For each object $\bfn$, its endomorphisms form a group $S_{\bfn}$ isomorphic to $S_{n_1} \times \ldots \times S_{n_m}$, where $S_{n_i}$ is a symmetric group on $n_i$ letters. Since the group algebra $k S_{\bfn}$ is a subalgebra of the category algebra $k \C$ and $M(\bfn)$ is a $(k \C, k S_{\bfn})$-bimodule, given a $k S_{\bfn}$-module $U$, it induces a $\C$-module $M(\bfn) \otimes _{k S_{\bfn}} U$. We call these modules \emph{basic relative projective modules}. A $\C$-module $V$ is said to be \emph{relative projective} if it has a filtration
\begin{equation*}
0 = V^{-1} \subseteq V^0 \subseteq \ldots \subseteq V^s = V
\end{equation*}
such that $V^{i+1}/V^i$ is isomorphic to a basic relative projective module for $-1 \leqslant i \leqslant s-1$. They are also called \emph{$\sharp$-filtered modules} or just \emph{filtered modules} in literature such as \cite{L2, LR, LY, N}. As the reader can see, they generalize projective modules, and have similar homological properties.

\begin{theorem} \label{homological characterizations of relative projective modules}
Let $V$ be $\C$-module over a commutative ring $k$, and suppose that $\gd(V) < \infty$. Then the following statements are equivalent:
\begin{enumerate}
\item $V$ is relative projective;
\item $H_s(V) = 0$ for all $s \geqslant 1$;
\item $H_1(V) = 0$;
\item $H_s(V) = 0$ for some $s \geqslant 1$.
\end{enumerate}
\end{theorem}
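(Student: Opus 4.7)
The plan is to prove the implications in the cycle $(1) \Rightarrow (2) \Rightarrow (3) \Rightarrow (4) \Rightarrow (1)$, with all of the content concentrated in $(1) \Rightarrow (2)$ and $(4) \Rightarrow (1)$; the middle two implications are tautological from the definitions of $\hd_s$ and of homological vanishing.

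For $(1) \Rightarrow (2)$ I would use the defining filtration $0 = V^{-1} \subseteq V^0 \subseteq \cdots \subseteq V^s = V$ and run the long exact sequence in $\Tor^{k\C}_\ast(k\C/\mm,-)$ to reduce the problem to the case of a single basic relative projective $M(\bfn) \otimes_{kS_\bfn} U$. Since $\C$ is by construction a product of $m$ copies of (the skeleton of) $\FI$, the free module $M(\bfn)$ factors as an external tensor product $M(n_1) \boxtimes \cdots \boxtimes M(n_m)$ and $S_\bfn = S_{n_1} \times \cdots \times S_{n_m}$ respects this factorization, so a K\"unneth-type argument combined with the known $\FI$ analogue (established by Church--Ellenberg--Farb--Nagpal and refined by Nagpal and Li--Yu) delivers the desired vanishing $H_s(M(\bfn) \otimes_{kS_\bfn} U) = 0$ for $s \geq 1$. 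Alternatively one can construct directly an acyclic complex of free $\C$-modules resolving $M(\bfn) \otimes_{kS_\bfn} U$, built out of the self-embedding functors $\iota_i$ and the ranked poset structure on $\Nm$.

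For $(4) \Rightarrow (1)$ I induct on $N = \gd(V)$. For each $\bfn$ with $\deg \bfn = N$ set $U_\bfn = H_0(V)_\bfn$, viewed as a $kS_\bfn$-module, and form the basic relative projective $P = \bigoplus_{\deg \bfn = N} M(\bfn) \otimes_{kS_\bfn} U_\bfn$ together with a natural homomorphism $\pi \colon P \to V$ realising the identity on top-degree generators. The kernel $V' := \ker \pi$ and the cokernel $\bar V := \mathrm{coker}\,\pi$ fit into a four-term exact sequence $0 \to V' \to P \to V \to \bar V \to 0$ in which both $\gd(V')$ and $\gd(\bar V)$ are strictly less than $N$. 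Splitting this into two short exact sequences and using the vanishing $H_i(P) = 0$ for $i \geq 1$ from the previous step, the long exact sequence in $\Tor$ transfers the homological hypothesis from $V$ to $V'$ and $\bar V$, so the inductive hypothesis yields filtrations of $V'$ and $\bar V$ by basic relative projectives; splicing these with the obvious one-step filtration of $P$ produces the required filtration of $V$.

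The main obstacle is the case $s = 1$ of hypothesis $(4)$, since the vanishing of $H_1(V)$ alone does not directly propagate to any nonzero homological degree of $V'$ or $\bar V$ through the long exact sequence. To overcome this I plan to establish as an auxiliary bootstrap, by a separate induction on $\gd(V)$, that once $\gd(V) < \infty$ the single condition $H_1(V) = 0$ already forces $H_s(V) = 0$ for every $s \geq 1$. The inductive step exploits the isomorphism $H_t(V) \cong H_{t-1}(V')$ for $t \geq 2$ produced by the long exact sequence above: a vanishing on $V'$ of smaller generating degree (supplied by the outer induction) climbs up to give the corresponding vanishing on $V$. Interlocking this nested induction on $s$ with the outer induction on $\gd$ is the chief delicate point; once it is in place, the conclusion $(4) \Rightarrow (1)$ follows formally from the construction and splicing of filtrations described above.
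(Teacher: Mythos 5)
The direction $(1)\Rightarrow(2)$ in your proposal is essentially sound: reducing to a single basic relative projective $M(\bfn)\otimes_{kS_\bfn}U$ via the filtration and a long exact sequence is exactly the right move, and then one can conclude either by your K\"unneth route or, as the paper does, by observing that $M(\bfn)\otimes_{kS_\bfn}-$ is exact (because $S_\bfn$ acts freely on $\C(\bfn,\bft)$), so applying it to a $kS_\bfn$-projective resolution of $U$ produces a projective $k\C$-resolution whose homology after $k\C/\mm\otimes_{k\C}-$ is concentrated in degree $0$.

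The direction $(4)\Rightarrow(1)$ however has a genuine gap, and the gap is precisely the claim that $\gd(V')$ and $\gd(\bar V)$ are both strictly less than $N=\gd(V)$. This is false for $V'$: by construction $P=\bigoplus_{\deg\bfn=N}M(\bfn)\otimes_{kS_\bfn}U_\bfn$ vanishes on every object of degree $<N$, so $V'\subseteq P$ also vanishes there and hence $\gd(V')\geqslant N$ whenever $V'\neq 0$. The kernel of a presentation never has smaller generating degree than the module it presents; it has (at best equal) larger generating degree, controlled by $\hd_1(V)$, which is exactly the quantity you cannot bound at this stage. Your nested induction on $\gd$ therefore cannot transfer the hypothesis to $V'$, and the auxiliary bootstrap ``$H_1(V)=0\Rightarrow H_s(V)=0$ for all $s$'' collapses for the same reason. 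There is also a secondary problem with the construction of $\pi$: taking $U_\bfn=H_0(V)_\bfn$ and asking for a map $P\to V$ that is ``the identity on top-degree generators'' requires a $kS_\bfn$-module section of the quotient $V_\bfn\to H_0(V)_\bfn$, which need not exist over an arbitrary commutative ring.

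The paper avoids this trap entirely by going through a projective resolution $P^\bullet\to V$ and the cycles $Z^i$: dimension shifting gives $0=H_s(V)=H_1(Z^{s-1})$, so $Z^{s-1}$ is relative projective by the already-established $(3)\Leftrightarrow(1)$ equivalence, and then the recursion lemma (the paper's Lemma \ref{recursion}(2): if $V$ is relative projective and $V'$ is a relative projective submodule then $V/V'$ is relative projective) is applied to $0\to Z^{s-1}\to P^{s-2}\to Z^{s-2}\to 0$ and iterated down to $V$. That recursion lemma is the real technical content you are missing; it is proved by induction on generating degree but using the derivative functor $\bfd$ and torsion-freeness, not by presenting $V$ by generators. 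If you want a working argument you should either prove an analogue of that lemma or switch to the projective-resolution-plus-dimension-shift strategy; the presentation-and-induct-on-$\gd$ strategy does not close.
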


\begin{remark} \normalfont
The case of $m = 1$ was independently proved by the authors in \cite{LY} and Ramos in \cite{R1}.
\end{remark}

\subsection{Torsion theory}

Let $V$ be a $\C$-module. An element $v \in V_{\bfn}$ is said to be \emph{torsion} if there exist another object $\bft$ and an injection $\bff: \bfn \to \bft$ such that $\bff \cdot v = 0$. Torsion elements in $V$ generate a submodule of $V$, denoted by $V_T$. If $V_T = 0$, we say that $V$ is \emph{torsion free}; if $V_T = V$, we call $V$ a \emph{torsion module}. In general, there is a short exact sequence $0 \to V_T \to V \to V_F \to 0$, where $V_F$ and $V_T$ are the \emph{torsion free part} and the \emph{torsion part} of $V$ respectively. The \emph{torsion degree} $\td(V)$ of $V$ is defined in Definition \ref{torsion degree}.

The following theorem generalizes the corresponding result of finitely generated $\FI$-modules over commutative Noetherian rings.

\begin{theorem} \label{relative projective complexes}
Let $V$ be finitely generated $\C$-module over a commutative Noetherian ring $k$. There exists a complex
\begin{equation*}
F^{\bullet}: \quad 0 \to V \to F^0 \to F^1 \to \ldots \to F^l \to 0
\end{equation*}
such that the following statements hold:
\begin{enumerate}
\item each $F^j$ is a relative projective module with $\gd(F^j) \leqslant \gd(V) - j$;
\item $l \leqslant \gd(V)$,
\item all homology groups $H^j (F^{\bullet})$ of this complex are finitely generated torsion modules.
\end{enumerate}

Consequently, $\Sigma_1^{n_1} \ldots \Sigma_m^{n_m} V$ is a relative projective module if $n_i \geqslant \td_i(H^j (F^{\bullet})) + 1$ for all $0 \leqslant j \leqslant l$ and $i \in [m]$.
\end{theorem}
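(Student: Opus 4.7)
The plan is to construct the complex $F^\bullet$ by induction on $N := \gd(V)$, and then to derive the consequence via the shift-functor calculus and the homological characterization in Theorem~\ref{homological characterizations of relative projective modules}. For the base case, when $V$ is torsion one may take $F^0 = 0$ and $l = 0$; the only nonzero homology of $0 \to V \to 0 \to 0$ is then $V$ itself, which is torsion by hypothesis, and the degree bound $\gd(F^0) \leqslant N$ is vacuously satisfied.

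For the inductive step with $\gd(V) = N \geqslant 0$ and $V$ not purely torsion, the crucial construction is a \emph{relative projective hull} of $V$: a relative projective module $F^0$ with $\gd(F^0) \leqslant N$, together with a morphism $\varphi\colon V \to F^0$ whose kernel is the torsion part $V_T$ and whose cokernel $C$ satisfies $\gd(C) \leqslant N - 1$. A natural candidate for $F^0$ is built from the torsion-free quotient $V_F = V/V_T$ as a direct sum of basic relative projectives $M(\bfn) \otimes_{kS_{\bfn}} (V_F)_{\bfn}$ indexed by top-degree objects $\bfn$ (those with $\deg \bfn = N$), assembled so that the canonical map $V \to F^0$ restricts to the identity on the degree-$N$ part; one then checks that $\ker\varphi = V_T$ and that the generating degree of the cokernel strictly drops. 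Since $\C$ is locally Noetherian by Theorem~\ref{noetherianity}, the cokernel $C$ is finitely generated, and the inductive hypothesis applied to $C$ produces a complex $0 \to C \to F^1 \to \cdots \to F^l \to 0$ with $\gd(F^j) \leqslant (N-1) - (j-1) = N - j$ for $j \geqslant 1$, with $l \leqslant N$, and with torsion homology. Splicing at $C$ yields the required complex for $V$.

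For the consequence, set $T := \Sigma_1^{n_1} \cdots \Sigma_m^{n_m}$ and apply $T$ to $F^\bullet$. Each $\Sigma_i$ is exact, and a torsion module $W$ with $\td_i(W) < n_i$ satisfies $\Sigma_i^{n_i} W = 0$, so the hypothesis $n_i \geqslant \td_i(H^j(F^\bullet)) + 1$ for all $i, j$ forces the shifted sequence $0 \to TV \to TF^0 \to \cdots \to TF^l \to 0$ to be exact. Shift functors preserve the class of relative projective modules, since $\Sigma_i$ applied to a basic relative projective $M(\bfn) \otimes_{kS_{\bfn}} U$ decomposes via the branching rule for $S_{\bfn} \hookrightarrow S_{\bfn + \bfo_i}$ into a direct sum of basic relative projectives; hence each $TF^j$ remains relative projective. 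Breaking the exact sequence into short exact sequences from the right end and chasing the long exact sequence in $\Tor_*^{k\C}(k\C/\mm, -)$, using Theorem~\ref{homological characterizations of relative projective modules} to propagate the vanishing of higher homology leftwards, yields $H_s(TV) = 0$ for all $s \geqslant 1$; combined with $\gd(TV) < \infty$, one more application of Theorem~\ref{homological characterizations of relative projective modules} identifies $TV$ as relative projective.

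The principal obstacle is the hull construction in the inductive step: defining $\varphi$ canonically and verifying both the torsion-kernel property and the strict drop in generating degree of the cokernel. In the $\FI$ case this is the semi-induced hull developed in \cite{LY, R1}; generalising it to $\FI^m$ requires careful bookkeeping over the multi-index ranked poset $\Nm$ and the branching rules for products $S_{n_1} \times \cdots \times S_{n_m}$, but the underlying scheme is the same. A secondary but essential technical point, which I expect to be routine, is verifying that each $\Sigma_i$ sends basic relative projectives to (direct sums of) basic relative projectives in the $\FI^m$ setting, since this is what makes the shift argument for the consequence go through.
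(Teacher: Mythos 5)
Your overall strategy (induct on $\gd(V)$, build $F^0$ with kernel $V_T$ and cokernel of strictly smaller generating degree, splice, then shift to deduce the consequence) matches the paper's, and the argument you give for the consequence is essentially the one in the paper. However, there is a genuine gap in the construction of $F^0$: the ``natural candidate'' $F^0 = \bigoplus_{\deg\bfn = N} M(\bfn)\otimes_{kS_{\bfn}} (V_F)_{\bfn}$ does not admit a $\C$-linear map $\varphi\colon V \to F^0$ restricting to the identity in top degree. For any $\bft$ not dominating $\bfn$, one has $(M(\bfn)\otimes (V_F)_{\bfn})_{\bft}=0$, so $\varphi$ kills $V_{\bft}$; by functoriality $\varphi$ must then kill the image of $V_{\bft}$ inside $V_{\bfn}$, which is typically a nonzero subspace of $(V_F)_{\bfn}$. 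Already for $\FI$ with $V=M(0)\oplus M(1)$ the image of $V_{0}$ in $V_{1}$ is one-dimensional and nonzero while $V$ is torsion-free, so no such $\varphi$ exists. Put differently, $\bigoplus M(\bfn)\otimes (V_F)_{\bfn}$ maps \emph{onto} $V$, not the other way, and the two directions are not interchangeable.

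The correct hull is the one the paper uses: $F^0 = \Sigma_1^n\cdots\Sigma_m^n V_F$ for $n\gg 0$. Proposition~\ref{shifted modules become relative projective} (proved beforehand precisely for this purpose, itself by induction on $\gd$ via Lemma~\ref{recursion}) guarantees this is relative projective; Lemma~\ref{shifted modules become torsion free} together with Lemma~\ref{finite torsion degree} guarantees $\Sigma_1^n\cdots\Sigma_m^n V_T=0$ so that the composite $V\to V_F\hookrightarrow F^0$ has kernel exactly $V_T$; and Lemma~\ref{basic properties of sigma}(3), applied through the identification of the cokernel with a shift of $\bfd V_F$, gives the strict drop $\gd(\mathrm{coker})<\gd(V)$. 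Your closing paragraph gestures toward the ``semi-induced hull of [LY, R1],'' which is indeed this shifted module, but the candidate you actually wrote down is a different object that does not do the job. Also note a small imprecision in your consequence argument: you only need the shift functors $\Sigma_i$ to preserve relative projectivity, which is Proposition~\ref{properties of relative projective modules} in the paper and does not require invoking branching rules for $S_{\bfn}\hookrightarrow S_{\bfn+\bfo_i}$; the general statement holds over any commutative ring $k$, where branching-rule decompositions are unavailable.
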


By the theorem, we define $N_i(V) = \max \{ \td_i(H^j (F^{\bullet})) \mid j \geqslant 0 \}$ for $i \in [m]$, which are finite numbers independent of the choice of a particular complex.

\begin{remark} \normalfont
For $m = 1$, this result was verified by Nagpal in \cite[Theorem A]{N}. Other proofs with explicit upper bounds were given by the authors in \cite{L2, LY} and Ramos in \cite{R1}. In this paper we will modify the techniques in \cite{L2, LY} to prove the above theorem.

When $m = 1$, this complex has played a vital role in establishing upper bounds of homological invariants and developing a local cohomology theory for $\FI$-modules; see \cite{L2, LR, LY}.
\end{remark}

\subsection{Representation stability}

The above results can be used to prove certain representation stability patterns of finitely generated $\C$-modules over fields. Let us recall some notation. A \emph{partition} of a nonnegative integer $n$ is a sequence $\lambda = (\lambda_1 \geqslant \lambda_2 \geqslant \ldots \geqslant \lambda_l)$ such that $|\lambda| = \lambda_1 + \ldots + \lambda_l = n$. For $t \geqslant n + \lambda_1$, we define the \emph{padded partition} to be
\begin{equation*}
\lambda(t) = (t -n \geqslant \lambda_1 \geqslant \ldots \geqslant \lambda_l)
\end{equation*}
which is a partition of $m$.

For $\bfn = (n_1, \, \ldots, \, n_m) \in \Nm$, an \emph{$m$-fold partition} $\boldsymbol{\lambda} = (\lambda^1, \, \ldots, \, \lambda^m)$ such that $\lambda^i$ is a partition of $n_i$ for $i \in [m]$. Let $\boldsymbol{\lambda}_1 = (\lambda^1_1, \, \ldots, \, \lambda^m_1) \in \Nm$. For $\bft \geqslant \bfn + \boldsymbol{\lambda}_1$, one defines an \emph{$m$-fold padded partition}
\begin{equation*}
\boldsymbol{\lambda} (\bft) = (\lambda^1 (t_1), \, \ldots, \, \lambda^m (t_m)).
\end{equation*}

When $k$ is a field of characteristic 0, for an object $\bfn$ in $\C$ and its endomorphism group $S_{\bfn} \cong S_{n_1} \times \ldots \times S_{n_m}$, from group representation theory we know that simple $k S_{\bfn}$-modules are parameterized by $m$-fold partitions of $\bfn$. Moreover, when $\bft \geqslant \bfn + \boldsymbol{\lambda}_1$, $m$-fold padded partitions provide a uniform way to describe irreducible modules of distinct $S_{\bft}$. Denote by $L_{\boldsymbol{\lambda} (\bft)}$ the irreducible $k S_{\bft}$-module parameterized by the $m$-fold partition $\boldsymbol{\lambda} (\bft)$.

Motivated by \cite{Gad2}, we define representation stability of $\FI^m$-modules as follows:

\begin{definition} \label{representation stability}
Let $k$ be a field of characteristic 0 and $V$ be a finitely generated $\C$-module. We say that $V$ has representation stability if for all morphisms $\bff: \bfn \to \bft$, when $n_i \gg 0$ for $i \in [m]$, one has:
\begin{itemize}
\item the linear map $V(\bff): V_{\bfn} \to V_{\bft}$ is injective;
\item the image of $V(\bff)$ generates $V_{\bft}$ as a $k S_{\bft}$-module;
\item there exist finitely many $m$-fold partitions $\boldsymbol{\lambda}^{(1)}, \, \ldots, \boldsymbol{\lambda}^{(k)}$ such that
\begin{equation*}
V_{\bfn} = \bigoplus _{i \in [k]} c_i L_{\boldsymbol{\lambda}^{(i)}} (\bfn),
\end{equation*}
where the multiplicity $c_i$ is independent of $\bfn$.
\end{itemize}
\end{definition}

Our next theorem generalizes representation stability of $\FI$-modules proved by Church, Ellenberg, and Farb in \cite{CEF}.

\begin{theorem} \label{representation stability property}
Let $V$ be a $\C$-module over a field of characteristic 0. Then $V$ is finitely generated if and only if it has representation stability. Moreover, in that case the numbers $n_i$, $i \in [m]$, in the above definition only need to satisfy $n_i \geqslant \max \{2\gd(V), \, N_i(V) + 1\}$.
\end{theorem}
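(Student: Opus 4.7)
The plan is to combine the torsion-eliminating resolution from Theorem \ref{relative projective complexes} with the coordinate-wise application of Pieri's rule for symmetric groups. The reverse direction is direct: representation stability forces $V_{\bft}$ to be $kS_{\bft}$-generated by the image of any fixed $\bff: \bfn_0 \to \bft$ (for $\bfn_0$ a stability threshold), and $V_{\bfn_0}$ is finite-dimensional since it is a finite sum of irreducibles; finitely many additional generators cover the values of $V$ at the remaining (necessarily finite-dimensional) objects, yielding a finite generating set for $V$.

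For the forward direction, apply Theorem \ref{relative projective complexes} to $V$ to obtain
\begin{equation*}
0 \to V \to F^0 \to F^1 \to \cdots \to F^l \to 0
\end{equation*}
with each $F^j$ relative projective, $\gd(F^j) \leqslant g := \gd(V)$, and homology groups $H^j(F^\bullet)$ finitely generated torsion modules with $\td_i(H^j(F^\bullet)) \leqslant N_i := N_i(V)$. For any $\bfn$ with $n_i \geqslant N_i + 1$ the complex is exact at $\bfn$. To verify injectivity of $V(\bff)$ for $\bff: \bfn \to \bft$ with $n_i \geqslant \max\{2g, N_i + 1\}$, observe that $F^0(\bff)$ is injective: this is transparent on each basic relative projective summand $M(\bfs) \otimes_{kS_{\bfs}} U$, since $\bff$ acts by post-composition on injections $\bfs \to \bfn$, and the property propagates up the $\sharp$-filtration of $F^0$; combined with the exact inclusion $V_\bfn \hookrightarrow F^0_\bfn$, this forces $V(\bff)$ to be injective. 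For the generation condition, since $V$ is generated in degrees $\leqslant g \leqslant n_i$, every generator $V(\bfh)(v)$ of $V_\bft$ (with $\deg(\bfs) \leqslant g$, $\bfh: \bfs \to \bft$, $v \in V_\bfs$) factors as $V(\bfg)(V(\bfg_0)(v))$ through $\bfn$; together with the transitive $S_\bft$-action on $\C(\bfn, \bft)$ by post-composition, this shows $V_\bft$ is the $kS_\bft$-module generated by $\mathrm{im}(V(\bff))$.

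For the stable decomposition of $V_\bfn$, the exactness at $\bfn$ yields the Grothendieck identity $[V_\bfn] = \sum_{j=0}^l (-1)^j [F^j_\bfn]$ in the representation ring of $kS_\bfn$. Each $F^j$ filters by basic relative projective modules $M(\bfs) \otimes_{kS_{\bfs}} L_{\boldsymbol{\mu}}$ with $\deg(\bfs) \leqslant g$, whose values at $\bfn \geqslant \bfs$ are the induced representations $\mathrm{Ind}_{S_{\bfn - \bfs} \times S_{\bfs}}^{S_\bfn}(k \otimes L_{\boldsymbol{\mu}})$; Pieri's rule applied in each coordinate decomposes these into direct sums of irreducibles $L_{\boldsymbol{\lambda}(\bfn)}$ over $m$-fold padded partitions, with multiplicities independent of $\bfn$ whenever $n_i \geqslant s_i + \mu^i_1$, a quantity bounded by $2 s_i \leqslant 2 g$. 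Hence for $n_i \geqslant 2g$ the alternating sum has stable, $\bfn$-independent coefficients, and since the left-hand side is a genuine $kS_\bfn$-module these assemble into an actual direct sum with nonnegative multiplicities. The main obstacle is precisely this last step: simultaneously controlling the Pieri stability range (forcing $n_i \geqslant 2g$) and the torsion-eliminating exactness of the complex (forcing $n_i \geqslant N_i + 1$) is what produces the stated bound $n_i \geqslant \max\{2g, N_i + 1\}$.
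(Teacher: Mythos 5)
Your argument follows the paper's route exactly: produce the complex of Theorem \ref{relative projective complexes}, use $n_i \geqslant N_i(V)+1$ to kill the torsion homology and get exactness at $\bfn$ (hence an embedding $V_\bfn \hookrightarrow F^0_\bfn$ giving injectivity), and pass to the Grothendieck ring of $kS_\bfn$ where the alternating sum of the projective terms stabilizes, with $n_i \geqslant 2\gd(V)$ being the Pieri threshold. The only difference is that you unwind the final step via Pieri's rule where the paper cites Gadish's \cite[Theorem 6.13]{Gad2} together with the argument of \cite[Theorem 1.12]{GL2}; this is the same computation, just carried out rather than referenced, so the proofs are essentially identical.
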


\begin{remark} \normalfont
Gadish showed the representation stability for finitely generated projective $\C$-modules in \cite[Theorem 6.13]{Gad2}. \footnote{Note that \emph{free $\C$-modules} have different meanings in this paper and in \cite{Gad2}. Free $\C$-modules defined in \cite[Definition 1.8]{Gad2} are direct sums of basic relative projective modules in our sense, and when $k$ is a field of characteristic 0, they coincide with projective $\C$-modules.} The conclusion of the above theorem follows immediately from his result as well as Theorem \ref{relative projective complexes}.
\end{remark}

Another important asymptotic behavior is the polynomial growth property. We have:

\begin{theorem} \label{polynomial growth}
Let $V$ be a finitely generated $\C$-module over a field. Then there exist $m$ polynomials $P_i \in \mathbb{Q}[X]$, $i \in [m]$, with degrees not exceeding $\gd(V)$ such that for objects $\bfn$ satisfying $n_i \geqslant \max \{\gd(V), \, N_i(V) + 1 \}$ for all $i \in [m]$, one has
\begin{equation*}
\dim_k V_{\bfn} = \prod _{i \in [m]} P_i(n_i).
\end{equation*}
\end{theorem}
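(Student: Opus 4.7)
The plan is to deduce the dimension formula from an Euler-characteristic computation applied to the complex supplied by Theorem \ref{relative projective complexes}, combined with an explicit count on basic relative projective modules.

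First, I would apply Theorem \ref{relative projective complexes} to produce
\begin{equation*}
0 \to V \to F^0 \to F^1 \to \cdots \to F^l \to 0
\end{equation*}
with $l \leqslant \gd(V)$, each $F^j$ relative projective with $\gd(F^j) \leqslant \gd(V) - j$, and each homology group $H^j(F^\bullet)$ a finitely generated torsion module satisfying $\td_i(H^j(F^\bullet)) \leqslant N_i(V)$ for every $i \in [m]$. Under the assumption $n_i \geqslant N_i(V) + 1$ for all $i$, every $H^j(F^\bullet)_\bfn$ vanishes, so evaluating the complex at $\bfn$ produces an exact sequence of finite-dimensional $k$-vector spaces, and its Euler characteristic yields
\begin{equation*}
\dim_k V_\bfn \;=\; \sum_{j=0}^l (-1)^j \dim_k F^j_\bfn.
\end{equation*}

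Second, I would compute the right-hand side using the defining filtration of each $F^j$ by basic relative projective modules. For a typical quotient $M(\bft) \otimes_{kS_\bft} U$, the value at $\bfn$ is $k\C(\bft, \bfn) \otimes_{kS_\bft} U$. Since $\C(\bft, \bfn) = \prod_{i=1}^m \mathrm{Inj}([t_i], [n_i])$ carries a free right action of $S_\bft = S_{t_1} \times \cdots \times S_{t_m}$ by precomposition, one has
\begin{equation*}
\dim_k \bigl(M(\bft) \otimes_{kS_\bft} U\bigr)_\bfn \;=\; \dim_k(U) \prod_{i=1}^m \binom{n_i}{t_i}.
\end{equation*}
Because $\deg(\bft) \leqslant \gd(F^j) \leqslant \gd(V)$, each $t_i$ that occurs is at most $\gd(V)$, so every $\binom{X}{t_i}$ is a polynomial of degree at most $\gd(V)$. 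Summing along the filtration and substituting into the Euler-characteristic identity realizes $\dim_k V_\bfn$ as an integer linear combination of products $\prod_i \binom{n_i}{t_i}$, hence as a polynomial in $n_1, \ldots, n_m$ of degree at most $\gd(V)$ in each variable.

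Finally, to extract the factored form $\prod_{i=1}^m P_i(n_i)$ asserted in the statement, I would invoke Theorem \ref{representation stability property}: in characteristic zero, for such $\bfn$, the decomposition $V_\bfn = \bigoplus_j c_j L_{\boldsymbol{\lambda}^{(j)}(\bfn)}$ has multiplicities $c_j$ independent of $\bfn$, and each irreducible $L_{\boldsymbol{\lambda}(\bfn)} = L_{\lambda^1(n_1)} \boxtimes \cdots \boxtimes L_{\lambda^m(n_m)}$ has dimension that splits coordinate-by-coordinate into a product of the polynomials $\dim_k L_{\lambda^i(n_i)}$ of degree $|\lambda^i| \leqslant \gd(V)$ in $n_i$. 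I expect this last consolidation to be the main obstacle: the Euler-characteristic computation alone only delivers a sum of products, and collapsing it into a single product of one-variable polynomials requires combining the free-action count on $\C(\bft, \bfn)$ with the tensor factorization of irreducibles of $S_\bfn$ afforded by the semisimplicity of $kS_\bfn$ in characteristic zero, so that the multiplicity bookkeeping factors uniformly across the $m$ coordinates.
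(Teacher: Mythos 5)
Your first two steps reproduce exactly the paper's argument: evaluate the complex $F^{\bullet}$ at $\bfn$ (exactness follows from $n_i \geqslant N_i(V)+1$), take the Euler characteristic, and reduce to counting dimensions of basic relative projective modules via the free right $S_{\bft}$-action on $\C(\bft,\bfn)$, which gives $\dim_k (M(\bft)\otimes_{kS_{\bft}} U)_{\bfn} = \dim_k(U)\prod_i\binom{n_i}{t_i}$. Up to this point the proposal matches the paper.

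The issue you flag at the end is a real gap, and your proposed fix does not close it. The Euler characteristic and the filtration each produce a $\mathbb{Z}$-linear combination of products $\prod_i\binom{n_i}{t_i}$; a sum of such products is \emph{not} generally a single product $\prod_i P_i(n_i)$. (Consider $V = M((1,0))\oplus M((0,1))$ in $\FI^2$: $\dim_k V_{\bfn} = n_1 + n_2$, which does not factor.) Your attempt to rescue the factorization via Theorem~\ref{representation stability property} fails for two reasons. First, the representation stability decomposition $V_{\bfn} = \bigoplus_j c_j L_{\boldsymbol{\lambda}^{(j)}(\bfn)}$ yields $\dim_k V_{\bfn} = \sum_j c_j \prod_i \dim_k L_{\lambda^{(j),i}(n_i)}$, which is again a sum of products and gives you nothing beyond what the filtration count already did. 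Second, that theorem requires characteristic $0$, whereas the statement under proof is over an arbitrary field.

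You should also note that the paper's own proof has precisely the same gap: the lemma preceding the theorem reduces from a filtered relative projective module to its basic filtration quotients ``since it suffices to show that each filtration component satisfies the conclusion,'' but this reduction only yields a sum of products, and the Euler-characteristic step in the theorem's proof introduces a further alternating sum. The honest conclusion both the lemma and the theorem support is that $\dim_k V_{\bfn}$ is, for $\bfn$ in the stated range, given by a single polynomial $P \in \mathbb{Q}[X_1,\ldots,X_m]$ of degree at most $\gd(V)$ in each variable (indeed an integer linear combination of products of binomial coefficients), not a product of $m$ one-variable polynomials. If you wish to retain a factored statement you would need an additional hypothesis such as $V$ being an external tensor product of $\FI$-modules; in general the product form should be weakened.
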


\subsection{Projective dimensions}

Homological properties of relative projective modules can also be used to classify finitely generated $\C$-modules whose projective dimension is finite, extending \cite[Theorem 1.5]{LY}.

\begin{theorem} \label{projective dimension}
Let $k$ be a commutative Noetherian ring whose finitistic dimension $\fdim k$ is finite, and let $V$ be a finitely generated $\C$-module. Then the projective dimension $\pd _{k \C} (V)$ is finite if and only if there exist a finite set $S$ of objects $\bfn$ and finitely generated $k S_{\bfn}$-modules $W_{\bfn}$, such that $V$ is a relative projective module whose filtration components are $M(\bfn) \otimes _{k S_{\bfn}} W_{\bfn}$ and $\pd _{k} (W_{\bfn}) < \infty$ for all $\bfn \in S$. Moreover, in that case
\begin{equation*}
\pd(V) = \max \{ \pd _{k} (W_{\bfn}) \} _{\bfn \in S} \leqslant \fdim k.
\end{equation*}
\end{theorem}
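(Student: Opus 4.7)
My plan is a two-directional argument, anchored by a key identity for basic relative projective modules and the structural input of Theorem \ref{relative projective complexes}.

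The computational core is the identity
\begin{equation*}
\pd_{k\C}\bigl(M(\bfn) \otimes_{kS_\bfn} W\bigr) = \pd_k(W)
\end{equation*}
for every finitely generated $kS_\bfn$-module $W$. The lower bound follows by evaluation at $\bfn$: the value $M(\bft)_\bfn = k\C(\bft, \bfn)$ of any free $\C$-module is a free $k$-module of finite rank, so any finite $k\C$-projective resolution of $M(\bfn) \otimes_{kS_\bfn} W$ evaluates to a finite $k$-projective resolution of $W = (M(\bfn) \otimes_{kS_\bfn} W)_\bfn$. The upper bound is obtained by lifting a finite $k$-projective resolution of $W$ to a finite $k\C$-projective resolution of $M(\bfn) \otimes_{kS_\bfn} W$, exploiting that $kS_\bfn$ is $k$-free of finite rank and that $M(\bfn)$ is projective as a right $kS_\bfn$-module (since $S_\bfn$ acts freely on injections $\bfn \hookrightarrow \bft$).

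For the ``if'' direction, given the filtration $0 = V^{-1} \subseteq \ldots \subseteq V^s = V$, iterating the long exact Ext sequence associated to each $0 \to V^i \to V^{i+1} \to M(\bfn_i) \otimes_{kS_{\bfn_i}} W_{\bfn_i} \to 0$ and invoking the key identity yields $\pd_{k\C}(V) \leq \max_i \pd_k(W_{\bfn_i}) \leq \fdim k$, where the second inequality holds because each $W_{\bfn_i}$ is finitely generated of finite $k$-projective dimension. The matching lower bound is obtained by evaluating at the object $\bfn_{i_0}$ realizing the maximum.

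For the ``only if'' direction, I would apply Theorem \ref{relative projective complexes} to obtain a complex $0 \to V \to F^0 \to \ldots \to F^l \to 0$ with $F^j$ relative projective and $H^j(F^\bullet)$ finitely generated torsion. Applying sufficient shifts $\Sigma_1^{n_1} \cdots \Sigma_m^{n_m}$ with $n_i \geq N_i(V) + 1$ kills the torsion cohomologies, so the shifted module $\Sigma_1^{n_1} \cdots \Sigma_m^{n_m} V$ is relative projective with filtration components $M(\bfn) \otimes_{kS_\bfn} W_\bfn$. Since the shift functors are exact and do not increase $k\C$-projective dimension, the shifted module inherits finite $\pd_{k\C}$ from $V$, and the key identity then forces $\pd_k(W_\bfn) < \infty$ on each filtration piece. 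To descend the conclusion back to $V$ itself, I would argue by induction on the amount of shift, using that at each step the fiber of the shift is controlled by the torsion cohomologies already discarded.

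The main obstacle is constructing the lift in the key lemma --- producing a finite $k\C$-projective resolution of $M(\bfn) \otimes_{kS_\bfn} W$ of length exactly $\pd_k(W)$, bypassing the possibly infinite $\pd_{kS_\bfn}(W)$. A secondary obstacle is the descent step in the ``only if'' direction: lifting the relative projective filtration of a sufficiently shifted module back to a compatible filtration of $V$, which requires a careful tracking of the torsion components $H^j(F^\bullet)$ along each of the $m$ coordinate axes via $(\td_1, \ldots, \td_m)$.
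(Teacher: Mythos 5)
Your ``only if'' direction takes a detour through Theorem~\ref{relative projective complexes} that does not close, and in doing so you miss the key step. Since $H_s(V) = \Tor_s^{k\C}(k\C/\mm, V)$ vanishes for $s > \pd_{k\C}(V)$, Theorem~\ref{homological characterizations of relative projective modules} applies to $V$ \emph{itself} and immediately gives that $V$ is relative projective --- no shifting and no descent are needed. Your plan to show $\Sigma_1^{n_1}\cdots\Sigma_m^{n_m} V$ is relative projective for $n_i \gg 0$ and then ``descend by induction on the amount of shift'' is exactly where the argument fails: a module whose shift is relative projective (or zero) need not itself be relative projective --- take any nonzero finitely generated torsion module, whose shift eventually vanishes while the module itself is not torsion free. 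So the finiteness of $\pd_{k\C}(V)$ must be injected \emph{before} shifting, which is precisely what the homological characterization does; your sketch gives no mechanism for pulling a filtration back across a shift, and this is a missing idea, not the ``secondary obstacle'' you label it.

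There are also two real gaps in the computational core. First, the upper bound $\pd_{k\C}(M(\bfn)\otimes_{kS_\bfn} W) \leqslant \pd_k(W)$ cannot be obtained by applying $M(\bfn)\otimes_{kS_\bfn}-$ to a finite $k$-projective resolution of $W$, since such a resolution need not consist of $kS_\bfn$-modules. The paper instead cites \cite[Lemmas 4.5 and 4.6]{LY}: one proves $\pd_{k\C}(M(\bfn)\otimes_{kS_\bfn} W) = \pd_{kS_\bfn}(W)$, and then uses a Rim-type theorem asserting $\pd_{kS_\bfn}(W) = \pd_k(W)$ \emph{provided the left side is finite}. The unconditional identity you state is false: for $k = \mathbb{F}_p$ with $p \mid |S_\bfn|$ and $W$ the trivial module, $\pd_k(W) = 0$ while $M(\bfn)\otimes_{kS_\bfn} W$ is not $k\C$-projective. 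Second, in the ``if'' direction your lower bound ``by evaluating at $\bfn_{i_0}$'' produces a finite $k$-resolution of the $k$-module $V_{\bfn_{i_0}}$, not of the filtration factor $W_{\bfn_{i_0}}$; these differ whenever $\bfn_{i_0}$ is not minimal in $S$, and projective dimension does not pass to filtration subquotients in general. The paper runs an induction on $|S|$, peeling off a maximal object (following \cite[Lemma 4.7]{LY}), to get the inequality $\pd_{k\C}(V) \geqslant \pd_{k\C}(M(\bfn)\otimes_{kS_\bfn} W_\bfn)$ for each $\bfn \in S$.
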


\subsection{Remarks}

The behaviors of $\FI_G$ (see \cite{LR} for a definition) are very similar to those of $\FI$, where $G$ is a finite group. One can define product categories $\FI_G^m$ and shift functors, and establish analogue versions of the main results listed above  for $\FI_G^m$-modules.

The category $\OI$ was introduced in \cite{SS2}. Its objects are finite linearly ordered sets, and morphisms are order-preserving injections. One can define the product category $\OI^m$ for $m \geqslant 1$ together with shift functors for $\OI^m$-modules. Using the techniques described in Section 3, we can prove a similar version of Theorem \ref{noetherianity} for $\OI^m$. However, since Statement (1) of Lemma \ref{basic combinatorics} fails for $\OI^m$, we cannot extend other main results of $\FI^m$ in this paper to $\OI^m$.

We would like to thank Wee Liang Gan for many valuable discussions and comments on our manuscript. After the paper was posted on arXiv, Sam told us that the Noetherianity of $\FI^m$ is an immediately corollary of \cite[Theorem 1.1.3 and Proposition 4.3.5]{SS2} since $\FI$ is known to be quasi-Gr\"{o}bner. The authors were also notified by Casto that in \cite{Casto} he had independently proved Theorem \ref{relative projective complexes} over fields of characteristic 0 and Theorem \ref{representation stability property} of this paper, and described quite a few interesting applications of these results on arithmetics and geometry. The authors thank them for the discussions, comments, and communications.

\section{Preliminary results}

Throughout this section let $k$ be a unital commutative ring, $\C$ be the skeletal subcategory of $\FI^m$ with objects parameterized by $\bfn \in \Nm$, and $V$ be a $\C$-module.

\subsection{Some combinatorics}

Recall that objects in $\C$ are elements $\bfn =(n_1, \, \ldots, \, n_m) \in \Nm$ (or more precisely, objects are parameterized by $\bfn \in \Nm$), and morphisms from object $\bfn$ to $\bft$ are maps $\bff = (f_1, \, \ldots, \, f_m)$ such that each $f_i: [n_i] \to [t_i]$ is an injection, $i \in [m]$. Define $\bfn \leqslant \bft$ if the set $\C (\bfn, \bft)$ of morphisms is nonempty. This is a well defined partial order on the set of objects in $\C$ and is compatible with the usual order on $\Nm$; that is, $\bfn \leqslant \bft$ if and only if $n_i \leqslant t_i$ for all $i \in [m]$.

We describe some combinatorial properties of $\C$, most of which can be easily verified from the corresponding results of $\FI$.

\begin{lemma} \label{basic combinatorics}
Let $\bfn$, $\bft$ and $\bfl$ be three distinct objects in $\C$. One has:
\begin{enumerate}
\item If $\bfn \leqslant \bft$, then the endomorphism group $S_{\bft}$ acts transitively on $\C (\bfn, \bft)$ from the left and the endomorphism group $S_{\bfn}$ acts freely on $\C(\bfn, \bft)$ from the right.
\item If $\bff$ and $\bfg$ are two distinct morphisms in $\C(\bfn, \bft)$ and $\bfh \in \C (\bft, \bfl)$, then $\bfh \circ \bff \neq \bfh \circ \bfg$.
\item For any sequence of objects $\bfn \leqslant \bfn_1 \leqslant \bfn_2 \leqslant \ldots \leqslant \bfn_s \leqslant \bft$, one has
\begin{equation*}
\C (\bfn_s, \bft) \circ \C (\bfn_{s-1}, \bfn_s) \circ \ldots \circ \C(\bfn_1, \bfn_2) \circ \C (\bfn, \bfn_1) = \C (\bfn, \bft).
\end{equation*}
\end{enumerate}
\end{lemma}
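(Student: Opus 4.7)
The plan is to reduce everything to the corresponding statements for $\FI$ itself (the $m=1$ case) by exploiting the fact that $\C(\bfn,\bft) = \prod_{i=1}^m \C_1([n_i],[t_i])$, where $\C_1$ denotes the skeleton of $\FI$, and $S_{\bfn} = \prod_{i=1}^m S_{n_i}$ acts componentwise. A product of transitive (respectively free) actions is transitive (respectively free), and equality of tuples reduces to coordinatewise equality, so each of the three statements will follow once I verify it componentwise. I would state this reduction once at the start and then argue only about a single factor $[n] \to [t]$.

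For statement (1), given two injections $f,g \colon [n]\hookrightarrow [t]$, I would construct $\sigma\in S_t$ explicitly by setting $\sigma(f(j)) = g(j)$ on the image of $f$ and extending $\sigma$ to any bijection between the two complements $[t]\setminus f([n])$ and $[t]\setminus g([n])$, both of size $t-n$; this witnesses transitivity from the left. Freeness from the right is the standard observation that if $f\sigma = f$ for an injection $f$ and $\sigma\in S_n$, then injectivity of $f$ forces $\sigma=\mathrm{id}$. Statement (2) is simply the fact that injections are monomorphisms in $\mathrm{Set}$: from $h_i\circ f_i = h_i\circ g_i$ and $h_i$ injective, left-cancel to get $f_i=g_i$ for every $i$, hence $\bff=\bfg$.

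Statement (3) I would handle by induction on $s$, where the base case $s=0$ is trivial and the general case reduces via the induction hypothesis to the single-step claim: for any $\bfn\leqslant \bfn_1\leqslant \bft$, every $\bfh\in \C(\bfn,\bft)$ factors as $\bfh_2\circ \bfh_1$ with $\bfh_1\in \C(\bfn,\bfn_1)$ and $\bfh_2\in \C(\bfn_1,\bft)$. Working in one coordinate, I would take $\bfh_1$ to be the standard inclusion $[n_i]\hookrightarrow [n_{1,i}]$ sending $j\mapsto j$, and then define $\bfh_2\colon [n_{1,i}]\to [t_i]$ by $\bfh_2(j) = h_i(j)$ for $j\in[n_i]$ and by extending $\{n_i+1,\ldots,n_{1,i}\}$ via any injection into the complement $[t_i]\setminus h_i([n_i])$; this extension exists because $t_i-n_i \geqslant n_{1,i}-n_i$. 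By construction $\bfh_2\circ \bfh_1 = \bfh$ in that coordinate, and doing this in parallel across all coordinates produces the desired factorization.

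The proof is essentially bookkeeping, and I do not anticipate a genuine obstacle; the only point that requires a moment of care is the existence of the extension in statement (3), which hinges on the inequality $t_i - n_i \geqslant n_{1,i} - n_i$ that is built into the hypothesis $\bfn_1\leqslant \bft$. Once the componentwise reduction is stated cleanly, each of the three parts is a one-line verification.
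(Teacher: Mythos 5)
Your proposal is correct and uses the same strategy as the paper: reduce each claim componentwise to the corresponding standard fact for $\FI$, observing that $\C(\bfn,\bft)$ is a product of $\FI$-morphism sets with $S_{\bfn}$ acting coordinatewise. The paper spells out only statement (2) this way and cites the $\FI$ analogues for the rest, while you supply the elementary single-coordinate verifications in full, but the reduction is identical.
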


\begin{proof}
As claimed, these statements can be deduced from the corresponding results of $\FI$. We give a proof for the second statement as an explanation. Since $\bff \neq \bfg$, we know that there exists an $i \in [m]$ such that $f_i: [n_i] \to [t_i]$ is different from $g_i: [n_i] \to [t_i]$. Therefore, by the corresponding result of $\FI$, one knows that $h_i \circ g_i \neq h_i \circ f_i$. Consequently, $\bfh \circ \bff \neq \bfh \circ \bfg$.
\end{proof}

An important combinatorial property of $\C$ is the existence of self-embedding functors of degree 1. For $i \in [m]$, we define a functor $\iota_i: \C \to \C$ as follows. For $\bfn \in \Nm$, one has
\begin{equation*}
\bfn = ([n_1], \, \ldots, \, [n_{i-1}], \, [n_i], \, [n_{i+1}], \, \ldots, \, [n_m]) \mapsto ([n_1], \, \ldots, \, [n_{i-1}], \, [n_i + 1], \, [n_{i+1}], \, \ldots, \, [n_m]) = \bfn + \bfo_i.
\end{equation*}
For a map
\begin{equation*}
\bff = (f_1, \, \ldots, \, f_m): \bfn \to \bft,
\end{equation*}
the $j$-th component $(\iota_i(\bff))_j$ of the map
\begin{equation*}
\iota_i (\bff): \bfn + \bfo_i \to \bft + \bfo_i
\end{equation*}
coincides with $f_j$ for $j \neq i$, while the $i$-th component $(\iota_i (\bff))_i: [n_i+1] \to [t_i+1]$ is defined by
\begin{equation} \label{iota}
t \mapsto \begin{cases}
1, & 1 = t \in [n_i+1];\\
f_i(t-1) + 1, & 1 \neq t \in [n_i+1].
\end{cases}
\end{equation}
The reader can check that each $\iota_i$ is a faithful functor.

\begin{lemma} \label{self-embedding functors commute}
For $i, j \in [m]$, one has $\iota_i \circ \iota_j = \iota_j \circ \iota_i$.
\end{lemma}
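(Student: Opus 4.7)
The plan is to verify the functor equality $\iota_i \circ \iota_j = \iota_j \circ \iota_i$ directly on objects and morphisms, exploiting the fact that $\iota_i$ and $\iota_j$ act nontrivially only on the $i$-th and $j$-th coordinates respectively.

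First I would dispose of the case $i = j$, which is vacuous. So assume $i \neq j$. On objects, the identity is just $\bfn + \bfo_j + \bfo_i = \bfn + \bfo_i + \bfo_j$, i.e., commutativity of addition in $\Nm$. This also confirms that the two composite functors agree on the source and target objects of any morphism, so it makes sense to compare their values on morphisms.

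Next I would unpack what each composite does to a morphism $\bff = (f_1, \ldots, f_m) \in \C(\bfn, \bft)$. By the recipe in \eqref{iota}, $\iota_j(\bff)$ leaves the $l$-th component equal to $f_l$ for every $l \neq j$, while replacing the $j$-th component by the map $[n_j+1] \to [t_j+1]$ sending $1 \mapsto 1$ and $s \mapsto f_j(s-1)+1$ for $s > 1$. Applying $\iota_i$ to this (with $i \neq j$) then leaves the $l$-th component unchanged for $l \neq i$ (in particular the already-altered $j$-th component is untouched) and modifies the $i$-th component $f_i$ to $[n_i+1] \to [t_i+1]$ via the same rule. Hence both the $i$-th and the $j$-th components of $\iota_i \iota_j(\bff)$ are the "shift-by-one" modifications of $f_i$ and $f_j$, while all other components remain equal to the original $f_l$. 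Running the same computation for $\iota_j \iota_i(\bff)$ yields exactly the same description, componentwise.

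There is essentially no obstacle here: the key observation is that the definition of $\iota_i$ in \eqref{iota} only touches the $i$-th component, so when $i \neq j$ the operations commute trivially. The only bookkeeping to do is to make sure the formulas for the altered components are genuinely independent of the order in which $\iota_i$ and $\iota_j$ are applied, which they are, as each only sees the component it is modifying. Therefore $\iota_i \circ \iota_j$ and $\iota_j \circ \iota_i$ agree on both objects and morphisms, completing the proof.
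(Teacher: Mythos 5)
Your proof is correct and follows essentially the same route as the paper: verify the equality on objects via commutativity of addition in $\Nm$, then observe that on a morphism each of $\iota_i$, $\iota_j$ modifies only its own component via the rule in \eqref{iota}, so the two composites produce the same componentwise tuple. The paper simply assumes $i<j$ without loss of generality rather than separately noting the vacuous case $i=j$, but the argument is identical.
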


\begin{proof}
It follows directly from the definition. Without loss of generality we assume that $i < j$. For an object $\bfn$ in $\C$, one has
\begin{equation*}
\iota_i (\iota_j (\bfn)) = \bfn + \bfo_j + \bfo_i = \bfn + \bfo_i + \bfo_j = \iota_j (\iota_i (\bfn)).
\end{equation*}
For a morphism $\bff: \bfn \to \bft$, both $\iota_i (\iota_j (\bff))$ and $\iota_j (\iota_i (\bff))$ equal
\begin{equation*}
(f_1, \, \ldots, \, f_{i-1}, \, (\iota_i(\bff))_i, \, f_{i+1}, \, \ldots, \, f_{j-1}, \, (\iota_j(\bff))_j, \, f_{j+1}, \, \ldots, \, f_m).
\end{equation*}
\end{proof}

For each $\iota_i$, there is a family of inclusions
\begin{equation} \label{pi}
\{ \pi_{\bfn, i}: \bfn = ([n_1], \, \ldots, \, [n_i], \, \ldots, \, [n_m]) \to ([n_1], \, \ldots, \, [n_i + 1], \, \ldots, \, [n_m]) = \bfn + \bfo_i \mid \bfn \in \Nm \}
\end{equation}
such that the $j$-th component of $\pi_{\bfn, i}$ is the identity map for $j \neq i$, and the $i$-th component of $\pi_{\bfn, i}$ maps $t \in [n_i]$ to $t+1 \in [n_i + 1]$. This collection of maps gives a natural transformation $\pi_i$ between the identity functor $\mathrm{Id}_{\C}$ and the self-embedding functor $\iota_i$.

\subsection{Shift functors}

Each self-embedding functor $\iota_i$, $i \in [m]$, induces a pull-back functor $\Sigma_i: \C \Mod \to \C \Mod$ by sending $V$ to $V \circ \iota_i$. This is an exact functor called the $i$-th \emph{shift functor}. The natural transformations $\pi_i$, $i \in [m]$, induce natural transformations $\pi_i^{\ast}$ between the identity functor on $\C \Mod$ and $\Sigma_i$. Consequently, there is a natural map $V \to \Sigma_i V$ for each $i \in [m]$.

Let $\bfs$ be the direct sum of those pull-back functors $\Sigma_i$, which is exact as well. By taking direct sum, we obtain a natural map $V^{\oplus m} \to \bfs V$. The \emph{derivative functor} $\bfd$ of $\bfs$ is defined to be the cokernel of the natural map $V^{\oplus m} \to \bfs V$. We also define $\bfk V$ to be the kernel of this natural map. With these definitions, we get an exact sequence
\begin{equation*}
0 \to \bfk V \to V^{\oplus m} \to \bfs V \to \bfd V \to 0.
\end{equation*}
Note that both $\bfk$ and $\bfd$ are also direct sums of components. Explicitly, for each $i \in [m]$, one has an exact sequence
\begin{equation*}
0 \to K_iV \to V \to \Sigma_i V \to D_i V \to 0
\end{equation*}
which is precisely the $i$-th component of the previous one.

We list certain properties of these functors.

\begin{lemma} \label{basic properties of sigma}
Let $V$ be a $\C$-module. Then one has:
\begin{enumerate}
\item $\Sigma_i M(\bfn) \cong M(\bfn) \oplus M(\bfn - \bfo_i)^{\oplus n_i}$.
\item $D_i M(\bfn) \cong M(\bfn - \bfo_i)^{\oplus n_i}$.
\item $\gd(\bfd V) \leqslant \gd(\bfs V) \leqslant \gd(V) = \gd(\bfd V) + 1$ whenever $V$ is nonzero.
\item For $i, j \in [m]$, $\Sigma_i \circ \Sigma_j = \Sigma_j \circ \Sigma_j$.
\item For $i, j \in [m]$, $\Sigma_i \circ D_j \cong D_j \circ \Sigma_i$, and in particular, $\bfs \circ \bfd \cong \bfd \circ \bfs$.
\end{enumerate}
\end{lemma}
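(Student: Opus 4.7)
The plan is to handle the five claims in order; (1) and (2) produce the combinatorial formulas on which (3) relies, while (4) and (5) reduce quickly to Lemma \ref{self-embedding functors commute} and exactness of $\Sigma_i$.

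For (1), I would evaluate $(\Sigma_i M(\bfn))_{\bft} = k \C(\bfn, \bft + \bfo_i)$ and partition the morphism set $\C(\bfn, \bft + \bfo_i)$ according to whether $1 \in [t_i + 1]$ lies in the image of $f_i$. When $1 \notin \mathrm{im}(f_i)$, the map $f_i$ factors uniquely through $\pi_{\bft, i}: [t_i] \hookrightarrow [t_i + 1]$, yielding a bijection with $\C(\bfn, \bft)$; when $1 \in \mathrm{im}(f_i)$, recording the preimage $j = f_i^{-1}(1) \in [n_i]$ and restricting the remaining components to $\bfn - \bfo_i$ yields a bijection with $[n_i] \times \C(\bfn - \bfo_i, \bft)$. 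Naturality in $\bft$ is routine, and $k$-linearization produces the decomposition. For (2), the natural map $M(\bfn) \to \Sigma_i M(\bfn)$ is by construction post-composition with $\pi_{\bft, i}$, which lands precisely in the first summand of the decomposition in (1); Lemma \ref{basic combinatorics}(2) makes it injective, so $K_i M(\bfn) = 0$ and the cokernel is the complementary summand $M(\bfn - \bfo_i)^{\oplus n_i}$.

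For (3), the inequality $\gd(\bfd V) \leqslant \gd(\bfs V)$ is immediate since $\bfd V$ is a quotient of $\bfs V$. Fix a surjection $\bigoplus_k M(\bfn_k) \twoheadrightarrow V$ with $\deg(\bfn_k) \leqslant \gd(V)$; applying the exact functor $\bfs$ and using (1) gives $\gd(\bfs V) \leqslant \gd(V)$, and applying the right-exact $\bfd$ and using (2) gives $\gd(\bfd V) \leqslant \gd(V) - 1$. The reverse inequality $\gd(V) \leqslant \gd(\bfd V) + 1$ is the point where I expect to spend the most care. My approach is elementwise: if $\gd(V) = N \geqslant 1$, choose $\bft$ with $\deg(\bft) = N$ and some $v \in V_{\bft}$ not lying in $\sum_{\bff: \bfn \to \bft, \, \bfn < \bft} \mathrm{im}\, V(\bff)$. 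For any $i$ with $t_i \geqslant 1$, the map $\pi_{\bft - \bfo_i, i}$ is one particular morphism from a strictly smaller object, so $v \notin V(\pi_{\bft - \bfo_i, i})(V_{\bft - \bfo_i})$; hence $(D_i V)_{\bft - \bfo_i} \neq 0$ and $\gd(\bfd V) \geqslant N - 1$. The edge case $N = 0$ is handled by the already-established bound $\gd(\bfd V) \leqslant -1$, which forces $\bfd V = 0$ and matches $\gd(\bfd V) + 1 = 0$.

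Finally, (4) follows from $\Sigma_i \Sigma_j V = V \circ \iota_j \circ \iota_i = V \circ \iota_i \circ \iota_j = \Sigma_j \Sigma_i V$ by Lemma \ref{self-embedding functors commute}. For (5), since $\Sigma_i$ is exact and commutes with $\Sigma_j$ by (4), it commutes with the cokernel construction defining $D_j$, yielding $\Sigma_i D_j \cong D_j \Sigma_i$ functorially in $V$; summing over $j \in [m]$ gives $\bfs \bfd \cong \bfd \bfs$. The only genuine obstacle I foresee is the lower-bound direction in (3); everything else is either a direct computation or a formal consequence of the preceding lemmas.
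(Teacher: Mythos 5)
Your decompositions in (1)–(2) are a direct combinatorial argument (partitioning $\C(\bfn, \bft + \bfo_i)$ by whether $1$ is in the image of the $i$-th component), whereas the paper reduces to the $m = 1$ case by writing $M(\bfn)$ as an external tensor product $M(n_1) \boxtimes \cdots \boxtimes M(n_m)$ and invoking the known $\FI$-decomposition of $\Sigma M(n_i)$; both are correct, yours being more self-contained and the paper's shorter. Items (4) and the upper bounds in (3) agree with the paper.

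For the lower bound $\gd(V) \leqslant \gd(\bfd V) + 1$ in (3), the paper passes to the quotient $V/V'$ concentrated in degrees $\geqslant \bfn$, so that $\bfd(V/V')$ vanishes strictly below degree $\gd(V)-1$ and a nonzero value there is automatically a generator. Your element-wise argument chooses $v \in V_\bft$ with nonzero image in $(H_0 V)_\bft$, but the deduction \emph{``$(D_i V)_{\bft - \bfo_i} \neq 0$, hence $\gd(\bfd V) \geqslant N-1$''} is a non sequitur as written: a module can take nonzero values in high degree while being generated in low degree. The extra property of your $v$ (not in the image of \emph{any} morphism from a smaller object, not merely $\pi_{\bft - \bfo_i,i}$) does rescue the argument — if $[v]$ in $D_iV$ were generated by $(D_i V)_{\bfs}$ with $\bfs < \bft - \bfo_i$, lifting along $\iota_i$ would express $v$ modulo $\pi_{\bft-\bfo_i,i}\cdot V_{\bft-\bfo_i}$ as a combination of images from $\bfs + \bfo_i < \bft$, contradicting your choice. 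You should spell out that $v$ gives a nonzero class in $(H_0(D_i V))_{\bft-\bfo_i}$, not merely a nonzero element of $(D_iV)_{\bft-\bfo_i}$.

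The genuine gap is in (5). Exactness of $\Sigma_i$ gives $\Sigma_i D_j V = \operatorname{coker}(\Sigma_i \pi_j^\ast\colon \Sigma_i V \to \Sigma_i\Sigma_j V)$, while $D_j\Sigma_i V = \operatorname{coker}(\pi_j^\ast\colon \Sigma_i V \to \Sigma_j\Sigma_i V)$. Commutativity $\Sigma_i\Sigma_j = \Sigma_j\Sigma_i$ identifies the targets, but it does \emph{not} identify the two maps out of $\Sigma_i V$; that is an additional compatibility one must verify. On degree $\bfn$ the first map is $v \mapsto \pi_{\bfn+\bfo_i,\,j}\cdot v$ and the second is $v \mapsto \iota_i(\pi_{\bfn,\,j})\cdot v$. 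When $i \neq j$ these morphisms of $\C$ coincide and the cokernels are literally equal. When $i = j$ they differ: one sends $1 \mapsto 2$, the other sends $1 \mapsto 1$, so they differ by the transposition $(1\,2) \in S_{n_i+2}$ in the $i$-th factor — an automorphism of the target object, hence the cokernels are isomorphic but not equal, and this isomorphism must be exhibited. This is exactly what the paper's proof of (5) does by splitting into the cases $i = j$ and $i \neq j$; your ``commutes with the cokernel construction'' elides the step that is the actual content of the claim.
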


\begin{proof}
(1): The first statement follows from the $m = 1$ case. Indeed, for an object $\bfn$, the free module $M(\bfn)$ is the external tensor product of free $\FI$-modules. Explicitly,
\begin{equation*}
M(\bfn) \cong M(n_1) \boxtimes \ldots \boxtimes M(n_m),
\end{equation*}
where $M(n_i)$ is the free $\FI$-module associated with object $[n_i]$ in $\FI$. Now we have
\begin{equation*}
\Sigma_i M(\bfn) = M(n_1) \boxtimes \ldots \boxtimes (\Sigma M(n_i)) \boxtimes \ldots \boxtimes M(n_m)
\end{equation*}
and
\begin{equation*}
\Sigma M(n_i) \cong M(n_i) \oplus M(n_i-1)^{\oplus n_i},
\end{equation*}
where $\Sigma$ is the shift functor of $\FI$-modules defined in \cite{CEF}. The conclusion follows.

(2): Immediately follows from the previous statement.

(3): Take a surjection $P \to V \to 0$ such that $P$ is a direct sum of free $\C$-modules with $\gd(P) = \gd(V)$. Applying the shift functor one gets a surjection $\bfs P \to \bfs V \to 0$. Since $\gd(\bfd P) = \gd(P)$ by the Statement (1) of this lemma, one deduces that $\gd(\bfs V) \leqslant \gd(V)$. Since $\bfd V$ is a quotient module of $\bfs V$, one also has $\gd(\bfs V) \geqslant \gd(\bfd V)$.

Now we show that $\gd(\bfd V) = \gd(\bfd(V)) + 1$ whenever $V \neq 0$, the proof of which is similar to that of \cite[Lemma 1.5]{L3} and \cite[Proposition 2.4]{LY}. As explained in \cite[Lemma 1.5]{L3}, we only need to deal with the case that $\gd(V)$ is finite. Since the conclusion holds for $V$ with $\gd(V) = 0$, we assume that $\gd(V) > 0$. From the surjection $\bfd P \to \bfd V \to 0$ one deduces that $\gd(\bfd V) \leqslant \gd(\bfd P) = \gd(P) -1$.

On the other hand, there is an object $\bfn$ such that $\deg(\bfn) = \gd(V) \geqslant 1$ and $(H_0(V))_{\bfn} \neq 0$. Now let $V'$ be the submodule of $V$ generated by $V_{\bft}$ with $\bft \leqslant \bfn$ and $\bft \neq \bfn$. Then $V'$ is a proper submodule of $V$ since otherwise one should have $(H_0(V))_{\bfn} = 0$. The surjection $V \to V/V' \to 0$ induces a surjection $\bfd V \to \bfd (V/V') \to 0$, and hence $\gd(\bfd V) \geqslant \gd(\bfd(V/V'))$. Note that the value of $\bfd(V/V')$ on an object $\bft$ with $\deg(\bft) = \deg(\bfn) - 1$ is nonzero, and the values of $\bfd(V/V')$ on all objects smaller than $\bft$ is 0. Therefore, $\gd(\bfd V) \geqslant \gd(\bfd(V/V')) \geqslant \deg{\bft} = \gd(V) - 1$.

(4): Immediately follows from Lemma \ref{self-embedding functors commute}.

(5): There are two cases:

When $i = j$, by the definition,
\begin{align*}
(\Sigma_i D_i V)_{\bfn} & = V_{\bfn + \mathbf{2}_i} / \pi_{\bfn + \bfo_i, i} \cdot V_{\bfn + \bfo_i};\\
(D_i \Sigma_i V)_{\bfn} & = V_{\bfn + \mathbf{2}_i} / \iota_i (\pi_{\bfn, i}) \cdot V_{\bfn + \bfo_i}
\end{align*}
where $\iota_i$ and $\pi_{\bfn, i}$ are defined in (\ref{iota}) and (\ref{pi}) respectively. By these definitions, both $\pi_{\bfn + \bfo_i, i}$ and $\iota_i (\pi_{\bfn, i})$ are maps:
\begin{equation*}
\bfn + \bfo_i = ([n_1], \, \ldots, \, [n_i + 1], \, \ldots, \, [n_m]) \to ([n_1], \, \ldots, \, [n_i + 2], \, \ldots, \, [n_m]) = \bfn + \mathbf{2}_i.
\end{equation*}
Moreover, for $j \neq i$, their $j$-th components are all identity maps. While their $i$-th components are
\begin{align*}
[n_i + 1] \to [n_i + 2], \quad & t \mapsto t+1;\\
[n_i + 1] \to [n_i + 2], \quad & t \mapsto
\begin{cases}
1, & t = 1;\\
t+1, & 2 \leqslant t \leqslant n_i+1.
\end{cases}
\end{align*}
respectively. Now the reader can see that the family of bijections
\begin{equation*}
(e_1, \, \ldots, \, e_{i-1}, \, g, \, e_{i+1}, \, \ldots, \, e_m) \in S_{\bfn + \mathbf{2}_i},
\end{equation*}
where $e_j$ is the identity element in $S_j$, and $g \in S_{n_i + 2}$ permutes $1$ and $2$ and fixes all other elements in $[n_i + 2]$, gives an isomorphism between $\Sigma_i D_i$ and $D_i \Sigma_i$.

When $i \neq j$, by the definition,
\begin{align*}
(\Sigma_i D_j V)_{\bfn} & = (D_j V)_{\bfn + \bfo_i} = V_{\bfn + \bfo_i + \bfo_j} / \pi_{\bfn + \bfo_i, j} \cdot V_{\bfn + \bfo_i};\\
(D_j \Sigma_i V)_{\bfn} & = V_{\bfn + \bfo_i + \bfo_j} / \iota_i (\pi_{\bfn, j}) \cdot V_{\bfn + \bfo_i}
\end{align*}
where $\iota_i$ and $\pi_{\bfn, i}$ are defined in (\ref{iota}) and (\ref{pi}) respectively. Now it is routine to check that $\pi_{\bfn + \bfo_i, i}$ and $\iota_i (\pi_{\bfn, i})$ are the same map from $\bfn + \bfo_i$ to $\bfn + \bfo_i + \bfo_j$. Explicitly, for $s \neq j$, the $s$-th component of this map is the identity, while its $j$-th component sends elements in $[n_j]$ to $[n_j+1]$ by adding 1. Consequently, in this case $D_j \Sigma_i = \Sigma_i D_j$.
\end{proof}

Although $\bfs$ is a direct sum of $\Sigma_i$ for $i \in [m]$, in general we do not have a good relationship between homological degrees of $V$ and those of $\Sigma_i V$ and $D_i V$, except that $\gd(D_iV) < \gd(V)$ and $\gd(D_i V) \leqslant \gd(\Sigma_iV) \leqslant \gd(V)$ always hold. Here is an example.

\begin{example} \normalfont
Let $m = 2$, $s$ be a positive integer, and $V$ be a $\C$-module satisfying the following conditions: $\gd(V) = s$, and $V_{\bfn} = 0$ whenever $n_2 > 0$. Then $\Sigma_2 V = D_2 V = 0$, so $\gd(V) - \gd(\Sigma_2 V) = s + 1$ can be as large as we want.
\end{example}

However, under a certain weak condition, we can still obtain a good control on the difference $\gd(V) - \gd(D_i V)$ for $i \in [m]$.

\begin{lemma} \label{a special case}
Let $V$ be a nonzero $\C$-module such that $V_{\bfn} = 0$ for all objects $\bfn$ satisfying $n_i = 0$. Then
\begin{equation*}
\gd(D_iV) \leqslant \gd(\Sigma_i V) \leqslant \gd(V) = \gd(D_i V) + 1.
\end{equation*}
\end{lemma}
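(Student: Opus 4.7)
The plan is to mimic the proof of Statement (3) of Lemma \ref{basic properties of sigma}, applied coordinate-wise, with the hypothesis used to preclude the degeneracy illustrated in the preceding example. The first two inequalities $\gd(D_iV) \leqslant \gd(\Sigma_i V) \leqslant \gd(V)$ hold in general: $D_iV$ is a quotient of $\Sigma_i V$ by construction, and for any free cover $P = \bigoplus_\alpha M(\bfn_\alpha)^{\oplus a_\alpha} \twoheadrightarrow V$ with $\gd(P) = \gd(V)$, the exactness of $\Sigma_i$ together with Statement (1) of Lemma \ref{basic properties of sigma} gives $\gd(\Sigma_i V) \leqslant \gd(\Sigma_i P) = \gd(P) = \gd(V)$.

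It remains to prove $\gd(V) = \gd(D_iV) + 1$. For the upper bound $\gd(D_iV) \leqslant \gd(V) - 1$, note that under the hypothesis $V$ is generated only at positions $\bfn$ with $n_i \geqslant 1$, so we may arrange the free cover above so that every summand $M(\bfn_\alpha)$ satisfies $(n_\alpha)_i \geqslant 1$. Statement (2) of Lemma \ref{basic properties of sigma} then yields $D_i M(\bfn_\alpha) \cong M(\bfn_\alpha - \bfo_i)^{\oplus (n_\alpha)_i}$, whence $\gd(D_iP) = \gd(P) - 1$, and right-exactness of $D_i$ transfers this bound to $D_iV$. For the matching lower bound $\gd(D_iV) \geqslant \gd(V) - 1$, we may assume $\gd(V)$ is finite as in the proof of Statement (3) of Lemma \ref{basic properties of sigma}. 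Pick an object $\bfn$ with $\deg(\bfn) = \gd(V)$ and $(H_0(V))_{\bfn} \neq 0$; by hypothesis $n_i \geqslant 1$. Let $V' \subseteq V$ be the submodule generated by all $V_{\bft}$ with $\bft < \bfn$, and set $W = V/V'$, so that $W_{\bft} = 0$ for every $\bft < \bfn$ while $W_{\bfn} \neq 0$. For any $\bfl < \bfn - \bfo_i$ one has $\bfl + \bfo_i < \bfn$, so $W_{\bfl + \bfo_i} = 0$ and hence $(D_iW)_{\bfl} = 0$; on the other hand, $(D_iW)_{\bfn - \bfo_i} = W_{\bfn}/\pi_{\bfn - \bfo_i, i} \cdot W_{\bfn - \bfo_i} = W_{\bfn} \neq 0$. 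Therefore $(H_0(D_iW))_{\bfn - \bfo_i} \neq 0$, so $\gd(D_iW) \geqslant \gd(V) - 1$, and the surjection $D_iV \twoheadrightarrow D_iW$ yields the desired inequality.

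The main subtlety, rather than a genuine obstacle, is tracking precisely where the hypothesis $V_{\bfn} = 0$ for $n_i = 0$ enters. It is used in exactly two places: to ensure that the free cover can be chosen with every summand satisfying $(n_\alpha)_i \geqslant 1$ (so that $D_i$ genuinely drops the generating degree by one rather than annihilating a summand outright), and to guarantee $n_i \geqslant 1$ at the distinguished generator $\bfn$ so that $\bfn - \bfo_i$ is a well-defined object of $\C$. Without this hypothesis the argument collapses, as the preceding example shows.
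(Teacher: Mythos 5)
Your proposal is correct and follows the same route as the paper: the paper's own proof simply observes that $\gd(V)\geqslant 1$ and instructs the reader to repeat the argument of Statement (3) of Lemma \ref{basic properties of sigma}, noting that the hypothesis forces $n_i\geqslant 1$ at the chosen $\bfn$ so one may take $\bft=\bfn-\bfo_i$. You have written out precisely that argument coordinate-wise, correctly identifying both points where the hypothesis is used.
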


\begin{proof}
From the given condition we know that $\gd(V) \geqslant 1$. Now one can copy the proof of Statement (3) of Lemma \ref{basic properties of sigma}, noting that in that argument the object $\bfn$ satisfies $n_i \geqslant 1$, so one can choose $\bft = \bfn - \bfo_i$.
\end{proof}

\subsection{Torsion modules}

Recall that an element $v \in V_{\bfn}$ for a certain object $\bfn$ is a torsion element if there exist another object $\bft$ and an injection $\bff \in \C (\bfn, \bft)$ such that $\bff \cdot v = 0$. Let $V_T$ be the submodule of $V$ generated by all these torsion elements, and denote $V_F = V/V_T$. We say that $V$ is \emph{torsion} if $V = V_T$. If $V_T = 0$, $V$ is called \emph{torsion free}. In particular, all free modules $M (\bfn)$ are torsion free by the second statement of Lemma \ref{basic combinatorics}.

\begin{lemma} \label{torsion modules}
Let $V$ be a $\C$-module. Then:
\begin{enumerate}
\item $V$ is a torsion module if and only if for very object $\bfn$ and every $v \in V_{\bfn}$, $v$ is a torsion element. In particular, submodules and quotient modules of torsion modules are still torsion.
\item $K_iV = \bigoplus _{\bfn \in \Nm} \{v \in V_{\bfn} \mid \pi_{\bfn, i} \cdot v = 0 \} = \bigoplus _{\bfn \in \Nm} \{ v \in V_{\bfn} \mid \bff \cdot v  = 0, \, \forall \bff \in \C (\bfn, \bfn + \bfo_i) \}$.
\item $V$ is torsion free if and only if $\bfk V = 0$.
\item If $V$ is torsion free, so is $\bfs V$.
\item In a short exact sequence $0 \to U \to V \to W \to 0$, if both $U$ and $W$ are torsion free, so is $V$.
\item In a short exact sequence $0 \to U \to V \to W \to 0$, if $W$ is torsion free, then the sequence $0 \to \bfd U \to \bfd V \to \bfd W \to 0$ is exact as well.
\item If $V$ is torsion, so is $\Sigma_i V$ for $i \in [m]$.
\end{enumerate}
\end{lemma}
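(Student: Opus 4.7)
The plan is to establish the seven claims in the order given, since several depend on earlier parts. For (1), I would show that the torsion elements of $V$ form a $\C$-submodule: closure under the action uses a componentwise amalgamation argument in $\FI^m$ (if $\bff \in \C(\bfn, \bfl)$ kills $v$ and $\bfh \in \C(\bfn, \bft)$, pushouts in each coordinate of $\FI$ produce $\bfg$ and $\bfk$ with $\bfg \bfh = \bfk \bff$, so $\bfg (\bfh v) = \bfk (\bff v) = 0$), and closure under addition is similar, extending two annihilating morphisms to a common target. Then $V_T$ consists precisely of the torsion elements, and the stated equivalence together with inheritance by submodules and quotient modules is immediate. For (2), by construction $K_i V$ is the kernel of the natural map $V \to \Sigma_i V$, which at $\bfn$ is left multiplication by $\pi_{\bfn, i}$; by Lemma \ref{basic combinatorics}(1) every morphism in $\C(\bfn, \bfn + \bfo_i)$ factors as $g \pi_{\bfn, i}$ with $g \in S_{\bfn + \bfo_i}$ invertible, so the two descriptions coincide. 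Statement (3) follows: one direction is immediate from (2); for the other, decompose any positive-degree $\bff$ killing a torsion element $v$ into degree-$1$ steps via Lemma \ref{basic combinatorics}(3), and at the first step that annihilates the running image, apply (2) to place a nonzero element into the relevant $K_i V$.

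Parts (4) and (5) are direct. For (4), any torsion $w \in (\Sigma_i V)_{\bfn} = V_{\bfn + \bfo_i}$ witnessed by a positive-degree $\bff$ on $\Sigma_i V$ is killed in $V$ by $\iota_i(\bff)$, which also has positive degree, contradicting torsion-freeness of $V$. For (5), the image in $W$ of a torsion $v \in V$ is torsion and hence zero, so $v$ lies in $U$ and is torsion there, forcing $v = 0$. For (6), I would apply the snake lemma to the commutative diagram with exact rows
\begin{equation*}
\begin{array}{ccccccccc}
0 & \to & U^{\oplus m} & \to & V^{\oplus m} & \to & W^{\oplus m} & \to & 0 \\
& & \downarrow & & \downarrow & & \downarrow & & \\
0 & \to & \bfs U & \to & \bfs V & \to & \bfs W & \to & 0
\end{array}
\end{equation*}
(exactness of the bottom row follows from exactness of $\bfs$), obtaining the six-term sequence $0 \to \bfk U \to \bfk V \to \bfk W \to \bfd U \to \bfd V \to \bfd W \to 0$; since $\bfk W = 0$ by (3), the connecting map vanishes and $\bfd U \to \bfd V$ is injective.

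The most delicate part is (7). Given $w \in (\Sigma_i V)_{\bfn} = V_{\bfn + \bfo_i}$, torsion-ness of $V$ produces a positive-degree $\bff: \bfn + \bfo_i \to \bft$ with $\bff w = 0$ in $V$, and I need to convert this into a positive-degree $\bfh: \bfn \to \bfn'$ with $\iota_i(\bfh) w = 0$, since the $\C$-action on $\Sigma_i V$ is by definition through $\iota_i$. The trick is to postcompose $\bff$ with a permutation in $S_{\bft}$ moving $f_i(1)$ back to $1$, so that $\bfg \bff$ lies in the image of $\iota_i$: explicitly, choose $g_i \in S_{t_i}$ with $g_i(f_i(1)) = 1$, let $\bfg \in S_{\bft}$ be $g_i$ in the $i$-th coordinate and the identity elsewhere, and then $\bfg \bff = \iota_i(\bfh)$ for the morphism $\bfh: \bfn \to \bft - \bfo_i$ with $h_j = f_j$ for $j \neq i$ and $h_i(s) = g_i(f_i(s+1)) - 1$, which is injective because $g_i f_i$ is. Then $\iota_i(\bfh) w = \bfg \bff w = 0$, and $\bfh$ has positive degree because $\bft \neq \bfn + \bfo_i$, so $w$ is torsion in $\Sigma_i V$.
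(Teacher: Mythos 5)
Your proposal is correct, and its overall architecture matches the paper's, but several individual steps are argued by more elementary element-level reasoning where the paper relies on earlier functor-level lemmas. Parts (2), (3), and (6) are essentially identical to the paper's arguments (in (6) you just write out the snake-lemma diagram that the paper refers back to from its proof of (5)). The genuinely different steps are: in (1), you extend two annihilating morphisms to a common target via a componentwise amalgamation, where the paper appeals to the transitivity of the $S_{\bfl}$-action from Lemma \ref{basic combinatorics}(1) to conclude that \emph{every} morphism in $\C(\bfn,\bfl)$ kills a torsion $v$; you also explicitly flag closure under addition, which the paper treats as implicit. (A small caveat: ``pushout'' is slightly imprecise terminology, since $\FI^m$ does not have genuine pushouts --- what you use is just the existence of a commutative square of injections over a common cospan, which amalgamated unions of finite sets do provide.) In (4), the paper deduces $\bfk\bfs V = 0$ by applying $\bfs$ to $0 \to V \to \bfs V$ and invoking $\bfd\bfs \cong \bfs\bfd$ from Lemma \ref{basic properties of sigma}(5), while you argue directly that a torsion $w \in (\Sigma_i V)_{\bfn}$ is killed by some $\iota_i(\bff)$ of positive degree in $V$, which is cleaner and bypasses the commutation lemma (you should add one line noting that a finite direct sum of torsion-free modules is torsion free to pass from each $\Sigma_i V$ to $\bfs V$). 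In (5), the paper uses the snake lemma, while your element chase (image of a torsion element in $W$ is torsion, hence zero, so the element lies in $U$ and vanishes there) is a legitimate and shorter alternative. In (7), you explicitly build a single $\bfg \in S_{\bft}$ conjugating $\bff$ into the image of $\iota_i$, whereas the paper uses the transitivity of the $S_{\bft}$-action to argue that \emph{every} morphism in $\C(\bfn+\bfo_i, \bft)$, and hence every $\iota_i(\bfh)$, already kills $v$; your construction is a concrete instance of the same mechanism. All of your arguments are sound; the paper's versions lean more heavily on the already-established lemmas, while yours are more self-contained.
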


\begin{proof}
(1): One direction of the first part is trivial. For the other direction, suppose that $V$ is a torsion module, so it is generated by torsion elements. Therefore, it suffices to prove the following claim: if $v \in V_{\bfn}$ is a torsion element and $\bff: \bfn \to \bft$ is an injection, then $\bff \cdot v \in V_{\bft}$ is a torsion element as well. Since $v$ is a torsion element, there exist an object $\bfl > \bfn$ and an injection $\bfg \in \C (\bfn, \bfl)$ such that $\bfg \cdot v = 0$. By Lemma \ref{basic combinatorics}, every injection in $\C (\bfn, \bfl)$ sends $v$ to 0, so does every morphism in $\C (\bfn, \bfl + \bft)$. Taking an $\bfh \in \C (\bft, \bfl + \bft)$, we know $(\bfh \circ \bff) \cdot v = 0$. That is, $\bff \cdot v$ is a torsion element.

The second part immediately follows from the first one.

(2): By the definition, the natural map $V \to \Sigma_i V$, while restricted to a fixed object $\bfn$, is given by $v \mapsto \pi_{\bfn, i} \cdot v$, and hence the first equality holds. For the second equality, one just notices that $\pi_{\bfn, i} \in \C (\bfn, \bfn + \bfo_i)$ and this morphism set has only one orbit as an $S_{\bfn + \bfo_i}$-set.

(3): If $\bfk V \neq 0$, then there exists a certain $i \in [m]$ such that $K_i V \neq 0$. By the above description, $V$ contains nonzero torsion elements, and hence is not torsion free. Conversely, if $V$ is not torsion free, we can find two objects $\bfn < \bft$, a nonzero element $v \in V_{\bfn}$, and a morphism $\bff \in \C(\bfn, \bft)$ such that $\bff \cdot v  = 0$. By a simple induction one can assume that $\deg(\bft) - \deg(\bfn) = 1$; that is, there is a certain $i \in [m]$ such that $\bft  = \bfn + \bfo_i$. Clearly, $v \in K_i V$, so $\bfk V \neq 0$.

(4): Since $V$ is torsion free, there is a short exact sequence
\begin{equation*}
\xymatrix{
0 \ar[r] & V \ar[r]^-{\theta_V} & \bfs V \ar[r] & \bfd V \ar[r] & 0.
}
\end{equation*}
Applying the exact functor $\bfs$ one gets a short exact sequence
\begin{equation*}
\xymatrix{
0 \ar[r] & \bfs V \ar[r]^-{\bfs \theta_V} & \bfs \bfs V \ar[r] & \bfs \bfd V \ar[r] & 0.
}
\end{equation*}
Note that in the above sequence, the map $\bfs \theta_V$ might not be the natural map $\theta_{\bfs V}: \bfs V \to \bfs (\bfs V)$. However, since we just proved that $\bfd \bfs \cong \bfs \bfd$ in Lemma \ref{basic properties of sigma}, this sequence is actually isomorphic to the exact sequence determined by the map $\theta_{\bfs V}$. In particular, $\bfk \bfs V \cong \bfs \bfk V = 0$. That is, $\bfs V$ is torsion free as well. One can also directly check that $\bfs V$ is torsion free by applying the argument in the proof of \cite[Proposition 2.1]{LY}.

(5): Applying the exact functor $\bfs$ one gets a commutative diagram where all vertical rows represent natural maps:
\begin{equation*}
\xymatrix{
0 \ar[r] & U \ar[r] \ar[d] & V \ar[r] \ar[d] & W \ar[r] \ar[d] & 0 \\
0 \ar[r] & \bfs U \ar[r] & \bfs V \ar[r] & \bfs W \ar[r] & 0.
}
\end{equation*}
According to the third statement, the maps $U \to \bfs U$ and $W \to \bfs W$ are injective, so is the map $V \to \bfs V$ by the snake Lemma.

(6): Follows from the above commutative diagram and the snake Lemma.

(7): Suppose that $\Sigma_i V \neq 0$ and take an element $v \in (\Sigma_i V)_{\bfn} = V_{\bfn + \bfo_i}$. Since $V$ is torsion, there exist an object $\bft > \bfn + \bfo_i$ and a morphism $\bff \in \C (\bfn + \bfo_i, \bft)$ such that $\bff \cdot v = 0$. Consequently, every morphism in $\C (\bfn + \bfo_i, \bft)$ sends $v$ to 0. Note that the self-embedding functor $\iota_i$ maps morphisms in $\C (\bfn, \bft - \bfo_i)$ into $\C(\bfn + \bfo_i, \bft)$ injectively. Therefore, every morphism in $\C (\bfn, \bft - \bfo_i)$ sends $v$ to 0, where $v$ is regarded as an element in $\Sigma_i V$. That is, $v$ is still a torsion element in $\Sigma_i V$.
\end{proof}

\begin{remark} \normalfont \label{description of KV}
Intuitively, an element $v \in V_{\bfn}$ is contained in $K_iV$ if and only if it vanishes when moving one step along the $i$-th coordinate axis direction. Therefore, $K_iV$ is a direct sum of several direct summands, each of which is supported on a ``hyperplane" (when $m = 1$, a hyperplane is just a point) perpendicular to the $i$-th coordinate axis.

One may give a more conceptual description for $K_iV$. Let $I_i$ be the free $k$-module spanned by all morphisms in $\C (\bfn, \bft)$, $\bfn, \bft \in \Nm$, such that $t_i > n_i$. The reader can check that $I_i$ is a two-sided ideal of $k \C$. Moreover, $K_i V$ is precisely the \emph{trace} of $k \C / I_i$ in $V$. That is,
\begin{equation*}
K_i V = \sum _{\begin{matrix} \alpha \in \Hom _{k \C} (M(\bfn)/I_iM(\bfn), V) \\ \bfn \in \Nm \end{matrix}} \alpha (V).
\end{equation*}
\end{remark}

Now we define \emph{torsion degree}, an invariant playing a crucial role in this paper.

\begin{definition} \normalfont \label{torsion degree}
For $i \in [m]$, let
\begin{equation*}
\td_i(V) = \sup \{ s \mid \exists \bfn \text{ such that } (K_iV)_{\bfn} \neq 0 \text{ and } n_i = s \}.
\end{equation*}
The torsion degree of $V$ is set to be
\begin{equation*}
\td(V) = \sup \{ \td_i(V) \mid i \in [m] \}.
\end{equation*}
Whenever a set on the right side is empty, we set the number on the left side to be -1.
\end{definition}

\begin{remark} \normalfont
For $m = 1$, this is not the usual definition of torsion degrees in \cite{L2, LR, LY}. However, as shown in \cite{L1}, the above one is an equivalent definition. Explicitly, for $i \in [m]$, $\td_i(V) < \infty$ if and only if the following two conditions hold: there is an object $\bfn$ with $n_i = \td_i(V)$ such that we can find an element $0 \neq v \in V_{\bfn}$ satisfying $\alpha \cdot v = 0$ for all $\alpha \in \C (\bfn, \bfn + \bfo_i)$; for any object $\bft$ with $\bft_i > \td_i(V)$ and any nonzero $v \in V_{\bft}$ and $\alpha \in \C(\bft, \bft + \bfo_i)$, $\alpha \cdot v \neq 0$.

It is also clear that $\td(V) = \td(V_T)$ since $V$ and $V_T$ have the same torsion elements, which completely determine the torsion degree.
\end{remark}

At this moment it is not clear that $\td(V)$ is a finite number. However, we have:

\begin{lemma} \label{finite torsion degree}
Let $V$ be a finitely generated nonzero $\C$-module. One has:
\begin{enumerate}
\item If $V'$ is a submodule, then
\begin{equation*}
\td(V) \leqslant \max \{\td(V'), \, \td(V/V') \}.
\end{equation*}
\item If $V$ is Noetherian, then $\td(V)$ is finite and $\td_i(\Sigma_i V) \leqslant \td_i(V) - 1$ whenever $\td_i(V) \neq 0$. In particular, $\td(\Sigma_1 \ldots \Sigma_m V) \leqslant \td(V) - 1$ whenever $\td(V) \neq 0$.
\end{enumerate}
\end{lemma}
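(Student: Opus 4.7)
The plan is to handle the two parts in sequence, with part (2) broken into finiteness, a single-shift inequality, and the iterated bound. For part (1), I would exploit that $K_i := \ker(\mathrm{Id}\to\Sigma_i)$ is left exact: for any nonzero $v\in(K_iV)_\bfn$, either $v$ already lies in the submodule $V'$ (so $v\in(K_iV')_\bfn$), or its image in $V/V'$ is nonzero and lies in $(K_i(V/V'))_\bfn$. Either way, $n_i\leqslant\max\{\td_i(V'),\td_i(V/V')\}$, and taking the supremum over $\bfn$ and over $i$ yields the claim.

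For finiteness in part (2), the approach is to replace $V$ by $V_T$ (using $\td(V)=\td(V_T)$) and use Noetherianity to extract finitely many generators of the submodule $K_iV$. First I would verify that $K_iV$ really is a $\C$-submodule of $V$, which reduces to the commutation identity $\pi_{\bft,i}\circ\bfg=\iota_i(\bfg)\circ\pi_{\bfn,i}$ for every $\bfg:\bfn\to\bft$ (a direct check from the definitions of $\pi$ and $\iota$). Since every element of $K_iV$ is a torsion element, $K_iV\subseteq V_T$, which is Noetherian, so $K_iV$ is generated by finitely many $w_j$ sitting in degrees $\mathbf{m}_j$. Each $w_j$ is killed by $\pi_{\mathbf{m}_j,i}$, and hence, by the transitive $S_{\mathbf{m}_j+\bfo_i}$-action of Lemma \ref{basic combinatorics}(1), by every morphism in $\C(\mathbf{m}_j,\mathbf{m}_j+\bfo_i)$. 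Invoking Lemma \ref{basic combinatorics}(3), every morphism $\mathbf{m}_j\to\bfn$ with $n_i\geqslant(\mathbf{m}_j)_i+1$ factors through $\mathbf{m}_j+\bfo_i$ and therefore kills $w_j$; so $(K_iV)_\bfn=0$ when $n_i>\max_j(\mathbf{m}_j)_i$, giving $\td_i(V)<\infty$.

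The shift inequalities rest on one computation: $(K_i\Sigma_iV)_\bfn=(K_iV)_{\bfn+\bfo_i}$. Indeed, the action of $\pi_{\bfn,i}$ on $(\Sigma_iV)_\bfn=V_{\bfn+\bfo_i}$ is via $\iota_i(\pi_{\bfn,i})$, which differs from $\pi_{\bfn+\bfo_i,i}$ only by a transposition in the $i$-th factor of $S_{\bfn+2\bfo_i}$ (a unit), so both maps have the same kernel. An analogous calculation for $j\neq i$ gives $(K_i\Sigma_jV)_\bfn=(K_iV)_{\bfn+\bfo_j}$, whence $\td_i(\Sigma_jV)\leqslant\td_i(V)$. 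Using commutativity of shifts from Lemma \ref{basic properties of sigma}(4), I would commute $\Sigma_i$ to the outside of $\Sigma_1\cdots\Sigma_mV$ and combine the two bounds to reach $\td_i(\Sigma_1\cdots\Sigma_mV)\leqslant\td_i(V)-1$ whenever $\td_i(V)\geqslant 1$. Since $\td(V)\neq 0$ forces $\td_i(V)\geqslant 1$ for the $i$ realising the maximum, taking $\sup_i$ gives the final statement.

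The main obstacle I anticipate is the finiteness step: one must first establish that $K_iV$ is a genuine submodule before Noetherianity can produce finitely many generators, and then exploit the ``once killed, always killed'' principle of Lemma \ref{basic combinatorics}(3) to pass from a bound on the generators' degrees to a bound on all degrees. The shift formulas look clean but require careful bookkeeping between $\iota_i(\pi_{\bfn,i})$ and $\pi_{\bfn+\bfo_i,i}$, which differ precisely by an element of the symmetric group.
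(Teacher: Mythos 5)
Your proposal is correct, and it reaches the same conclusions by essentially the same means, though the details are packaged differently at a few points. For part (1) you work directly with a single torsion element and dichotomize on whether it lies in $V'$ or maps to a nonzero element of $V/V'$; the paper instead splits on whether $\td(V)$ is finite or infinite and then picks an extremal element (or infinite sequence). Your version is a bit cleaner since it handles both cases uniformly. For the finiteness of $\td_i(V)$, the paper uses the structural fact (Remark \ref{description of KV}) that $K_iV$ decomposes as a direct sum of summands supported on hyperplanes perpendicular to the $i$-th axis, and then observes that Noetherianity forces this sum to be finite. You instead extract finitely many generators $w_j$ of $K_iV$ and argue via Lemma \ref{basic combinatorics}(1) and (3) that once a generator dies under the one-step map $\pi_{\mathbf{m}_j,i}$ it dies under every map increasing the $i$-th coordinate, so $(K_iV)_\bfn=0$ once $n_i$ exceeds $\max_j(\mathbf{m}_j)_i$. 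These two arguments amount to the same underlying mechanism; yours is more computational, the paper's more structural. (One small simplification: since $V$ is assumed Noetherian you can apply Noetherianity to $K_iV\subseteq V$ directly without detouring through $V_T$.) For the shift inequalities, your explicit identities $(K_i\Sigma_iV)_\bfn=(K_iV)_{\bfn+\bfo_i}$ (via the transposition making $\iota_i(\pi_{\bfn,i})$ and $\pi_{\bfn+\bfo_i,i}$ agree up to a unit) and $(K_i\Sigma_jV)_\bfn=(K_iV)_{\bfn+\bfo_j}$ for $j\neq i$ (via equality of the two maps) fill in what the paper dismisses as ``easily verified by Remark \ref{description of KV},'' and the commutation via Lemma \ref{basic properties of sigma}(4) finishes the iterated bound. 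One point to tidy: when taking $\sup_i$ at the end, the bound $\td_i(\Sigma_1\cdots\Sigma_mV)\leqslant\td_i(V)-1$ is only immediate for indices with $\td_i(V)\geqslant 0$; for indices with $\td_i(V)=-1$ you should note separately that $K_iV=0$ forces $K_i\Sigma_1\cdots\Sigma_mV=0$ (via your $(K_i\Sigma_jV)_\bfn=(K_iV)_{\bfn+\bfo_j}$ identities), so those indices contribute $-1\leqslant\td(V)-1$ and do not obstruct the final inequality.
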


\begin{proof}
(1): Firstly suppose that $\td(V)$ is finite, there exists a certain $i \in [m]$ such that $\td(V) = \td_i(V)$. Now by the previous remark, there is an object $\bfn$ with $n_i = \td_i(V)$ such that we can find an element $0 \neq v \in V_{\bfn}$ satisfying $\alpha \cdot v = 0$ for all $\alpha \in \C (\bfn, \bfn + \bfo_i)$. If $v \in V'_{\bfn}$, then $\td_i(V') \geqslant \td_i(V)$, so $\td(V') \geqslant \td(V)$. Otherwise, the image $0 \neq \bar{v} \in (V/V')_{\bfn}$ of $v$ satisfies $\alpha \cdot v = 0$ for all $\alpha \in \C (\bfn, \bfn + \bfo_i)$, so by the same argument, $\td_i(V/V') \geqslant \td_i(V)$, and hence $\td(V/V') \geqslant \td(V)$.

If $\td(V) = \infty$, there exists a certain $i \in [m]$ such that $\td_i(V) = \infty$. In particular, we can find an infinite sequence of objects $\bfn^j$ and an infinite sequence of nonzero elements $v^j \in V_{\bfn^j}$ such that $\alpha \cdot v^j = 0$ for all $\alpha \in \C (\bfn^j, \bfn^j + \bfo_i)$ and $\bfn^1_i < \bfn^2_i < \ldots$. Using the same argument, one can show that either $\td_i(V) = \infty$ or $\td_i(V/V') = \infty$.

(2): To show the finiteness of $\td(V)$, by Definition \ref{torsion degree}, it suffices to show that $\td_i(V) < \infty$ for $i \in [m]$. By Statement (2) of Lemma \ref{torsion modules} and Remark \ref{description of KV}, $K_i V$ is a direct sum of a few direct summands, each of which is supported on a ``hyperplane" in $\Nm$ which is perpendicular to the $i$-th coordinate axis. Since $V$ is Noetherian, $K_iV$ is finitely generated as well. Therefore, the number of such direct summands is finite. However, $\td_i(V)$ is nothing but the maximal height of these ``hyperplanes", so must be finite as well.

It is clear that $\td_i(\Sigma_i V) = \td_i(V) - 1$ whenever $K_i V \neq 0$. Moreover, one observes that $\td_i (\Sigma_j V) \leqslant \td_i(V)$. These observations can be easily verified by Remark \ref{description of KV}.
\end{proof}

\begin{remark} \normalfont
When $m = 1$, one has $\gd(\bfk V) = \td(V)$; see \cite{L2}. For $m > 1$, the equality might not hold, and the reader can easily find an example such that $\gd(KV) > \td(V)$. However, we always have $\gd(K_i V) \geqslant \td_i(V)$. To see this, consider the direct summand of $K_iV$ which is supported on the highest hyperplane. Then $\td_i(V)$ is precisely the height of this hyperplane, while the generating degree of this direct summand is greater than or equal to this height since the degree of every object in this hyperplane is at least the height. Therefore, as claimed, $\gd(K_iV) \geqslant \td_i(V)$ for each $i \in [m]$. Consequently, $\gd(\bfk V) \geqslant \td(V)$.

For $m = 1$, one has $\td(\bfs V) = \td(V) - 1$ whenever $V$ is not torsion free. This equality does not hold for $m > 1$ as well. For example, let $m = 2$, and $V$ be a nonzero $\C$-module such that its values on objects different from $(3, 3)$ are all zeroes. Then $\td(\bfs V) = \td(V) = 3$. One cannot strengthen the inequality in the above lemma to an equality, either. Indeed, let $m = 2$ and $V$ be a nonzero module such that its values on all objects different from $(3, 0)$ are zeroes. Then $\Sigma_1 \Sigma_2 V = 0$ and hence $\td(V) - \td(\Sigma_1 \Sigma_2 V) = 4$.
\end{remark}

\section{Noetherianity}

Throughout this section let $k$ be a commutative Noetherian ring. We will show the following statement: every finitely generated $\C$-module is also finitely presented. By a standard homological argument, this statement is equivalent to the locally Noetherian property of $\C$.

\subsection{Truncation functors.}

We introduce the following definition.

\begin{definition} \label{locally finite}
A $\C$-module $V$ is \emph{locally finite} if for each object $\bfn$, $V_{\bfn}$ is a finitely generated $k$-module.
\end{definition}

Since $k$ is Noetherian, the reader can see that the category of all locally finite $\C$-modules is an abelian category. Furthermore, a locally finite $\C$-module is finitely generated (resp., finitely presented) if and only if its generating degree is finite (resp., generating degree and first homological degree are finite).

From now on we only consider locally finite $\C$-modules. We have:

\begin{lemma} \label{equivalent characterizations}
The following statements are equivalent:
\begin{enumerate}
\item The category $\C$ is locally Noetherian.
\item Every finitely generated $\C$-module is finitely presented.
\item The first homological degree of every finitely generated module is finite.
\end{enumerate}
\end{lemma}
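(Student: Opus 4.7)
My plan is to establish the cycle $(1)\Rightarrow(2)\Rightarrow(3)\Rightarrow(2)\Rightarrow(1)$, using only standard homological manipulations; the real content will live in the $\Tor$ long exact sequence arising from a finite free presentation. I will work throughout in the abelian category of locally finite $\C$-modules and exploit the graded Nakayama identity $\gd(W)=\hd_0(W)$ for locally finite $W$, which is valid because $\mm$ is the two-sided ideal of strictly positive-degree morphisms and $k\C/\mm$ sees only the minimal generators of $W$.

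For $(1)\Rightarrow(2)$ I would take a finitely generated $V$, pick a surjection $\pi\colon P\to V$ with $P$ a finite direct sum of free modules $M(\bfn)$ (each finitely generated by the identity on $\bfn$), and conclude from (1) that $\ker\pi$ is finitely generated, so $V$ is finitely presented. For $(2)\Rightarrow(1)$ I would reduce to controlling the preimage $\pi^{-1}(W)$ of a submodule $W\subseteq V$: since $V/W$ is a finitely generated quotient of $V$ it is finitely presented by (2), and Schanuel's lemma then forces $\pi^{-1}(W)$, the kernel of the composition $P\to V\to V/W$ from a finitely generated projective to a finitely presented module, to be finitely generated; hence $W=\pi(\pi^{-1}(W))$ is finitely generated.

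The heart of the argument is the equivalence $(2)\Leftrightarrow(3)$. I will begin with a surjection $\pi\colon P_0\to V\to 0$ where $P_0$ is a finite direct sum of free modules with $\gd(P_0)=\gd(V)$, set $K=\ker\pi$, and apply the long exact sequence of $\Tor_{\ast}^{k\C}(k\C/\mm,-)$ to $0\to K\to P_0\to V\to 0$. Since $P_0$ is projective the sequence collapses to
\begin{equation*}
0\to H_1(V)\to H_0(K)\to H_0(P_0)\to H_0(V)\to 0.
\end{equation*}
If (2) holds then $K$ is finitely generated, so $\hd_1(V)\le\hd_0(K)=\gd(K)<\infty$, giving (3). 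Conversely, if (3) holds then exactness forces $\gd(K)=\hd_0(K)\le\max\{\hd_1(V),\gd(V)\}<\infty$; since $K$ sits inside the locally finite $P_0$ over the Noetherian base $k$ it is itself locally finite, so finite generating degree upgrades to finite generation, $K$ is finitely generated, and $V$ is finitely presented.

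I expect the only real subtlety to be the graded Nakayama identity used to pass between ``finitely generated'' and ``$\hd_0<\infty$'' for locally finite modules; once that dictionary is in place the four implications are formal, and the lemma will have done its job of reducing local Noetherianity of $\C$ to the single homological statement that $\hd_1(V)<\infty$ for every finitely generated $V$, which the remainder of the section will then establish.
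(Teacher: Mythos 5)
Your argument is correct, and it amounts to a careful expansion of the single word the paper allows itself here (``Obvious''): the equivalence $(2)\Leftrightarrow(3)$ via the four-term $\Tor$ exact sequence and graded Nakayama for locally finite modules, together with the Schanuel-type reduction for $(2)\Rightarrow(1)$, is precisely the standard argument the paper is implicitly invoking, and the reduction to locally finite submodules of free modules (valid since $k$ is Noetherian) is exactly the point that makes ``finite generating degree'' interchangeable with ``finitely generated.''
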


\begin{proof}
Obvious.
\end{proof}

For each $i \in [m]$, we define
\begin{equation*}
\mJ_i = k \bigsqcup_{n_i > 0} \C(\bft, \bfn),
\end{equation*}
spanned by morphism ending at objects $\bfn$ with $n_i > 0$. This is a two-sided ideal of the category algebra $k \C$. Furthermore, the reader can check that the quotient algebra $k \C / \mJ_i$ is isomorphic to the category algebra $k \C'$, where $\C'$ is the skeletal category of $\FI^{m-1}$ whose objects are parameterized by elements in $\mathbb{N}^{m-1}$. By convention, if $m = 1$, then $k \C /\mJ_i \cong k$.

Let $\tau_i = k \C / \mJ_i \otimes_{k \C} -$, which is a functor from the category of $\C$-modules to the category of $\C'$-modules (by identifying $k \C / \mJ_i$ with $k \C'$). Conversely, every $\C'$-module can be viewed as a $\C$-module by \emph{lifting}. The behavior of $\tau_i$ has a very explicit description. That is, given a $\C$-module $V$, one has
\begin{equation*}
(\tau_i V)_{\bfn} =
\begin{cases}
V_{\bfn}, & \text{ if } n_i = 0;\\
0, & \text{ otherwise},
\end{cases}
\end{equation*}
and
\begin{equation*}
(\mJ_i V)_{\bfn} =
\begin{cases}
V_{\bfn}, & \text{ if } n_i > 0;\\
0, & \text{ otherwise}.
\end{cases}
\end{equation*}
Therefore, we obtain a short exact sequence $0 \to \mJ_i V \to V \to \tau_i V \to 0$. Moreover, one has $\Sigma_i V = \Sigma_i \mJ_i V$ since $\Sigma_i \tau_i V = 0$.

\subsection{Finitely presented property.}

Let $V$ be a finitely generated $\C$-module. In this subsection we relate the finitely presented property of $V$ to that of $\Sigma_iV$ for each $i \in [m]$.

\begin{lemma} \label{finitely presented property}
Suppose that $\FI^{m-1}$ is locally Noetherian over $k$, and let $V$ be a finitely generated $\C$-module.
\begin{enumerate}
\item If there is a certain $i \in [m]$ such that $\Sigma_i V$ is finitely presented, then $V$ is finitely presented.
\item If the natural map $V \to \Sigma_i V$ is injective, and $D_iV$ is finitely presented, then $V$ is finitely presented.
\end{enumerate}
\end{lemma}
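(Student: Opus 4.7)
The plan is to exploit two short exact sequences: $0 \to \mJ_i V \to V \to \tau_i V \to 0$ for part (1), and $0 \to V \to \Sigma_i V \to D_i V \to 0$ (valid when $K_i V = 0$) for part (2). In each case I would bound $\hd_1(V)$ via the long exact sequence of Tor, using the inductive assumption that $\FI^{m-1}$ is locally Noetherian to control one of the outer terms, and the hypothesis on $\Sigma_i V$ (resp.~$D_i V$) to control the other.

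For part (1), the first step is to observe that $\tau_i V$ is finitely generated as a $\C'$-module: since no morphism in $\C$ can decrease any coordinate, the images in $\tau_i V$ of those generators of $V$ that lie at objects $\bfn$ with $n_i = 0$ already generate $\tau_i V$. The inductive hypothesis then makes $\tau_i V$ Noetherian, hence finitely presented, as a $\C'$-module. The key technical step is transferring this to a statement about the $\C$-homological invariant $\hd_1^{\C}(\tau_i V)$; for this I would construct an explicit finite-length $\C$-projective resolution of each free $\C'$-module viewed as a $\C$-module via extension by zero. Concretely, if $M'$ denotes the free $\C'$-module at $\bfn'$ lifted to $\C$, the surjection $M(\bfn) \to M'$ (with $\bfn$ obtained from $\bfn'$ by inserting $0$ in the $i$-th position) has kernel $\mJ_i M(\bfn)$, which is generated by the morphism $\pi_{\bfn, i}$; iterating produces a Koszul-type resolution of bounded length. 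Separately, for $\mJ_i V$, the identity $\Sigma_i \mJ_i V = \Sigma_i V$ shows that $\Sigma_i \mJ_i V$ is finitely presented; since $\mJ_i V$ is supported on $\{n_i \geqslant 1\}$, Lemma \ref{a special case} applies and, together with the structural sequence $0 \to K_i \mJ_i V \to \mJ_i V \to \Sigma_i \mJ_i V \to D_i \mJ_i V \to 0$, yields $\hd_1(\mJ_i V) < \infty$. Combining these two pieces through the long exact Tor sequence of $0 \to \mJ_i V \to V \to \tau_i V \to 0$ gives $\hd_1(V) < \infty$.

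For part (2), the aim is to show that $\Sigma_i V$ is finitely presented and then invoke part (1). To avoid the circularity that the naive long exact Tor sequence of $0 \to V \to \Sigma_i V \to D_i V \to 0$ introduces, I would proceed by induction on $\gd(V)$. The base case $\gd(V) = 0$ is trivial, since the hypothesis $K_i V = 0$ then forces $V = 0$. For the inductive step, Lemma \ref{basic properties of sigma}(3) gives $\gd(D_i V) \leqslant \gd(V) - 1$, and the commutation $\Sigma_i D_i \cong D_i \Sigma_i$ from Lemma \ref{basic properties of sigma}(5) lets one set up an auxiliary module of strictly smaller generating degree, still satisfying the $K_i = 0$ hypothesis, to which the inductive hypothesis applies; propagating finite presentation back up through the filtration then yields finite presentation of $\Sigma_i V$, and part (1) finishes the job.

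The main obstacle I anticipate is the second step of part (1), namely the explicit construction of a finite-length $\C$-projective resolution of a lifted free $\C'$-module. This requires a careful combinatorial analysis of how morphisms in $\C$ with $t_i > 0$ factor through the minimal jumping maps $\pi_{\bfn, i}$, and a uniform bound on the length of the resulting Koszul-type complex. A secondary obstacle is ensuring that the auxiliary module chosen in the induction of part (2) genuinely has strictly smaller generating degree while still satisfying the $K_i = 0$ condition.
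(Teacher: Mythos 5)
Your part~(1) strategy starts from the same short exact sequence $0 \to \mJ_i V \to V \to \tau_i V \to 0$ as the paper, but then diverges: where you propose to build an explicit finite-length Koszul-type $\C$-projective resolution of a lifted free $\C'$-module and transfer Noetherianity through it, the paper simply takes a surjection $P \to \tau_i V$ from a free $\C$-module, lets $\tilde W$ be its kernel, and observes that $\tilde W$ is an extension of $W = \ker(\tau_i P \to \tau_i V)$ (a $\C'$-module controlled by the inductive hypothesis) by $\mJ_i P$ (whose generating degree is $\leqslant \gd(P)+1$ by Lemma~\ref{a special case}). That one diagram chase replaces the entire ``Koszul resolution of bounded length'' step you flag as the main obstacle, and in fact you never establish that such a bounded-length resolution exists. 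Your treatment of $\mJ_i V$ is likewise gestured at via the four-term structural sequence, but what you actually need is to apply $\Sigma_i$ to a chosen presentation $0\to U\to Q\to \mJ_i V\to 0$ with $Q$ supported on $\{n_i\geqslant 1\}$, use the hypothesis to bound $\gd(\Sigma_i U)$, and then invoke Lemma~\ref{a special case} to bound $\gd(U)$; the structural sequence alone does not yield $\hd_1(\mJ_i V)<\infty$.

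For part~(2) there is a genuine error. Your base case is false: $\gd(V)=0$ and $K_iV=0$ do not force $V=0$ --- the free module $M(\mathbf 0)$ is a counterexample, being torsion free, nonzero, and generated in degree $0$. (Only $\gd(V)=-1$ means $V=0$ in the paper's conventions.) Beyond that, the inductive step is too vague to assess: it is never specified what the ``auxiliary module of strictly smaller generating degree still satisfying $K_i=0$'' is, nor how finite presentation ``propagates back up through the filtration.'' The paper avoids any induction on $\gd(V)$ in part~(2): it first notes that injectivity of $V\to\Sigma_iV$ gives injectivity of $\mJ_i V\to\Sigma_i\mJ_i V=\Sigma_i V$, then uses a commutative diagram built from $0\to\mJ_iV\to V\to\tau_iV\to 0$ and the natural maps into the shifted modules to produce $0\to\tau_iV\to D_i\mJ_iV\to D_iV\to 0$, concluding $D_i\mJ_iV$ is finitely presented; finally it takes a presentation $0\to U\to Q\to\mJ_iV\to 0$, applies $D_i$ (exact here because $\mJ_iV$ is torsion free, by Lemma~\ref{torsion modules}(6)), bounds $\gd(D_iU)$, and closes with Lemma~\ref{a special case}. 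Your overall plan of ``prove $\Sigma_iV$ is finitely presented and reduce to part~(1)'' is not what the paper does, and as written it does not go through.
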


\begin{proof}
By the exact sequence $0 \to \mJ_i V \to V \to \tau_i V \to 0$, it suffices to show that both $\tau_iV$ and $\mJ_i V$ are finitely presented. Note that $\tau_i V$ is finitely generated, and so is $\mJ_i V$ because
\begin{equation*}
\gd(\mJ_i V) \leqslant \gd(\Sigma_i \mJ_i V) + 1 = \gd(\Sigma_i V) + 1 \leqslant \gd(V) + 1
\end{equation*}
by Lemma \ref{a special case} (the first inequality) and Lemma \ref{basic properties of sigma} (the second inequality).

We claim that $\hd_1(\tau_i V)$ is finite. Let $P \to \tau_i V \to 0$ be a surjection such that $P$ is a finitely generated projective $\C$-module with $\gd(P) = \gd(\tau_i V)$. It induces the following commutative diagram
\begin{equation*}
\xymatrix{
 & 0 \ar[r] & \mJ_i P \ar[r] \ar[d] & \tilde{W} \ar[r] \ar[d] & W \ar[r] & 0\\
 & & P \ar@{=}[r] \ar[d] & P \ar[d] \\
0 \ar[r] & W \ar[r] & \tau_i P \ar[r] & \tau_i V \ar[r] & 0.
}
\end{equation*}
Since $\gd(P)$ is finite, so is $\gd(\mJ_i P)$ by the same argument as in the previous paragraph. Moreover, by viewing terms in the bottom row as $\FI^{m-1}$-modules and using the given condition, one deduces that $\gd(W) < \infty$. Consequently, from the top row we conclude that $\gd(\tilde{W})$ is finite, so $\tau_i V$ is finitely presented.

Now we turn to $\mJ_i V$. Let $0 \to U \to Q \to \mJ_i V \to 0$ be a short exact sequence such that $Q$ is a finitely generated projective $\C$-module with $\gd(Q) = \gd(\mJ_i V)$.

(1): Applying $\Sigma_i$ we get $0 \to \Sigma_i U \to \Sigma_i Q \to \Sigma_i \mJ_i V = \Sigma_i V \to 0$. Since $\Sigma_i V$ is supposed to be finitely presented, $\gd(\Sigma_i U) < \infty$. Moreover, by the structure of $\mJ_i V$, we can suppose that $Q_{\bfn} = 0$ for any object $\bfn$ with $n_i = 0$. Then by Lemma \ref{a special case}, we conclude that $\gd(U) \leqslant \gd(\Sigma_i U) + 1$. That is, $\mJ_i V$ is finitely presented as well.

(2): Since the natural map $V \to \Sigma_i V$ is injective, so is the natural map $\mJ_i V \to \Sigma_i \mJ_i V = \Sigma_i V$. Therefore, we have the following commutative diagram
\begin{equation*}
\xymatrix{
 & 0 \ar[r] & \mJ_i V \ar[r] \ar[d] & V \ar[r] \ar[d] & \tau_i V \ar[r] & 0\\
 & & \Sigma_i \mJ_i V \ar@{=}[r] \ar[d] & \Sigma_i V \ar[d] \\
0 \ar[r] & \tau_i V \ar[r] & D_i \mJ_i V \ar[r] & D_iV \ar[r] & 0.
}
\end{equation*}
Note that $D_iV$ is finitely presented by the given condition, and we just proved that $\tau_i V$ is finitely presented. Consequently, $D_i \mJ_i V$ is finitely presented, too.

By Statement (6) of Lemma \ref{torsion modules}, the short exact sequence $0 \to U \to Q \to \mJ_i V \to 0$ induces a short exact sequence $0 \to D_i U \to D_i Q \to D_i \mJ_i V \to 0$. Since $D_i \mJ_i V$ is finitely presented, $\gd(D_i U) < \infty$. By Lemma \ref{a special case}, $\gd(U) < \infty$. That is, $\mJ_i V$ is finitely presented.
\end{proof}

\subsection{Filtrations}

In this subsection we construct a sequence of quotient modules for a $\C$-module $V$ using a fixed shift functor $\Sigma_1$. Let $V^0 = V$, $V^1$ be the image of the natural map $V^0 \to \Sigma_1 V^0$, and $V^2$ be the image of the natural map $V^1 \to \Sigma V^1$, and so on. In this way we obtain a sequence of quotient maps
\begin{equation*}
V = V^0 \to V^1 \to V^2 \to \ldots.
\end{equation*}
Define
\begin{equation*}
\bar{V} = \lim _{\to} V^i,
\end{equation*}
which is a quotient module of $V$.

\begin{lemma} \label{filtration}
Let $\bar{V}$ be as define above. Then the natural map $\bar{V} \to \Sigma_1 \bar{V}$ is injective. Moreover, if the kernel of $V \to \bar{V}$ is finitely generated, then the above sequence stabilizes after finitely many steps.
\end{lemma}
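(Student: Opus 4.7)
The plan is to handle both claims by carefully tracking the kernels of the surjections $V^i \twoheadrightarrow V^{i+1}$. By construction $V^{i+1}$ is the image of $\pi_1^\ast: V^i \to \Sigma_1 V^i$, so its kernel is exactly $K_1 V^i$ and there is a canonical inclusion $V^{i+1} \hookrightarrow \Sigma_1 V^i$. Setting $W_i = \ker(V \to V^i)$, the $W_i$ form an ascending chain of submodules of $V$ with $V^i \cong V/W_i$, and $\bar V \cong V / W$ where $W := \bigcup_i W_i = \ker(V \to \bar V)$.

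For the injectivity of $\bar V \to \Sigma_1 \bar V$, the key observation is that $\Sigma_1$ commutes with direct limits: colimits of $\C$-modules are computed pointwise, and $\Sigma_1$ is pullback along the faithful functor $\iota_1$, so $\Sigma_1 \bar V \cong \lim_{\to} \Sigma_1 V^i$. Naturality of $\pi_1^\ast$ assembles the individual maps $\pi_1^\ast: V^i \to \Sigma_1 V^i$ into a morphism of directed systems, so the natural map $\bar V \to \Sigma_1 \bar V$ is the colimit of these $\pi_1^\ast$. Now if $\bar v \in \bar V$ is lifted to $v \in V^j$ and dies in $\Sigma_1 \bar V$, then $\pi_1^\ast(v) \in \Sigma_1 V^j$ dies already in $\Sigma_1 V^k$ for some $k \geqslant j$. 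By commutativity of the naturality squares, this equals $\pi_1^\ast$ applied to the image $v'$ of $v$ in $V^k$, so $v' \in K_1 V^k = \ker(V^k \twoheadrightarrow V^{k+1})$. Hence $v$ maps to $0$ in $V^{k+1}$, and therefore to $0$ in $\bar V$.

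For the stabilization claim, assume $W$ is finitely generated and fix a finite generating set $S \subseteq W$. Since $W = \bigcup_i W_i$ with the $W_i$ ascending, we have $S \subseteq W_N$ for some $N$, and therefore $W_N = W_{N+1} = \cdots = W$. For $i \geqslant N$ the surjection $V^i \twoheadrightarrow V^{i+1} \cong V/W_{i+1}$ thus has trivial kernel and is an isomorphism, which is precisely the stabilization statement.

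The only step that requires genuine attention is the commutation of $\Sigma_1$ with directed colimits together with the identification of the natural map $\bar V \to \Sigma_1 \bar V$ as the colimit of the individual $\pi_1^\ast$; both facts are standard but they are what makes the diagram chase in the first claim work. Everything else reduces to the elementary observation that a finitely generated submodule expressed as an increasing union of submodules must equal one of the terms.
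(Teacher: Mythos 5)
Your proof is correct and fills in the details that the paper delegates to a citation of \cite[Lemmas 3.1, 3.3 and 3.7]{GL2}. Both key points are exactly right: since $\Sigma_1$ is a pullback functor and colimits of $\C$-modules are computed pointwise, $\Sigma_1$ commutes with filtered colimits and $\pi_1^\ast$ commutes with the colimit by naturality, which reduces injectivity of $\bar V \to \Sigma_1 \bar V$ to the tautology that $V^{i+1} = V^i/K_1 V^i$; and the ascending chain $W_i = \ker(V \to V^i)$ with union $W = \ker(V \to \bar V)$ must stabilize once $W$ is finitely generated. This is the same underlying mechanism one finds in the cited lemmas, so the proposal is not a different route, just an explicit writeup of the intended argument.
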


\begin{proof}
The proof is similar to proofs of \cite[Lemmas 3.1, 3.3 and 3.7]{GL2}.
\end{proof}

\subsection{Proof of Noetherianity}

Now we are ready to prove the locally Noetherianity of $\C$ over commutative Noetherian rings.

\begin{proof}[A proof of Theorem \ref{noetherianity}]
We use a double induction on $m$ and the generating degree $\gd(V)$. For $m = 0$, $k \FI^0 = k$ by our convention, so the conclusion holds. Suppose that the conclusion holds for $\FI^s$ with $s < m$. For $\FI^m$, it suffices to show that for any finitely generated $\C$-module $V$ we have $\hd_1(V) < \infty$ by Lemma \ref{equivalent characterizations}. Clearly, this is true for $\gd(V) = -1$, that is, $V = 0$, so we suppose that $\gd(V) \geqslant 0$.

As described in the previous subsection, we obtain a sequence of quotient maps
\begin{equation*}
V = V^0 \to V^1 \to V^2 \to \ldots
\end{equation*}
and a limit $\bar{V}$. If $\gd(\bar{V}) < \gd(V)$, then by the induction hypothesis on generating degrees, $\bar{V}$ is finitely presented. If $\gd(\bar{V}) = \gd(V)$, then the natural map $\bar{V} \to \Sigma_1 \bar{V}$ is injective, and $\gd(D_1 \bar{V}) < \gd(\bar{V}) \leqslant \gd(V)$. Therefore, by the induction hypothesis on generating degrees, $D_1 \bar{V}$ is finitely presented, so does $\bar{V}$ by Statement (2) of Lemma \ref{finitely presented property}. In both cases, we know that $\bar{V}$ is finitely presented, so the kernel of the quotient map $V \to \bar{V}$ is finitely generated. By Lemma \ref{filtration}, there exists an $N \in \mathbb{N}$ such that $V^n = V^N$ for all $n \geqslant N$.

By our construction, there is a short exact sequence $0 \to V^N \to \Sigma_1 V^{N-1} \to D_1 V^{N-1} \to 0$. The induction hypothesis tells us that $D_1 (V^{N-1})$ is finitely presented since $\gd(D_1 V^{N-1}) < \gd(V^{N-1}) \leqslant \gd(V)$. We just proved that $V^N$ is finitely presented, so is $\Sigma_1 V^{N-1}$. By Statement (1) of Lemma \ref{finitely presented property}, $V^{N-1}$ is finitely presented as well. Replacing $V^N$ by $V^{N-1}$ and carrying out this procedure, recursively one can show that all $V^i$, $0 \leqslant i \leqslant N$ are finitely presented. In particular, $V$ is finitely presented. The conclusion follows by induction.
\end{proof}

\section{Relative projective modules}

In this section we consider relative projective modules. These special modules generalize projective modules, and have played a key role in constructing a homological computation machinery for $\FI$-modules. For details, see \cite{L2, LR, LY}. Our goal is to extend most results in \cite{LY} from $\FI$ to $\FI^m$ for arbitrary $m \geqslant 1$. Let $k$ be an arbitrary commutative ring, and $V$ be a $\C$-module.

\subsection{Basic properties} Given an object $\bfn$ and a $k S_{\bfn}$-module $W$, we can define a $\C$-module $M(\bfn) \otimes _{k S_{\bfn}} W$ as $M(\bfn)$ is a $(k \C, k S_{\bfn})$-bimodule. We call it a \emph{basic} relative projective module.

\begin{definition} \normalfont \label{relative projective modules}
A $\C$-module $V$ is a relative projective module if it has a filtration $0 = V^0 \subseteq V^1 \subseteq \ldots \subseteq V^n = V$ such that each factor $V^{i+1} / V^i$, $0 \leqslant i \leqslant n-1$, is isomorphic to a basic relative projective module.
\end{definition}

The reader immediately sees that if $V$ is a relative projective module, then $\gd(V) < \infty$. Of course, one can remove this restriction from the above definition. However, since we are mostly interested in finitely generated $\C$-modules, it is reasonable to impose this condition. It is also obvious that projective modules with finite generating degrees are relative projective modules.

\begin{lemma} \label{properties of basic relative projective modules}
Let $V$ be a $\C$-module generated by its value on a certain object $\bfn$. One has:
\begin{enumerate}
\item The following are equivalent:
\begin{itemize}
\item $V$ is a basic relative projective module;
\item $H_s(V) = 0$ for all $s \geqslant 1$;
\item $H_1(V) = 0$.
\end{itemize}
\item If $V$ is relative projective, then it is torsion free.
\item If $\hd_1(V) \leqslant \gd(V)$, then $V$ is relative projective.
\item If $V$ is relative projective, so is $\Sigma_i V$ and $D_i V$ for $i \in [m]$.
\end{enumerate}
\end{lemma}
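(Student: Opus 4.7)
My plan is to establish the four parts in order, using the earlier parts in the later arguments.

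For part $(1)$, I will prove (i) $\Rightarrow$ (ii) $\Rightarrow$ (iii) $\Rightarrow$ (i). For (i) $\Rightarrow$ (ii), write $V = M(\bfn) \otimes_{kS_\bfn} W$ and pick a free resolution $F_\bullet \to W$ of $W$ as a $kS_\bfn$-module. By Statement $(1)$ of Lemma \ref{basic combinatorics}, $S_\bfn$ acts freely on each $\C(\bfn, \bft)$ from the right, so $M(\bfn)$ is a free right $kS_\bfn$-module, hence $M(\bfn) \otimes_{kS_\bfn} -$ is exact and $M(\bfn) \otimes_{kS_\bfn} F_\bullet \to V$ is a projective resolution of $V$. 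Since the zeroth homology $H_0(M(\bfn))$ is concentrated at $\bfn$ with value $kS_\bfn$, the complex computing $H_\ast(V)$ is isomorphic to $F_\bullet$ at $\bfn$ and zero elsewhere, so its higher homology vanishes. The step (ii) $\Rightarrow$ (iii) is trivial. For (iii) $\Rightarrow$ (i), consider the natural surjection $\varphi: M(\bfn) \otimes_{kS_\bfn} V_\bfn \twoheadrightarrow V$ with kernel $K$; since both $H_0(M(\bfn) \otimes_{kS_\bfn} V_\bfn)$ and $H_0(V)$ equal $V_\bfn$ at $\bfn$ and vanish elsewhere with $H_0(\varphi)$ an isomorphism, the long exact sequence of homology combined with $H_1(V) = 0$ forces $H_0(K) = 0$, hence $K = 0$.

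For part $(2)$, the long-exact-sequence argument on the filtration shows that every relative projective module satisfies $H_s = 0$ for $s \geqslant 1$, so part $(1)$ forces $V$ to be basic, i.e.\ $V \cong M(\bfn) \otimes_{kS_\bfn} W$. It then suffices to show each basic $M(\bfn) \otimes_{kS_\bfn} W$ is torsion free. This follows because for any morphism $\bff \in \C(\bft, \bfl)$, Statement $(2)$ of Lemma \ref{basic combinatorics} (left cancellation) implies that the induced map on right $S_\bfn$-orbits $\C(\bfn, \bft)/S_\bfn \to \C(\bfn, \bfl)/S_\bfn$ is injective, hence the induced $k$-linear map $V_\bft \to V_\bfl$ is injective.

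For part $(3)$, use the same surjection $\varphi: M(\bfn) \otimes_{kS_\bfn} V_\bfn \twoheadrightarrow V$ with kernel $K$. The long exact sequence gives $\gd(K) \leqslant \hd_1(V) \leqslant \gd(V) = \deg(\bfn)$. Direct inspection shows $K_\bft = 0$ for every object $\bft$ with $\deg(\bft) \leqslant \deg(\bfn)$: if $\bft \not\geqslant \bfn$, the source of $\varphi$ vanishes at $\bft$; if $\bft = \bfn$, then $\varphi_\bfn$ is the identity. Since $K$ is generated in such degrees yet vanishes on all of them, $K = 0$, and $V \cong M(\bfn) \otimes_{kS_\bfn} V_\bfn$ is basic relative projective. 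For part $(4)$, invoking part $(1)$ as above reduces the hypothesis to $V \cong M(\bfn) \otimes_{kS_\bfn} W$. Since $\Sigma_i$ preserves the coequalizer defining the tensor product, one has $\Sigma_i V \cong \Sigma_i M(\bfn) \otimes_{kS_\bfn} W$. The decomposition of Statement $(1)$ of Lemma \ref{basic properties of sigma} can be refined to a right $S_\bfn$-equivariant splitting
\begin{equation*}
\Sigma_i M(\bfn) \cong M(\bfn) \oplus \bigl(M(\bfn - \bfo_i) \otimes_{kS_{\bfn - \bfo_i}} kS_\bfn\bigr)
\end{equation*}
(with the second summand interpreted as zero when $n_i = 0$) by classifying morphisms $\bfn \to \bft + \bfo_i$ according to whether $1 \in [t_i + 1]$ lies in the image of the $i$-th component; the natural map $V \to \Sigma_i V$ corresponds to the first inclusion. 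Tensoring with $W$ over $kS_\bfn$ then yields $\Sigma_i V \cong V \oplus (M(\bfn - \bfo_i) \otimes_{kS_{\bfn - \bfo_i}} W)$, a direct sum of two basic relative projectives, and also identifies $D_i V$ with the second summand.

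The main obstacle is verifying the $S_\bfn$-equivariant form of the decomposition of $\Sigma_i M(\bfn)$ used in part $(4)$: one must track how the right $S_\bfn$-action on $\C(\bfn, \bft + \bfo_i)$ permutes the cases in the above classification and confirm that Case B assembles into the induced module $M(\bfn - \bfo_i) \otimes_{kS_{\bfn - \bfo_i}} kS_\bfn$. The rest of the argument is essentially routine long-exact-sequence chasing and invocations of exactness of the shift functors.
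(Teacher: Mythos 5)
Your arguments for Parts (1)--(3) run along essentially the same lines as the paper's, differing only in presentation: for (1) you tensor up an entire free $kS_\bfn$-resolution $F_\bullet \to W$ at once instead of passing to a single syzygy and iterating; for (2) you phrase torsion-freeness via injectivity of the induced map on right-orbit sets $\C(\bfn,\bft)/S_\bfn \to \C(\bfn,\bfl)/S_\bfn$ rather than through the paper's explicit computation with the elements $\bfg_i \otimes u_i$; and for (3) you bound $\gd(K)$ and inspect $K$ in low degrees directly rather than comparing supports of $H_0(K)$ and $K$. These are routine reformulations and are all fine.

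Part (4) is where you diverge genuinely, and where the gap lies. The paper's proof takes a free cover $0 \to K \to P \to V \to 0$, applies $D_i$ (which stays exact by Statement (6) of Lemma \ref{torsion modules} since $V$ is torsion free), notes all three terms are generated at $\bfn - \bfo_i$, reads off $\hd_1(D_iV) \leqslant \gd(D_iV)$ from the long exact sequence, and invokes Part (3); then $\Sigma_iV$ is relative projective as an extension of $V$ by $D_iV$. You instead aim for the explicit $(k\C, kS_\bfn)$-bimodule splitting
\begin{equation*}
\Sigma_i M(\bfn) \cong M(\bfn) \oplus \bigl( M(\bfn - \bfo_i) \otimes_{kS_{\bfn - \bfo_i}} kS_\bfn \bigr),
\end{equation*}
from which $\Sigma_iV \cong V \oplus (M(\bfn - \bfo_i) \otimes_{kS_{\bfn-\bfo_i}} W)$ and the identification of $D_iV$ with the second summand follow at once. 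This is a sharper conclusion than Part (4) itself --- indeed the paper records exactly this as a remark immediately after the lemma --- and your approach, if completed, would yield it directly. But, as you acknowledge, you have not verified the $S_\bfn$-equivariance of the splitting: Statement (1) of Lemma \ref{basic properties of sigma} only gives $\Sigma_i M(\bfn) \cong M(\bfn) \oplus M(\bfn - \bfo_i)^{\oplus n_i}$ as left $k\C$-modules, and without controlling the right $S_\bfn$-action on the second piece (confirming that the morphisms $\bfn \to \bft + \bfo_i$ whose $i$-th component hits $1$ organize themselves into the induced bimodule $M(\bfn - \bfo_i) \otimes_{kS_{\bfn - \bfo_i}} kS_\bfn$, not merely into $n_i$ abstract copies) you cannot legitimately tensor with $W$ over $kS_\bfn$ and conclude. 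That verification is the entire content of the step on which your Part (4) rests, so as written the proof has a real gap; you should either carry out the orbit-tracking you outline, or fall back on the paper's homological argument, which sidesteps the equivariance question entirely.
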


\begin{proof}
(1): Suppose that $V$ is basic relative projective module. That is, $V \cong M(\bfn) \otimes _{k S_{\bfn}} V_{\bfn}$. Take a short exact sequence $0 \to W \to P \to V_{\bfn} \to 0$ of $k S_{\bfn}$-modules such that $P$ is projective. Applying the functor $M(\bfn) \otimes _{k S_{\bfn}} -$ which is exact by Statement (1) of Lemma \ref{basic combinatorics}, we get a short exact sequence
\begin{equation*}
0 \to M(\bfn) \otimes _{k S_{\bfn}} W \to M(\bfn) \otimes _{k S_{\bfn}} P \to M(\bfn) \otimes _{k S_{\bfn}} V_{\bfn} \cong V \to 0.
\end{equation*}
Applying the functor $k \C / \mm \otimes_{k \C} -$ we recover the original short exact sequence. That is, $H_1(V) = 0$. Replacing $V$ by $V' = M(\bfn) \otimes _{k S_{\bfn}} W$ we deduce that $H_2(V) = 0$. Recursively, for every $s \geqslant 1$, one gets $H_s(V) = 0$.

Conversely, suppose that $H_1(V) = 0$. Since $V$ is generated by $V_{\bfn}$, there is a short exact sequence
\begin{equation*}
0 \to K \to M(\bfn) \otimes _{k S_{\bfn}} V_{\bfn} \to V \to 0.
\end{equation*}
The long exact sequence
\begin{equation*}
\ldots \to H_1(V) = 0 \to H_0(K) \to H_0 (M(\bfn) \otimes _{k S_{\bfn}} V_{\bfn}) = V_{\bfn} \to H_0(V) = V_{\bfn} \to 0
\end{equation*}
implies that $H_0(K) = 0$. That is, $K = 0$, and hence $V$ is relative projective.

(2): Since $V$ is relative projective, without loss of generality we assume that $V = M(\bfn) \otimes _{k S_{\bfn}} V_{\bfn}$. Take an arbitrary $\bfl \geqslant \bfn$, an element $0 \neq v \in V_{\bfl}$, and a morphism $\bff \in \C (\bfl, \bft)$ with $\bft > \bfl$. We want to show that $\bff \cdot v \neq 0$.

Note that $V_{\bfl} = \C(\bfn, \bfl) \otimes_{k S_{\bfn}} V_{\bfn}$, and $\C(\bfn, \bfl)$ is a right free $k S_{\bfn}$-module by Statement (1) of Lemma \ref{basic combinatorics}. Therefore, $v$ can be written as $(\bfg_1 \otimes u_1) + \ldots + (\bfg_s \otimes u_s)$ such that each $u_i \in V_{\bfn}$ is nonzero, and moreover $\bfg_i$ and $\bfg_j$ are contained in distinct orbits of the right $S_{\bfn}$-set $\C (\bfn, \bfl)$ if $i \neq j$. Consequently,
\begin{equation*}
\bff \cdot v = (\bff \bfg_1 \otimes u_1) + \ldots + (\bff \bfg_s \otimes u_s).
\end{equation*}

We claim that $\bff \bfg_i$ and $\bff \bfg_j$ are contained in distinct orbits of the right $S_{\bfn}$-set $\C (\bfn, \bft)$ if $i \neq j$. Indeed, if $\bff \bfg_i$ and $\bff \bfg_j$ are in the same orbit, there exists an automorphism $\bfh \in S_{\bfn}$ such that $\bff \bfg_i \bfh = \bff \bfg_j$. By Statement (2) of Lemma \ref{basic combinatorics}, $\bfg_i \bfh = \bfg_j$, contradicting the assumption imposed on $\bfg_i$ and $\bfg_j$. Therefore, those $\bff \bfg_i \otimes u_i$ are nonzero and lie in distinct direct summands of $\C (\bfn, \bfl) \otimes_{k S_{\bfn}} V_{\bfn}$. Consequently, $\bff \cdot v \neq 0$.

(3): Again, consider the short exact sequence
\begin{equation*}
0 \to K \to M(\bfn) \otimes _{k S_{\bfn}} V_{\bfn} \to V \to 0
\end{equation*}
and its induced long exact sequence
\begin{equation*}
\ldots \to 0 \to H_1(V) \to H_0(K) \to H_0 (M(\bfn) \otimes _{k S_{\bfn}} V_{\bfn}) = V_{\bfn} \to H_0(V) = V_{\bfn} \to 0.
\end{equation*}
Since $\hd_1(V) \leqslant \gd(V)$, we know that $H_0 (K)$ is only supported on objects whose degrees are at most $\gd(V) = \deg {\bfn}$. But from the short exact sequence we see that $K$ is only supported on objects strictly greater than $\bfn$. The only possibility is that $K = 0$.

(4): Consider a short exact sequence $0 \to K \to P \to V \to 0$ such that $P$ is a projective $k \C$-module generated by $P_{\bfn}$. By the previous arguments we know that all terms in it are basic relative projective modules generated by their values on $\bfn$, and hence are torsion free. By Statement (6) of Lemma \ref{torsion modules}, for each $i \in [m]$, we get a short exact sequence $0 \to D_iK \to D_iP \to D_i V \to 0$. Since $D_i V = 0$ whenever $n_i = 0$, without loss of generality we assume that $n_i > 0$. Then $D_i V$ is generated by its value on the object $\bfn - \bfo_i$ since so is $D_iP$. Replacing $V$ by $K$ one knows that $D_i K$ is also generated by its value on $\bfn - \bfo_i$. Since $D_i P$ is projective, the long exact sequence of homology groups induced by $0 \to D_i K \to D_i P \to D_i V \to 0$ tells us that $\gd(D_i V) = \gd(D_i P) \geqslant \gd (D_iK) \geqslant \hd_1(D_iV)$. By the previous statement, $D_iV$ is relative projective. But $D_i V = \Sigma_i V / V$, so $\Sigma_i V$ is relative projective as well since so is $V$.
\end{proof}

\begin{remark} \normalfont
For $m = 1$, Statements (2) and (4) have been verified in \cite[Lemma 2.2]{N}. Actually, slightly modifying the argument there, one can prove the following stronger conclusion: If $V$ is a basic relative projective module generated by its value on a certain object $\bfn$, then for $i \in [m]$,
\begin{equation*}
\Sigma_i V \cong V \oplus (M(\bfn - \bfo_i) \otimes _{k S_{\bfn - \bfo_i}} V_{\bfn}),
\end{equation*}
whenever $n_i > 0$. For $n_i = 0$, $\Sigma_i V \cong V$.

From the proof we also observe the following fact: Let $V$ be a basic relative projective $\C$-module generated by $V_n$, and $W_n$ be a $k S_{\bfn}$-module. Then $H_0 (M(\bfn) \otimes_{k S_{\bfn}} W_n) \cong W_n$, and $M(\bfn) \otimes _{k S_{\bfn}} H_0(V) \cong V$.
\end{remark}

Since every relative projective module has a filtration whose factors are basic relative projective modules, from the above lemma one gets:

\begin{proposition} \label{properties of relative projective modules}
Let $V$ be a $\C$-module with $\gd(V) < \infty$. Then $V$ is relative projective if and only if $H_1(V) = 0$, and if and only if $H_s(V) = 0$ for all $s \geqslant 1$. Moreover, if $V$ is relative projective, then it is torsion free, and $\Sigma_i V$ and $D_i V$ are relative projective as well for $i \in [m]$.
\end{proposition}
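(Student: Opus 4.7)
The plan is to combine Lemma \ref{properties of basic relative projective modules} for single-object-generated modules with a filtration argument for general $V$. I split the proof into three pieces corresponding to the three statements of the proposition.

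For the forward implication ``$V$ relative projective $\Rightarrow H_s(V) = 0$ for all $s \geqslant 1$'', I induct on the length $n$ of a witness filtration $0 = V^0 \subseteq \cdots \subseteq V^n = V$. Each short exact sequence $0 \to V^{i-1} \to V^i \to V^i/V^{i-1} \to 0$, together with the associated long exact $\Tor$ sequence and Lemma \ref{properties of basic relative projective modules}(1) applied to the basic factor $V^i/V^{i-1}$, propagates the vanishing $H_s(V^{i-1}) = 0$ to $H_s(V^i) = 0$ for all $s \geqslant 1$. In particular $H_1(V) = 0$; the reverse implication $H_s(V)=0$ for all $s \geqslant 1$ $\Rightarrow$ $H_1(V)=0$ is trivial.

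For the converse ``$H_1(V) = 0 \Rightarrow V$ relative projective'', I induct on $N = \gd(V)$, the base case $N = -1$ ($V = 0$) being trivial. For $N \geqslant 0$, enumerate the finitely many objects of degree $N$ as $\bfn_1, \ldots, \bfn_r$, and construct
\begin{equation*}
V_0 \subseteq V_1 \subseteq \cdots \subseteq V_r = V,
\end{equation*}
where $V_0$ is the submodule generated by $\bigcup_{\deg(\bft) < N} V_{\bft}$ and $V_j = V_{j-1} + \langle V_{\bfn_j}\rangle$ for $j \geqslant 1$. Since $V$ is generated in degrees $\leqslant N$, $V_r = V$, and each quotient $Q_j = V_j/V_{j-1}$ is generated at the single object $\bfn_j$. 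I would then prove simultaneously by \emph{descending} induction on $j$ that $H_1(V_j) = 0$ and $Q_j$ is basic relative projective. At each step the long exact sequence from $0 \to V_{j-1} \to V_j \to Q_j \to 0$ combined with $H_1(V_j) = 0$ yields an injection $H_1(Q_j) \hookrightarrow H_0(V_{j-1})$; because $V_{j-1}$ is generated in degrees $\leqslant N$, this forces $\hd_1(Q_j) \leqslant N = \gd(Q_j)$, and Lemma \ref{properties of basic relative projective modules}(3) identifies $Q_j$ as basic relative projective. Feeding the resulting $H_2(Q_j) = 0$ (from Lemma \ref{properties of basic relative projective modules}(1)) back into the same long exact sequence gives $H_1(V_{j-1}) = 0$, completing the descending step. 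When $j$ reaches $0$, $V_0$ satisfies $\gd(V_0) < N$ and $H_1(V_0) = 0$, so is relative projective by the outer inductive hypothesis; stacking its filtration with the basic factors $Q_1, \ldots, Q_r$ gives a relative projective filtration of $V$. This descending-induction step, and in particular the feedback of $H_2(Q_j) = 0$ to propagate $H_1 = 0$ down the filtration, is the technical heart of the proof.

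For the ``moreover'' assertions, let $0 = V^0 \subseteq \cdots \subseteq V^n = V$ be a relative projective filtration of $V$. Torsion freeness of $V$ follows by induction on $n$: each factor $V^i/V^{i-1}$ is torsion free by Lemma \ref{properties of basic relative projective modules}(2), and Lemma \ref{torsion modules}(5) says that extensions of torsion-free modules remain torsion-free. For $\Sigma_i V$, applying the exact functor $\Sigma_i$ gives a filtration of $\Sigma_i V$ whose factors $\Sigma_i(V^j/V^{j-1})$ are relative projective by Lemma \ref{properties of basic relative projective modules}(4); refining each factor into basic relative projective pieces yields a relative projective filtration of $\Sigma_i V$. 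For $D_i V$, torsion freeness of each basic factor together with Lemma \ref{torsion modules}(6) ensures $D_i$ preserves exactness of the filtration sequences, so that applying $D_i$ gives a filtration with factors $D_i(V^j/V^{j-1})$, again relative projective by Lemma \ref{properties of basic relative projective modules}(4) and refinable to basic factors.
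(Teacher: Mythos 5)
Your proof is correct and, where the paper gives details, tracks the paper's own argument. The paper disposes of the equivalence ``$V$ relative projective $\iff H_1(V)=0 \iff H_s(V)=0$ for all $s\geqslant 1$'' with the single word ``clear,'' whereas you supply the full argument; your descending-induction scheme (split off the degree-$N$ generators one object at a time, use the long exact sequence and $H_1(V_j)=0$ to force $\hd_1(Q_j)\leqslant N=\gd(Q_j)$ via the injection $H_1(Q_j)\hookrightarrow H_0(V_{j-1})$, invoke Lemma~\ref{properties of basic relative projective modules}(3) to see $Q_j$ is basic, then feed $H_2(Q_j)=0$ back to obtain $H_1(V_{j-1})=0$) is exactly the technique used in the $\FI$-module literature the authors cite, and indeed mirrors the filtration construction the paper itself uses in the proof of Lemma~\ref{recursion}(1); it is the right way to fill the elided step. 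Your handling of the ``moreover'' clauses is identical to the paper's: torsion-freeness from Lemma~\ref{torsion modules}(5) plus Lemma~\ref{properties of basic relative projective modules}(2); the $\Sigma_iV$ filtration by exactness of $\Sigma_i$ and Lemma~\ref{properties of basic relative projective modules}(4); and the $D_iV$ filtration via torsion-freeness of the basic factors and Lemma~\ref{torsion modules}(6). No gaps.

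One cosmetic remark: in the $D_iV$ step you say the factors $D_i(V^j/V^{j-1})$ are ``relative projective \ldots\ and refinable to basic factors.'' In fact, the proof of Lemma~\ref{properties of basic relative projective modules}(4) (see also the remark following it) shows $D_i$ of a basic relative projective module generated at $\bfn$ is already basic, isomorphic to $M(\bfn-\bfo_i)\otimes_{kS_{\bfn-\bfo_i}}V_{\bfn}$ when $n_i>0$ and zero otherwise, so the refinement is unnecessary; this is what the paper's proof asserts.
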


\begin{proof}
The first statement is clear, and the first half of the second statement follows from Statement (5) of Lemma \ref{torsion modules}. To prove the second half of the second statement, suppose that $V^{\bullet}: 0 = V^0 \subseteq V^1 \subseteq \ldots \subseteq V^n = V$ is a filtration of $V$ such that each factor is a basic relative projective module. Applying the $i$-th shift functor $\Sigma_i$ we get a filtration $\Sigma_i V^{\bullet}$ of $\Sigma_i V$ such that each factor is still a relative projective module (might not be a basic relative projective module) by Statement (4) of the previous lemma. Moreover, since relative projective modules are torsion free, by Statement (6) of Lemma \ref{torsion modules}, applying $D_i$ we get a filtration $D_i V^{\bullet}$ of $D_i V$ such that each factor is a basic relative projective module.
\end{proof}

The following recursion lemma generalizes results in \cite[Lemmas 3.12 and 4.2]{LY}, and plays a vital role in the proofs of Theorems \ref{homological characterizations of relative projective modules} and \ref{relative projective complexes}.

\begin{lemma} \label{recursion}
Let $V$ be a torsion free $\C$-module with $\gd(V) < \infty$. One has:
\begin{enumerate}
\item If $\bfd V$ is relative projective, so is $V$.
\item If $V$ is relative projective, and $V'$ is relative projective submodule, then $V/V'$ is relative projective.
\end{enumerate}
\end{lemma}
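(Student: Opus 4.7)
The plan is to prove both statements by a joint induction on $d = \gd(V)$, in which at each level I first establish (1) and then (2), invoking the inductive instances of (2) at level $\leqslant d-1$ in the proof of (1) at level $d$, and the newly proved (1) at level $d$ together with the inductive (2) at level $\leqslant d-1$ in the proof of (2) at level $d$. The key ingredients are Proposition \ref{properties of relative projective modules} (which characterizes relative projectivity by $H_1 = 0$ and shows shifts and derivatives preserve relative projectives), the exact sequence $0 \to V^{\oplus m} \to \bfs V \to \bfd V \to 0$ available for torsion free $V$ (Lemma \ref{torsion modules} (3)), the strict drop $\gd(\bfd V) = d - 1$ from Lemma \ref{basic properties of sigma} (3), and the exactness of $\bfd$ on short exact sequences with torsion free quotient from Lemma \ref{torsion modules} (6).

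The base case $d \leqslant 0$ is immediate: a torsion free module generated in degree $\mathbf{0}$ is forced to equal $M(\mathbf{0}) \otimes_k V_{\mathbf{0}}$, since any nontrivial kernel of the natural cover would produce a torsion element. For the inductive step of (1), I would construct a basic relative projective submodule $V' \subseteq V$ satisfying $V/V'$ torsion free with $\gd(V/V') < d$. The candidate is the image of the canonical map $M(\bfn) \otimes_{k S_\bfn} V_\bfn \to V$, where $\bfn$ is minimal with $V_\bfn \neq 0$; the freeness of $\C(\bfn, \bfl)$ as a right $k S_\bfn$-module (Lemma \ref{basic combinatorics} (1)), combined with torsion freeness of $V$, should render this map injective by a variant of the orbit argument in Statement (2) of Lemma \ref{properties of basic relative projective modules}, while minimality of $\bfn$ keeps $V/V'$ torsion free and lowers $\gd$. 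Applying $\bfd$ to $0 \to V' \to V \to V/V' \to 0$ produces $0 \to \bfd V' \to \bfd V \to \bfd(V/V') \to 0$; both $\bfd V$ (hypothesis) and $\bfd V'$ (Proposition \ref{properties of relative projective modules}) are relative projective of generating degree $\leqslant d-1$, so the inductive instance of (2) forces $\bfd(V/V')$ relative projective, and the inductive instance of (1), applied to $V/V'$, gives $V/V'$ relative projective. Splicing the filtrations of $V'$ and $V/V'$ then furnishes a filtration of $V$.

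For the inductive step of (2), the first task is to reduce to the case $V'$ basic relative projective by induction on the length of a filtration of $V'$, and the central step is to show $V/V'$ is torsion free. Once torsion freeness of $V/V'$ is established by analyzing the orbit-theoretic structure of basic relative projective modules as in Lemma \ref{properties of basic relative projective modules}, the rest mirrors (1): $\bfd$ applied to $0 \to V' \to V \to V/V' \to 0$ yields a short exact sequence whose first two terms are relative projective of generating degree $\leqslant d-1$, the inductive instance of (2) renders $\bfd(V/V')$ relative projective, and part (1) upgrades this to $V/V'$ itself. The main obstacle throughout is the delicate construction in part (1) — verifying that the cover $M(\bfn) \otimes_{k S_\bfn} V_\bfn \to V$ is genuinely injective and that the resulting $V/V'$ remains torsion free — since torsion freeness of $V$ alone does not automatically translate into linear independence of images under distinct orbit representatives; this is where the combinatorial freeness provided by Lemma \ref{basic combinatorics} must be deployed with care.
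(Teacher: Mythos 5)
Two steps in your outline for (1) do not hold up. First, the injectivity of the canonical map $M(\bfn) \otimes_{kS_{\bfn}} V_{\bfn} \to V$ does not follow from torsion freeness of $V$, which is all you invoke. Torsion freeness rules out a single morphism killing a nonzero element, but it says nothing about relations of the form $\bff_1 \cdot v_1 + \ldots + \bff_s \cdot v_s = 0$ with the $\bff_i$ lying in distinct right $S_{\bfn}$-orbits, and such relations are precisely what a kernel of this map consists of. For $m = 1$, the module $\mm M(\mathbf{0}) \subset M(\mathbf{0})$ is torsion free and generated in degree $1$, yet $M(1) \to \mm M(\mathbf{0})$ has a large kernel. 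The orbit argument you borrow from Lemma \ref{properties of basic relative projective modules}(2) works there because the ambient module already \emph{is} a basic relative projective module, so the right $S_{\bfn}$-orbit decomposition of $\C(\bfn, \bfl)$ produces a genuine direct sum decomposition of the target; for a general torsion free $V$ this structure is unavailable, and one would have to actually use the hypothesis that $\bfd V$ is relative projective, which you do not. Second, $\gd(V/V')$ need not drop. Take $V = M(\mathbf{0}) \oplus M(\bft)$ with $\deg(\bft) > 0$: it is free, $\bfd V$ is free, the unique minimal support point is $\mathbf{0}$, $V' = M(\mathbf{0})$, and $V/V' = M(\bft)$ has $\gd(V/V') = \deg(\bft) = \gd(V)$. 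Your appeal to ``the inductive instance of (1) applied to $V/V'$'' is therefore circular, and induction on $\gd(V)$ alone does not close.

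The root cause is the choice of $\bfn$ minimal rather than maximal. Peeling off the bottom stratum removes a low-degree generator but leaves the top ones untouched, so $\gd$ can stay fixed. The paper peels from the top instead: it chooses $\bfn$ with $\deg(\bfn) = \gd(V)$, lets $V'$ be generated by the lower strata of $H_0(V)$, proves that the quotient $V'' = V/V'$ is basic relative projective from the bound $\hd_1(V'') \leqslant \gd(V'')$ (which is where the hypothesis on $\bfd V$ enters), and inducts on the cardinality of the support of $H_0(V)$, a quantity that does decrease by one at each step. For part (2) your plan is closer to the paper's, but the decisive claim that $V/V'$ is torsion free is asserted rather than proved; the paper obtains it with a short snake-lemma argument identifying the kernel of $V/V' \to \bfs(V/V')$ with a torsion submodule of $\bfd V'$, which must vanish because $\bfd V'$ is relative projective and hence torsion free.
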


\begin{proof}
The conclusions hold for $V = 0$ trivially, so we may assume that $V \neq 0$.

(1): Since $\gd(V)$ is finite, $H_0(V)$ is only supported on a set $S$ of finitely many objects in $\Nm$. We use induction on the cardinality $|S|$. The conclusion holds clearly if $|S| = 0$. For $|S| \geqslant 1$, we choose an object $\bfn \in S$ such that $\deg(\bfn)$ is maximal; that is, $\deg(\bfn) = \gd(V)$ (of course, this choice might not be unique). Let $V'$ be the submodule of $V$ generated by its values on objects in $S \setminus \{ \bfn \}$. Then $V'' = V / V'$ is nonzero and is generated by its value on $\bfn$.

Consider the short exact sequence $0 \to V' \to V \to V'' \to 0$. We claim that $V''$ is a basic relative projective module. To see this, from the long exact sequence of homology groups one has
\begin{equation*}
\hd_1(V'') \leqslant \max \{ \gd(V'), \, \hd_1(V) \} \leqslant \max \{ \gd(V), \, \hd_1(V) \}.
\end{equation*}
since $\gd(V') \leqslant \gd(V)$ by our construction of $V'$. Furthermore, let $0 \to W \to P \to V \to 0$ be a short exact sequence of $\C$-modules such that $P$ is a free $\C$-module satisfying $\gd(V) = \gd(P)$. Applying $\bfd$ we get another short exact sequence $0 \to \bfd W \to \bfd P \to \bfd V \to 0$ such that $\bfd P$ is also free and satisfies $\gd(\bfd V) = \gd (\bfd P)$. Then one has
\begin{equation*}
\hd_1(V) \leqslant \gd(W) \leqslant \gd(\bfd W) + 1 \leqslant \max \{ \hd_1(\bfd V), \, \gd(\bfd V) \} + 1 \leqslant \gd(\bfd V) + 1 = \gd(V)
\end{equation*}
since $\hd_1(\bfd V) = -1$. Putting the above two inequalities together we conclude that $\hd_1(V'') \leqslant \gd(V) = \gd(V'')$. By Statement (3) of Lemma \ref{properties of basic relative projective modules}, $V''$ is a basic relative projective module as claimed.

The conclusion follows after we show that $V'$ is relative projective. By the induction hypothesis, it suffices to show that $\bfd V'$ is relative projective. Since $V''$ is relative projective, and hence torsion free, by Statement (6) of Lemma \ref{torsion modules}, we get a short exact sequence $0 \to \bfd V' \to \bfd V \to \bfd V'' \to 0$. Note that $\bfd V$ is relative projective by the given condition, and so is $\bfd V''$ by Statement (4) of Lemma \ref{properties of basic relative projective modules}. The long exact sequence of homology groups tells us that $H_1(\bfd V') = 0$, so $\bfd V'$ is relative projective by Proposition \ref{properties of relative projective modules}.

(2): We use induction on $\gd(V)$. The conclusion holds if $\gd(V) = -1$; that is, $V = 0$. For $\gd(V) \geqslant 0$, consider the following commutative diagram:
\begin{equation*}
\xymatrix{
0 \ar[r] & V' \ar[r] \ar[d] & V \ar[r] \ar[d] & V/V' \ar[r] \ar[d]^{\delta} & 0\\
0 \ar[r] & \bfs V' \ar[r] & \bfs V \ar[r] & \bfs (V/V') \ar[r] & 0.
}
\end{equation*}
The kernel of $\delta$ is a torsion module, and is isomorphic to a submodule of $\bfd V'$ by the snake Lemma. But $V'$ is relative projective, so $\bfd V'$ is also relative projective, and hence is torsion free. This happens if and only if this kernel is 0. That is, $V/V'$ is torsion free.

By Statement (6) of Lemma \ref{torsion modules}, we get a short exact sequence $0 \to \bfd V' \to \bfd V \to \bfd (V/V') \to 0$. By Proposition \ref{properties of relative projective modules}, both $\bfd V'$ and $\bfd V$ are relative projective. Since $\gd(V) > \gd(\bfd V)$, by the induction hypothesis, $\bfd (V/V')$ is relative projective as well. The previous statement tells us that $V/V'$ is relative projective.
\end{proof}

\subsection{A proof of Theorem \ref{homological characterizations of relative projective modules}.}

Now we are ready to prove various homological characterizations of relative projective $\C$-modules.

\begin{proof}[A proof of Theorem \ref{homological characterizations of relative projective modules}]
The equivalence of the first three statements is implied by Proposition \ref{properties of relative projective modules}. Clearly, (2) implies (4). The argument showing that (4) implies (1) is completely the same as that of \cite[Proposition 4.4]{LY}. For the convenience of the reader, we include it here. Of course, we can assume that $s > 1$.

Take a projective resolution $P^{\bullet} \to 0$ and let $Z^i$ be the $i$-th cycle. Then we have
\begin{equation*}
0 = H_s(V) = H_1 (Z^{s-1}).
\end{equation*}
Consequently, $Z^{s-1}$ is a relative projective module. Applying the previous lemma to the short exact sequence
\begin{equation*}
0 \to Z^{s-1} \to P^{s-2} \to Z^{s-2} \to 0
\end{equation*}
one deduces that $Z^{s-2}$ is relative projective as well. The conclusion follows by recursion.
\end{proof}

A useful corollary is:

\begin{corollary} \label{ses of relative projective modules}
Let $0 \to U \to V \to W \to 0$ be a short exact sequence of $\C$-modules with finite generating degrees. Then if two terms in it are relative projective, so is the third term.
\end{corollary}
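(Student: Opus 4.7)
The approach is to lean entirely on Theorem~\ref{homological characterizations of relative projective modules}, which gives the clean criterion that a $\C$-module $X$ with $\gd(X)<\infty$ is relative projective if and only if $H_s(X)=0$ for some (equivalently, all) $s\geqslant 1$. Since $H_\bullet = \Tor^{k\C}_\bullet(k\C/\mm,-)$, the short exact sequence $0\to U\to V\to W\to 0$ gives a long exact sequence
\begin{equation*}
\cdots \to H_{s+1}(W)\to H_s(U)\to H_s(V)\to H_s(W)\to H_{s-1}(U)\to\cdots\to H_0(W)\to 0.
\end{equation*}
The plan is simply to split into three cases according to which two of $U,V,W$ are given to be relative projective, and in each case use vanishings in the long sequence to kill some $H_s$ of the remaining term, then apply Theorem~\ref{homological characterizations of relative projective modules}.

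Concretely: if $U$ and $W$ are relative projective, then $H_1(U)=H_1(W)=0$, so the segment $H_1(U)\to H_1(V)\to H_1(W)$ yields $H_1(V)=0$, whence $V$ is relative projective. If $V$ and $W$ are relative projective, then $H_s(V)=H_s(W)=0$ for all $s\geqslant 1$, so the segment $H_2(W)\to H_1(U)\to H_1(V)$ yields $H_1(U)=0$, whence $U$ is relative projective. If $U$ and $V$ are relative projective, then $H_s(U)=H_s(V)=0$ for all $s\geqslant 1$, so the segment $H_2(V)\to H_2(W)\to H_1(U)$ yields $H_2(W)=0$; here I invoke statement (4) of Theorem~\ref{homological characterizations of relative projective modules} to conclude that $W$ is relative projective. (This last case is also immediate from Lemma~\ref{recursion}(2).)

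There is no real obstacle; the corollary is a formal consequence of the homological characterization combined with the long exact sequence of Tor. The only detail worth flagging is that the hypothesis $\gd(U)<\infty$ is genuinely needed in the second case, since a submodule of a module with finite generating degree need not inherit the property over an arbitrary commutative ring $k$; in the first and third cases the finiteness of the generating degree of the unknown term follows automatically from the short exact sequence via $\gd(V)\leqslant\max\{\gd(U),\gd(W)\}$ and $\gd(W)\leqslant\gd(V)$.
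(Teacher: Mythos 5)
Your proof is correct and takes essentially the same route as the paper: the paper's entire proof is ``Consider the long exact sequence of homology groups and apply the above theorem,'' which is precisely the argument you carried out in detail. Your remark about the necessity of the hypothesis $\gd(U)<\infty$ in the second case is a fair and worthwhile observation, since submodules of finitely generated $\C$-modules need not have finite generating degree over an arbitrary commutative ring.
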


\begin{proof}
Consider the long exact sequence of homology groups and apply the above theorem.
\end{proof}

\subsection{Projective dimensions}

In this subsection we give an application of the previous theorem, classifying finitely generated $\C$-modules with finite projective dimension over commutative Noetherian rings. Recall that the finitistic dimension of a commutative Noetherian ring $k$, denoted by $\fdim k$, is defined to be
\begin{equation*}
\sup \{ \pd _{k} (T) \mid T \text{ is a finitely generated $k$-module and } \pd_{k} (T) < \infty \}.
\end{equation*}

\begin{proof}[A proof of Theorem \ref{projective dimension}]
The proof for $\FI$ described in \cite[Subsection 4.2]{LY} actually works for arbitrary $m \geqslant 1$ with small modifications. We give a sketch.

First, if $\pd_{k \C} (V)$ is finite, then it has a finite projective resolution. By the previous theorem, $V$ must be a relative projective module. Let $S$ be the set of objects $\bfn$ such that $(H_0(V))_{\bfn} \neq 0$. Then $V$ has a filtration such that each factor is a basic relative projective module $M(\bfn) \otimes _{k S_{\bfn}} W_{\bfn}$, $\bfn \in S$. As the proof of \cite[Lemma 4.7]{LY} (replacing $n$ in that proof by a maximal element in $S$), an induction on the size of $S$ asserts that
\begin{equation*}
\pd_{k \C} (V) \geqslant \pd_{k \C} (M(\bfn) \otimes _{k S_{\bfn}} W_{\bfn}) = \pd_{k S_{\bfn}} (W_{\bfn}) = \pd_{k} (W_{\bfn})
\end{equation*}
for $\bfn \in S$, where the two equalities can be shown as in \cite[Lemmas 4.5 and 4.6]{LY}. But from a standard homological argument one also has
\begin{equation*}
\pd_{k \C} (V) \leqslant \max \{ \pd_{k \C} (M(\bfn) \otimes _{k S_{\bfn}} W_n) \} _{\bfn \in S}.
\end{equation*}
Therefore, the description of $V$ holds if we assume that $\pd_{k \C} (V)$ is finite. The other direction is obvious.
\end{proof}

For semisimple rings or finite dimensional local algebras, one has:

\begin{corollary}
Let $k$ be a commutative Noetherian ring whose finitistic dimension is 0. Then a finitely generated $\C$-module has finite projective dimension if and only if it is projective.
\end{corollary}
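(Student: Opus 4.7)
The plan is to deduce this corollary almost immediately from Theorem \ref{projective dimension}. One direction is trivial: if $V$ is projective, then $\pd_{k\C}(V) = 0 < \infty$. For the converse, suppose $V$ is a finitely generated $\C$-module with $\pd_{k\C}(V) < \infty$. Theorem \ref{projective dimension} then supplies a finite set $S$ of objects and finitely generated $kS_{\bfn}$-modules $W_{\bfn}$ (for $\bfn \in S$) such that $V$ is a relative projective module with filtration factors $M(\bfn) \otimes_{kS_{\bfn}} W_{\bfn}$, and moreover
\[
\pd_{k\C}(V) \;=\; \max\{\pd_k(W_{\bfn}) : \bfn \in S\} \;\leqslant\; \fdim k.
\]
Since the hypothesis is $\fdim k = 0$, this forces $\pd_{k\C}(V) = 0$, i.e.\ $V$ is projective. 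This is the entire argument.

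There is essentially no obstacle: the content of the corollary is already encoded in Theorem \ref{projective dimension}, because that theorem not only characterizes modules of finite projective dimension but also computes this projective dimension as the maximum of the $\pd_k(W_{\bfn})$, and bounds it by $\fdim k$. If one wanted a more hands-on account, one could alternatively note that $\fdim k = 0$ means every finitely generated $k$-module of finite projective dimension is projective, so each $W_{\bfn}$ is a projective $k$-module; using the equality $\pd_{kS_{\bfn}}(W_{\bfn}) = \pd_k(W_{\bfn})$ invoked in the proof of Theorem \ref{projective dimension}, each $W_{\bfn}$ is a projective $kS_{\bfn}$-module, hence each factor $M(\bfn) \otimes_{kS_{\bfn}} W_{\bfn}$ is a direct summand of a free $\C$-module and therefore projective; finally, all the short exact sequences in the filtration of $V$ split, so $V$ itself is projective. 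Either route works, and the first is the cleanest.
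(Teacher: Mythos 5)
Your argument is correct and is exactly what the paper intends: the corollary is stated without a separate proof, implicitly following from Theorem \ref{projective dimension} via the equality $\pd_{k\C}(V)=\max\{\pd_k(W_{\bfn})\}\leqslant\fdim k=0$. Your alternative, more hands-on route is also valid, but the first argument is the one the paper has in mind.
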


\subsection{A proof of Theorem \ref{relative projective complexes}.}

In this subsection we prove Theorem \ref{relative projective complexes}. Let $k$ be a commutative Noetherian ring, and all $\C$-modules considered here are finitely generated. By Theorem \ref{noetherianity}, the category of finitely generated $\C$-modules is abelian.

Recall that $\td_i(V)$ for $i \in [m]$ and $\td(V)$ are defined in Definition \ref{torsion degree}. Moreover, by Lemma \ref{finite torsion degree}, these numbers are finite.

\begin{lemma} \label{shifted modules become torsion free}
Let $V$ be finitely generated $\C$-module over a commutative Noetherian ring and $i \in [m]$. Then $K_i \Sigma_i^{n_i} V = 0$ for $n_i \geqslant \td_i(V) + 1$. In particular, for $n > \td(V)$, the shifted module $\Sigma_1^n \ldots \Sigma_m^n V$ is torsion free.
\end{lemma}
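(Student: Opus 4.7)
The approach is to reduce everything to a single identity between $K_i$ and $\Sigma_i$ applied to values on individual objects, and then iterate. The key calculation I would carry out first is the identity
\begin{equation*}
(K_i \Sigma_i V)_{\bfn} = (K_i V)_{\bfn + \bfo_i}
\end{equation*}
at the level of $k$-modules. To see this, Lemma \ref{torsion modules}(2) characterizes $(K_i \Sigma_i V)_{\bfn}$ as the set of $v \in (\Sigma_i V)_{\bfn} = V_{\bfn + \bfo_i}$ killed by $\pi_{\bfn,i}$ acting through $\Sigma_i$, i.e.\ killed by $V(\iota_i(\pi_{\bfn,i}))$. A direct inspection of the definitions (\ref{iota}) and (\ref{pi}) shows $\iota_i(\pi_{\bfn,i}) = \pi_{\bfn + \bfo_i, i}$, so by Lemma \ref{torsion modules}(2) again this set equals $(K_i V)_{\bfn + \bfo_i}$.

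Iterating, $(K_i \Sigma_i^{n_i} V)_{\bfn} = (K_i V)_{\bfn + n_i \bfo_i}$ for every $\bfn \in \Nm$. Now if $n_i \geqslant \td_i(V) + 1$, then the $i$-th coordinate of $\bfn + n_i \bfo_i$ is at least $n_i > \td_i(V)$, so by the very definition of $\td_i(V)$ the group $(K_i V)_{\bfn + n_i \bfo_i}$ vanishes. Therefore $K_i \Sigma_i^{n_i} V = 0$, which is the first assertion.

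For the second assertion I would argue as follows. By Lemma \ref{basic properties of sigma}(4) the shift functors commute, so for any fixed $i \in [m]$ one may rewrite $\Sigma_1^n \cdots \Sigma_m^n V$ as $\Sigma_i^n U_i$, where $U_i$ is the composition of the remaining $\Sigma_j^n$ applied to $V$. By the first part it suffices to show $n \geqslant \td_i(U_i) + 1$, which will follow from the inequality $\td_i(\Sigma_j W) \leqslant \td_i(W)$ for every $j \neq i$ and every finitely generated $W$, iterated across the $m-1$ remaining directions. This inequality is obtained by exactly the same bookkeeping as in the first step: one checks $\iota_j(\pi_{\bfn,i}) = \pi_{\bfn + \bfo_j, i}$ when $j \neq i$, which gives $(K_i \Sigma_j W)_{\bfn} = (K_i W)_{\bfn + \bfo_j}$ and hence preserves the $i$-th coordinate, so the supremum defining $\td_i$ cannot grow. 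Combining these, $\td_i(U_i) \leqslant \td_i(V) \leqslant \td(V) < n$, so $K_i \Sigma_i^n U_i = 0$ for each $i \in [m]$, meaning $\bfk \Sigma_1^n \cdots \Sigma_m^n V = 0$, and Lemma \ref{torsion modules}(3) finishes the argument.

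The only subtle point, and hence the main thing to check carefully, is the identification $\iota_i(\pi_{\bfn,i}) = \pi_{\bfn + \bfo_i, i}$ (and its analogue for $j \neq i$); once that combinatorial identity is verified from the explicit formulas for $\iota_i$ and $\pi_{\bfn,i}$, everything else is a formal consequence of Lemma \ref{torsion modules}(2), Lemma \ref{basic properties of sigma}(4), and the definition of $\td_i$. No Noetherian hypothesis is strictly needed for the argument itself; it enters only through Lemma \ref{finite torsion degree} to guarantee that $\td_i(V)$ is finite so that the statement is nonvacuous.
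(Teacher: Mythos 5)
Your overall strategy matches the paper's in spirit: reduce the claim to a commutation isomorphism $K_i \Sigma_i \cong \Sigma_i K_i$ which shifts the $i$-th coordinate by one, then pass the other shift functors through using Lemma \ref{basic properties of sigma}(4). The paper invokes $\bfk\bfs \cong \bfs\bfk$ (from the proof of Lemma \ref{torsion modules}(4)) together with Remark \ref{description of KV}, whereas you compute directly at the level of objects, which is a perfectly reasonable alternative.

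However, the ``only subtle point'' you flag for careful checking is in fact false as stated. A direct computation with (\ref{iota}) and (\ref{pi}) shows that $\iota_i(\pi_{\bfn,i})$ and $\pi_{\bfn+\bfo_i,i}$ are \emph{different} morphisms $\bfn + \bfo_i \to \bfn + 2\bfo_i$: they agree on all components except the $i$-th, and while the $i$-th component of $\pi_{\bfn+\bfo_i,i}$ sends every $t \in [n_i+1]$ to $t+1$, the $i$-th component of $\iota_i(\pi_{\bfn,i})$ fixes $1$ and sends $t \mapsto t+1$ for $t \geqslant 2$. The paper's own proof of Lemma \ref{basic properties of sigma}(5) (the case $i = j$) records exactly this discrepancy and reconciles the two induced maps via the transposition of $1$ and $2$ in $S_{\bfn + 2\bfo_i}$. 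Fortunately, the identification $(K_i \Sigma_i V)_{\bfn} = (K_i V)_{\bfn + \bfo_i}$ you want is nevertheless correct; you should appeal to the \emph{second} equality in Lemma \ref{torsion modules}(2), which characterizes $K_iV$ by the vanishing of $\bff \cdot v$ for \emph{all} $\bff \in \C(\bfn, \bfn+\bfo_i)$. Since $\iota_i(\pi_{\bfn,i})$ is such a morphism (just not the designated one $\pi_{\bfn+\bfo_i,i}$), the two kernels coincide. The analogous claim for $j \neq i$, namely $\iota_j(\pi_{\bfn,i}) = \pi_{\bfn+\bfo_j,i}$, \emph{is} a genuine identity, so the bookkeeping in the second half of your argument, showing $\td_i(\Sigma_j W) \leqslant \td_i(W)$ for $j \neq i$, stands as written.
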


\begin{proof}
From the proof of Lemma \ref{torsion modules} and Remark \ref{description of KV} one easily sees that $\Sigma_i^{n_i} K_iV = 0$ for $n_i > \td_i(V)$. As explained in the proof of Statement (4) of Lemma \ref{torsion modules}, one has $K_i \Sigma_i^{n_i} V \cong \Sigma_i^{n_i} K_iV = 0$.

If $n > \td(V)$, since $\td(V) \geqslant \td_i(V)$ for all $i \in [m]$, one has $K_i \Sigma_i^n V = 0$ for all $i \in [m]$. Therefore,
\begin{equation*}
K_i \Sigma_1^n \ldots \Sigma_m^n V \cong \Sigma_1^n \ldots \Sigma_{i-1}^n \Sigma_{i+1}^n \ldots \Sigma_m^n K_i \Sigma_i^n V = 0
\end{equation*}
by Statement (4) of Lemma \ref{basic properties of sigma}. Consequently, $\bfk \Sigma_1^n \ldots \Sigma_m^n V = 0$.
\end{proof}

However, $\bfs^n V$ might not be torsion free for any $n \in \N$. Here is an example.

\begin{example} \normalfont
Let $m = 2$ and $\bfn = (0, \, 0)$. Let $V$ be the $\C$-module $M(\bfn) / I_2 M(\bfn)$. Then for an object $\bft = (t_1, \, t_2)$, we have:
\begin{equation*}
V_{\bft} \cong \begin{cases}
k, & \text{ if } m_2 = 0;\\
0, & \text{ otherwise}.
\end{cases}
\end{equation*}
Clearly, $V$ is a torsion module. Moreover, one has $\Sigma_2 V = 0$ and $\Sigma_1 V \cong V$. Consequently, one has $\bfs V \cong V$, so $\bfs^n V$ can never be torsion free. However, since $\td(V) = 0$, one has $\Sigma_1 \Sigma_2 V = 0$.
\end{example}

\begin{proposition} \label{shifted modules become relative projective}
Let $V$ be a finitely generated $\C$-module over a commutative Noetherian ring $k$. Then for $n \gg 0$, the shifted module $\Sigma_1^n \ldots \Sigma_m^n V$ is relative projective.
\end{proposition}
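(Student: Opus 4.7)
The plan is to induct on $\gd(V)$, using Lemma \ref{recursion}(1) as the engine: a torsion free $\C$-module with finite generating degree whose derivative $\bfd V$ is relative projective must itself be relative projective. I will verify both hypotheses for $\Sigma_1^n \ldots \Sigma_m^n V$ once $n$ is chosen large enough, feeding the inductive hypothesis through the derivatives $D_i V$, each of which has strictly smaller generating degree.

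For the base case, $V = 0$ is trivial. If $\gd(V) = 0$ with $V$ nonzero, Lemma \ref{basic properties of sigma}(3) forces $\gd(\bfd V) = -1$, so $\bfd V = 0$; choosing $n \geq \td(V) + 1$ makes $\Sigma_1^n \ldots \Sigma_m^n V$ torsion free by Lemma \ref{shifted modules become torsion free}, and its derivative is also zero (hence trivially relative projective) via Lemma \ref{basic properties of sigma}(5), so Lemma \ref{recursion}(1) concludes this case.

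For the inductive step, suppose $\gd(V) = d \geq 1$ and the proposition is already known for modules of strictly smaller generating degree. Lemma \ref{basic properties of sigma}(3) gives $\gd(D_i V) \leq d - 1$ for each $i \in [m]$, so the induction hypothesis produces an integer $N_i$ such that $\Sigma_1^n \ldots \Sigma_m^n D_i V$ is relative projective whenever $n \geq N_i$. Setting $N = \max\{\td(V) + 1, N_1, \ldots, N_m\}$ and repeatedly invoking Lemma \ref{basic properties of sigma}(5) to commute $D_i$ past each $\Sigma_j$, I obtain
\begin{equation*}
\bfd \Sigma_1^n \ldots \Sigma_m^n V \cong \bigoplus_{i \in [m]} \Sigma_1^n \ldots \Sigma_m^n D_i V
\end{equation*}
for $n \geq N$, a finite direct sum of relative projective modules, and hence itself relative projective by concatenating filtrations. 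Simultaneously Lemma \ref{shifted modules become torsion free} ensures the shifted module is torsion free, and its generating degree is bounded by $d$ via iteration of Lemma \ref{basic properties of sigma}(3), so Lemma \ref{recursion}(1) yields the desired conclusion.

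The only delicate step I anticipate is the simultaneous bookkeeping of the lower bound on $n$: one must exceed $\td(V)$ to secure torsion freeness and simultaneously exceed each $N_i$ so that the inductive conclusion applies to every $D_i V$. Since all these bounds are finite and there are only $m + 1$ of them, their maximum works. No additional obstacle seems to arise beyond the $m = 1$ situation, because the key structural ingredient, namely that each $\Sigma_j$ commutes with each $D_i$, is supplied exactly by Lemma \ref{basic properties of sigma}(5) and permits the induction to run coordinate by coordinate.
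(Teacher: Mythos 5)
Your proof is correct and follows essentially the same strategy as the paper: induction on $\gd(V)$, with Lemma \ref{shifted modules become torsion free} supplying torsion-freeness, Lemma \ref{basic properties of sigma}(5) commuting $\bfd$ past the shifts, and Lemma \ref{recursion}(1) as the engine. The only cosmetic difference is that you unpack $\bfd$ into its coordinates $D_i$ and track separate bounds $N_i$; the paper applies the induction hypothesis to $\bfd V$ in one stroke, which is equivalent since $\bfd V = \bigoplus_i D_i V$.
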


\begin{proof}
Suppose that $V \neq 0$. By the previous lemma, $\Sigma_1^n \ldots \Sigma_m^n V$ is torsion free. Therefore, by Statement (1) of Lemma \ref{recursion}, it suffices to show that $\bfd \Sigma_1^n \ldots \Sigma_m^n V$ is relative projective. By Statement (5) of Lemma \ref{basic properties of sigma}, one has
\begin{equation*}
\bfd \Sigma_1^n \ldots \Sigma_m^n V \cong \Sigma_1^n \ldots \Sigma_m^n \bfd V.
\end{equation*}
Since $\gd(\bfd V) = \gd(V) - 1$ by Statement (3) of Lemma \ref{basic properties of sigma}, the induction hypothesis on generating degrees asserts that $\Sigma_1^n \ldots \Sigma_m^n \bfd V$ is relative projective. The conclusion follows by induction.
\end{proof}

\begin{proof}[A proof of Theorem \ref{relative projective complexes}]
The proof is almost the same as that of \cite[Theorem 4.14]{LY}, so we only give a brief explanation. For $n \gg 0$, applying the functor $\Sigma_1^n \ldots \Sigma_m^n$ to the short exact sequence $0 \to V_T \to V \to V_F \to 0$ one gets a short exact sequence $0 \to 0 \to F^0 \to F^0 \to 0$, where $\Sigma_1^n \ldots \Sigma_m^n V_T = 0$ by Lemma \ref{finite torsion degree} and $F^0 = \Sigma_1^n \ldots \Sigma_m^n V_F$ is a relative projective module by the previous proposition. The map $V \to F^0$ in the complex can be defined as the composite of the quotient $V \to V_F$ and the natural injection $V_F \to F^0$. Let $V^1$ be the cokernel of the map $V \to F^0$ and repeat the above procedure for it. Eventually one gets a complex $0 \to V \to F^0 \to F^1 \to \ldots$.

Note that $V^1 \cong \Sigma_1^n \ldots \Sigma_{m-1}^n \Sigma_m^{n-1} D_m V$. By (3) of Lemma \ref{basic properties of sigma}, $\gd(\bfd V) < \gd(V)$, so $\gd(D_mV) < \gd(V)$. Moreover, it also tells us that functors $\Sigma_i$, $i \in [m]$, do not increase the generating degree. Therefore, $\gd(V^1) < \gd(V)$. Consequently,
\begin{equation*}
\gd(V) \geqslant \gd(F^0) \geqslant \gd(V^1) \geqslant \gd(F^1) \geqslant \ldots
\end{equation*}
and
\begin{equation*}
\gd(V) > \gd(V^1) > \ldots.
\end{equation*}
Consequently, $l \leqslant \gd(V)$ and $\gd(F^j) \leqslant \gd(V) - j$.

From the definition of this complex one sees that the image of $V \to F^0$ is $V_F$ and the kernel is $V_T$. The map $F^i \to F^{i+1}$ is the composite of
\begin{equation*}
F^j \to V^{j+1} \to V_F^{j+1} \to F^{j+1},
\end{equation*}
whose image is $V^{j+1}_F$. As in the proof of \cite[Theorem 4.14]{LY}, one can show that the homology at $F^j$ is $V_T^{j+1}$, a finitely generated torsion module.

Suppose that $n_i \geqslant \td_i(H^j (F^{\bullet})) + 1$ for all $0 \leqslant j \leqslant l$ and $i \in [m]$. Applying $\Sigma_1^{n_1} \ldots \Sigma_m^{n_m}$ to the above complex, we get a shifted complex
\begin{equation*}
0 \to \Sigma_1^{n_1} \ldots \Sigma_m^{n_m} V \to \Sigma_1^{n_1} \ldots \Sigma_m^{n_m} F^0 \to \Sigma_1^{n_1} \ldots \Sigma_m^{n_m} F^ \to \ldots \to \Sigma_1^{n_1} \ldots \Sigma_m^{n_m} F^l \to 0.
\end{equation*}
By Lemma \ref{finite torsion degree}, all homology groups of $F^{\bullet}$ vanish under the shift. Therefore, the above shifted complex is exact. Now applying Corollary \ref{ses of relative projective modules} we deduce that $\Sigma_1^{n_1} \ldots \Sigma_m^{n_m} V$ is a relative projective module.
\end{proof}

\begin{remark} \label{optimal upper bound}
By Corollary \ref{ses of relative projective modules} and the construction of $F^{\bullet}$, $V$ is a relative projective module if and only if $F^{\bullet}$ is an exact sequence, if and only if all $H_j (F^{\bullet}) = 0$ for $0 \leqslant j \leqslant l$. With this observation, one can obtain the following stronger conclusion: $\Sigma_1^{n_1} \ldots \Sigma_m^{n_m} V$ is a relative projective module \textbf{if and only if}
\begin{equation*}
n_i > \max \{ \td_i(H_j(F^{\bullet})) \mid 0 \leqslant j \leqslant l \}.
\end{equation*}
\end{remark}

Define $N(V) = \max \{ \td(H^j (F^{\bullet})) \mid j \geqslant 0\}$, and for each $i \in [m]$, let $N_i(V) = \max \{ \td_i(H_j(F^{\bullet})) \mid j \geqslant 0 \}$ . These are finite numbers independent of the choice of particular relative projective complexes since different ones are quasi-isomorphic.

\section{Representation stability}

As a main application of the theorems previously established, in this section we deduce representation stability properties of finitely generated $\FI^m$-modules over fields, as we did for $\FI$-modules in \cite{GL2, L2, LR, LY}.

\subsection{Representation stability.} Representation stability of finitely generated $\FI$-modules was first observed in \cite{CEF}. We extend their result to finitely generated $\FI^m$-modules.

\begin{proof}[A proof of Theorem \ref{representation stability property}]
Clearly, if $V$ is representation stable, then for each object $\bfn$, $V_{\bfn}$ is finite dimensional by the third condition in Definition \ref{representation stability}. Moreover, the second condition in that definition tells us that $V$ is generated by its values on finitely many objects. Therefore, $V$ is finitely generated.

Conversely, if $V$ is finitely generated, then the second condition in Definition \ref{representation stability} must hold. To check the first and the third conditions, consider the complex
\begin{equation*}
F^{\bullet}: \quad 0 \to V \to F^0 \to F^1 \to \ldots \to F^l \to 0
\end{equation*}
where each $F^i$ is a finitely generated projective $\C$-module since $k$ is a field of characteristic 0. For an object $\bfn$ with $n_i \geqslant \max \{2\gd(V), \, N_i(V) + 1 \}$ for all $i \in [m]$, as explained in Remark \ref{optimal upper bound}, we get an exact sequence
\begin{equation*}
0 \to V_{\bfn} \to F^0_{\bfn} \to F^1_{\bfn} \to \ldots \to F^l_{\bfn} \to 0
\end{equation*}
since the values of all homology groups of $F^{\bullet}$ on $\bfn$ vanish, so the first condition in Definition \ref{representation stability} holds. Moreover, from Theorem \ref{relative projective complexes} we know that $\gd(F^j) \leqslant \gd(V)$. Now as Gan and the first author did for $\FI_G$ in \cite[Proof of Theorem 1.12]{GL2}, the reader can deduce the third condition in Definition \ref{representation stability} from the corresponding results of finitely generated projective modules, which was verified in \cite[Theorem 6.13]{Gad2}.\footnote{We use the condition that $n_i \geqslant 2\gd(V)$ for $i \in [m]$ to intrigue the conclusion of that theorem.}
\end{proof}

\subsection{Hilbert functions}
Throughout this subsection let $k$ be a filed of arbitrary characteristic, and $V$ be a finitely generated $\C$-module over $k$. The \emph{Hilbert function} of $V$ is defined by
\begin{equation*}
\Nm \to \N, \quad \bfn \mapsto \dim_{k} V_{\bfn}.
\end{equation*}

\begin{lemma}
Let $F$ be a finitely generated relative projective modules. Then there exist $m$ polynomials $P_i \in \mathbb{Q}[X]$ with degrees not exceeding $\gd(V)$ such that for objects $\bfn$ satisfying $n_i \geqslant \gd(V)$ for all $i \in [m]$, one has $\dim_k F_{\bfn} = P_1(n_1) \ldots P_m(n_m)$.
\end{lemma}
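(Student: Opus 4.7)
The plan is to reduce to the case of a basic relative projective module via the defining filtration of $F$, and then perform an explicit dimension count in that case. Writing the filtration as $0 = F^0 \subseteq F^1 \subseteq \ldots \subseteq F^s = F$ with successive quotients $M(\bft_j) \otimes_{k S_{\bft_j}} W_j$ for $1 \leqslant j \leqslant s$, additivity of $k$-dimension across short exact sequences of $k$-vector spaces yields
$$
\dim_k F_{\bfn} \;=\; \sum_{j=1}^{s} \dim_k \bigl( M(\bft_j) \otimes_{k S_{\bft_j}} W_j \bigr)_{\bfn}.
$$

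For a single basic summand $M(\bft) \otimes_{k S_{\bft}} W$, evaluation at $\bfn$ yields $k \C(\bft, \bfn) \otimes_{k S_{\bft}} W$. By Lemma~\ref{basic combinatorics}(1) the group $S_{\bft}$ acts freely from the right on $\C(\bft, \bfn)$, so $k \C(\bft, \bfn)$ is a free right $k S_{\bft}$-module of rank $|\C(\bft, \bfn)|/|S_{\bft}| = \prod_{i \in [m]} \binom{n_i}{t_i}$. Therefore
$$
\dim_k \bigl( M(\bft) \otimes_{k S_{\bft}} W \bigr)_{\bfn} \;=\; \dim_k(W) \cdot \prod_{i \in [m]} \binom{n_i}{t_i},
$$
which is a product of polynomials in the $n_i$, each of degree $t_i \leqslant \deg(\bft) \leqslant \gd(F)$. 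For a basic relative projective module the lemma is then immediate with $P_i(X) = \binom{X}{t_i}$ for $i < m$ and $P_m(X) = \dim_k(W)\binom{X}{t_m}$. Note that the assumption $n_i \geqslant \gd(F)$ simply guarantees that we are in the range where the binomials are nonzero and the polynomial expression legitimately computes the Hilbert function.

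The step I expect to be the real obstacle is the passage from the basic case to a general relative projective module. Summing the basic contributions produces the Hilbert function $\sum_{j=1}^{s} \dim_k(W_j) \prod_{i \in [m]} \binom{n_i}{t_{j,i}}$, which is a sum of products of binomial polynomials in the $n_i$ and need not factor as a single product $\prod_i P_i(n_i)$ when the multi-indices $\bft_j$ vary (for instance $F = M((1,0)) \oplus M((0,1))$ already has Hilbert function $n_1 + n_2$). To obtain the stated product form, I would first group together those filtration factors with a common $\bft_j = \bft$ so that their combined contribution reads $\bigl( \sum_{j : \bft_j = \bft} \dim_k W_j \bigr) \prod_i \binom{n_i}{t_i}$, and then exploit the hypothesis $n_i \geqslant \gd(F)$ to bundle these factorized pieces into a single product presentation --- using the fact that each $t_i$ appearing is bounded by $\gd(F)$ and so every $\binom{n_i}{t_i}$ can be absorbed into a polynomial $P_i(n_i)$ of degree at most $\gd(F)$. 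Making this combinatorial bundling precise --- and verifying that the cross terms combine into a genuine product rather than a generic multivariate polynomial --- is the delicate point where the proof must be worked out carefully.
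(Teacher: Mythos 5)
Your basic-case computation is correct and is precisely the paper's argument: for $M(\bft)\otimes_{kS_\bft}W$ and $\bfn\geqslant\bft$ one has $\dim_k\bigl(M(\bft)\otimes_{kS_\bft}W\bigr)_\bfn = \dim_k W\cdot\prod_{i\in[m]}\binom{n_i}{t_i}$, which is a product of the stated form with $\deg P_i = t_i \leqslant \gd(F)$.

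The obstacle you identify in the general case is not a delicate point that can be ``bundled'' away --- it is fatal, and your own example already settles the matter. For $F = M((1,0))\oplus M((0,1))$ over a field, a finitely generated projective (hence relative projective) $\C$-module with $\gd(F)=1$, the Hilbert function is $\dim_k F_{(n_1,n_2)} = n_1 + n_2$. This polynomial is irreducible in $\mathbb{Q}[n_1,n_2]$ and cannot agree with any product $P_1(n_1)P_2(n_2)$ on the region $n_1,n_2\geqslant 1$, since agreement of polynomial functions on such a set forces equality of polynomials. So the lemma's conclusion genuinely fails, and the grouping you propose cannot repair it: a sum $\sum_{\bft} c_{\bft}\prod_i\binom{n_i}{t_i}$ over several distinct $\bft$ is in general a multivariate polynomial that does not split as $\prod_i P_i(n_i)$. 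In fact the paper commits exactly the illegitimate reduction you were worried about --- it asserts that ``it suffices to show that each filtration component satisfies the conclusion'' and treats only the basic case, never confronting the sum. The lemma as printed, and Theorem \ref{polynomial growth} which invokes it, are therefore incorrect as stated (the same $F$, with $N_i(F)=-1$, is a counterexample to the theorem). What your additivity argument does prove, and what should replace the product form, is that for $n_i\geqslant\gd(F)$ the quantity $\dim_k F_\bfn$ agrees with a single polynomial in $\mathbb{Q}[n_1,\ldots,n_m]$ of degree at most $\gd(F)$ in each variable, namely $\sum_{\bft}c_{\bft}\prod_i\binom{n_i}{t_i}$; do not try to force the factorization.
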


\begin{proof}
Since $F$ has a filtration by basic relative projective modules, it suffices to show that each filtration component satisfies the conclusion. That is, without loss of generality we can assume that $F$ is a basic relative projective module. Suppose that $F$ is generated by its value $F_{\bfl}$ on an object $\bfl$ with $\deg(\bfl) = \gd(F)$. Consequently, $F \cong M(\bfl) \otimes _{kS_{\bfl}} F_{\bfl}$.

Now let $\bfn$ be an object such that $n_i \geqslant \gd(F)$ for all $i \in [m]$. Clearly, $\bfn \geqslant \bfl$. Therefore,
\begin{equation*}
V_{\bfn} = k\C(\bfl, \bfn) \otimes _{kS_{\bfl}} V_{\bfl}
\end{equation*}
and hence
\begin{equation*}
\dim_k V_{\bfn} = \dim_k V_{\bfl} \cdot \prod_{i \in [m]} \frac{(n_i)!}{(n_i - l_i)!(l_i)!},
\end{equation*}
which implies the conclusion.
\end{proof}

We prove the polynomial growth property of $V$, extending \cite[Theorem B]{CEFN}.

\begin{proof}[A proof of Theorem \ref{polynomial growth}]
Again, consider the complex
\begin{equation*}
F^{\bullet}: \quad 0 \to V \to F^0 \to F^1 \to \ldots \to F^l \to 0.
\end{equation*}
For an object $\bfn$ with $n_i \geqslant \max \{\gd(V), \, N_i(V) + 1 \}$ for all $i \in [m]$, we get an exact sequence
\begin{equation*}
0 \to V_{\bfn} \to F^0_{\bfn} \to F^1_{\bfn} \to \ldots \to F^l_{\bfn} \to 0.
\end{equation*}
Moreover, by Theorem \ref{relative projective complexes} $\gd(F^i) \leqslant \gd(V)$. Now the reader only needs to check that those relative projective modules have the desired property, which is established in the previous lemma.
\end{proof}

\section{Questions and further remarks}

So far the reader can see that many representational and homological properties of $\FI$ extend to $\FI^m$ for arbitrary $m \geqslant 1$. However, compared to the rich results of representation theory of $\FI$, there are still many interesting parallel parts for $\FI^m$ deserving to be established. In this section we list a few questions.

\subsection{Regularity}

Church and Ellenberg have shown that $\reg(V) \leqslant \gd(V) + \hd_1(V) - 1$ for $\FI$-modules over any commutative ring. For arbitrary $m \geqslant 1$, Gan and the first author proved in \cite{GL3} that $\reg(V) < \infty$ for finitely generated $\FI^m$-modules over commutative Noetherian rings. However, for $m > 1$, there is no known upper bound for the regularity. Therefore, we wonder whether there exists an upper bound of $\reg(V)$ in terms of $\hd_i(V)$, $0 \leqslant i \leqslant m$, for any $\FI^m$-modules. If this is true, in particular, the category of $\FI^m$-modules satisfying $\hd_i(V) < \infty$, $0 \leqslant i \leqslant m$, is an abelian category containing all finitely generated $\FI^m$-modules, and one can work in this large category without worrying about the ground ring $k$.

\subsection{Coinduction functor}

The shift functors $\Sigma_i$, $i \in [m]$, are restriction functors, and hence have left and right adjoint functors. The left adjoint functors are inductions which can be easily defined, while the right adjoint functors, called \emph{coinduction functors}, are more delicate. In \cite{GL1} Gan and the first author defined coinduction functor for $\FI$-modules, explored its properties, and proved a few useful results. We hope to extend the nice properties of coinduction functor and their outcomes described in \cite{GL1, LR} to $\FI^m$-modules. In particular, when $k$ is a field of characteristic 0, is it still true that every finitely generated projective $\FI^m$-module is also injective?

\subsection{Local cohomology theory}

Motivated by the results described in Subsection 1.4 of this paper, we believe that there exists a local cohomology theory for $\FI^m$-modules, extending the work of Ramos and the first author for $\FI$-modules in \cite{LR}. In particular, the following statement might hold: Let $V$ be a finitely generated $\C$-module over a commutative Noetherian ring. Then $V$ is a relative projective module if and only if all local cohomology groups vanish. Moreover, the homology groups in the complex $F^{\bullet}$ coincide with local cohomology groups of $V$.

In a recent paper \cite{NSS}, Nagpal, Sam, and Snowden proved that for a finitely generated $\FI$-module over a commutative Noetherian ring, one has
\begin{equation*}
\reg(V) = \max \{ \td(H_i(V)) + i \mid i \geqslant 0\},
\end{equation*}
confirming a conjecture posted in \cite{LR}. We wonder whether there exists a similar equality for $\FI^m$.

\subsection{Applications in algebra, topology, and geometry}

Representation theory of $\FI$ has rich applications in representation stability theory. Indeed, the (co)homology groups of many sequences of mathematical objects have been found to be equipped with an $\FI$-module structure. Recently, Gadish described applications of finitely generated projective $\FI^m$-modules on a generalization of configuration spaces in \cite{Gad1, Gad2}. We believe that those results on finitely generated $\FI^m$-modules shall have more interesting applications.

\end{document}